\numberwithin{equation}{section}
\theoremstyle{plain}
\newtheorem{thm}{Theorem}[section]
\newtheorem{theorem}[thm]{Theorem}
\newtheorem{lemma}[thm]{Lemma}
\newtheorem{lem}[thm]{Lemma}
\newtheorem{cor}[thm]{Corollary}
\newtheorem{proposition}[thm]{Proposition}
\newtheorem{pro}[thm]{Proposition}
\theoremstyle{definition}
\newtheorem{remark}[thm]{Remark}
\newtheorem{claim}[thm]{Claim}
\newtheorem{s:examples}[thm]{s:examples}
\newtheorem{conjecture}[thm]{Conjecture}
\newtheorem{question}[thm]{Question}
\numberwithin{equation}{section}
\newcommand{\ga}[2]{\begin{gather}\label{#1}#2 \end{gather}}
\newcommand{\sO}{{\mathcal O}}
\newcommand{\sZ}{{\mathcal Z}}
\newcommand{\C}{{\mathbb C}}
\renewcommand{\P}{{\mathbb P}}
\newcommand{\Q}{{\mathbb Q}}
\newcommand{\R}{{\mathbb R}}
\newcommand{\Z}{{\mathbb Z}}
\newcommand{\oQ}{{\overline{\mathbb Q}}}
\newcommand{\obfx}{{{\rm Orb}_f(x)}}
\newcommand{\Aut}{{\rm Aut\hspace{.1ex}}}
\newcommand{\Bir}{{\rm Bir\hspace{.1ex}}}
\newcommand{\Rat}{{\rm Rat\hspace{.1ex}}}
\newcommand{\End}{{\rm End\hspace{.1ex}}}
\newcommand{\Al}{{\rm Alb\hspace{.1ex}}}
\newcommand{\alb}{{\rm alb}}
\newcommand{\ssec}{\subsection}
\newcommand{\ol}{\overline}
\newcommand{\ti}[1]{\tilde{#1}}
\newcommand{\ul}{\underline}
\newcommand{\vast}{\bBigg@{4}}
\newcommand{\Vast}{\bBigg@{5}}
\newcommand{\wt}{\widetilde}
\newcommand{\bk}{\mathbf{k}}
\newcommand{\cS}{\mathcal{S}}
\newcommand{\cZ}{\mathcal{Z}}
\newcommand{\Qbar}{\ol{\Q}}
\newcommand{\gS}{\Sigma}
\renewcommand{\ga}{\alpha}
\newcommand{\gd}{\delta}
\newcommand{\gk}{\kappa}
\newcommand{\gl}{\lambda}
\newcommand{\Alb}{\mathrm{Alb}}
\newcommand{\Id}{\mathrm{Id}}
\newcommand{\Orb}{\mathrm{Orb}}
\newcommand{\pr}{\mathrm{pr}}
\newcommand{\SL}{\mathrm{SL}}
\newcommand{\bss}{\backslash}
\newcommand{\cnec}{\mathrel{:=}}
\newcommand{\cto}{\circlearrowleft}
\newcommand{\dto}{\dashrightarrow}
\newcommand{\xto}[1]{\xrightarrow{ #1 }}
\newcommand{\xlto}[1]{\xleftarrow{ #1 }}
\title [irregular varieties]{On the Kawaguchi--Silverman Conjecture for birational automorphisms of irregular varieties}
\dedicatory{In memory of the late Professor Nessim Sibony}
\author{Jungkai Alfred Chen}
\address{Department of Mathematics, National Taiwan University,
	No. 1, Sec. 4, Roosevelt Rd., Taipei 10617, Taiwan.}
\email{jkchen@ntu.edu.tw}
\author{Hsueh-Yung Lin}
\address{Department of Mathematics, National Taiwan University,
	No. 1, Sec. 4, Roosevelt Rd., Taipei 10617, Taiwan.}
\email{hsuehyunglin@ntu.edu.tw}
\author{Keiji Oguiso}
\address{Mathematical Sciences, the University of Tokyo, Meguro Komaba 3-8-1, Tokyo, Japan, and National Center for Theoretical Sciences, Mathematics Division, National Taiwan University,
Taipei, Taiwan}
\email{oguiso@ms.u-tokyo.ac.jp}
\thanks{JAC is partially supported by NSTC 110-2123-M-002-005 and by NCTS.
	HYL is partially supported by
	Taiwan Ministry of Education Yushan Young Scholar Fellowship (NTU-110VV006),
	and NCTS (110-2628-M-002-006-).
	KO is supported by JSPS Grant-in-Aid (A) 15H05738, JSPS Grant-in-Aid (B) 15H03611, and by NCTS Scholar Program.}
\subjclass[2010]{14J50, 14E07, 37P55}
\begin{document}

\begin{abstract} 
	We study the main open parts of the Kawaguchi--Silverman Conjecture, asserting that for a birational self-map $f$ of a smooth projective variety $X$ defined over $\oQ$, the arithmetic degree $\alpha_f(x)$ exists and coincides with the first dynamical degree $\delta_f$ for any $\Qbar$-point $x$ of $X$ with a Zariski dense orbit. Among other results, we show that this holds when $X$ has Kodaira dimension zero and irregularity $q(X) \ge \dim X -1$ or $X$ is an irregular threefold (modulo one possible exception). We also study the existence of Zariski dense orbits, with explicit examples.
\end{abstract}

\maketitle

\section{Introduction}

\ssec{Kawaguchi--Silverman Conjecture}
\hfill

Let $X$ be a smooth complex projective variety of dimension $d$, and let $f : X \dto X$ be a dominant rational self-map.
The \emph{first dynamical degree} of $f$, defined by the limit
\begin{equation}\label{eqn-defdd}
	\gd_f \cnec \lim_{n \to \infty} ((f^n)^*H \cdot H^{d-1})^{\frac{1}{n}},
\end{equation}
where $H$ is an ample divisor,
is a real number $\gd_f \ge 1$ which measures the complexity of $f$. We refer to \S\ref{notation} for the definition of the pullback of a divisor under a dominant rational map.
The existence of this limit is proved in the seminal work of Dinh and Sibony~\cite{DS05},
and is independent of the choice of $H$ (see also~\cite[Theorem 1.1]{Tr15} and~\cite[Theorem 1.1]{Tr20}).

Now, assume that the variety $X$ and the map $f : X \dto X$ are both defined over $\Qbar$.
We then have another way of measuring the complexity of $f$,
by considering the growth of the Weil heights of the iterations of $\Qbar$-points under $f$.
Such an arithmetic approach was
first conceived and studied by Silverman (\cite{Si91}, see also~\cite{CS93}),
but many fundamental questions,
including the existence of the so-called \emph{arithmetic dynamical degree},
still remain open.
A series of conjectural statements in this regard were formulated by
Kawaguchi and Silverman~\cite[Conjecture 6]{KS16b}, which we now recall.

Throughout the paper,
$\Rat(X)$ denotes the semi-group of rational dominant maps $f : X \dto X$ over its field of definition,
and $\Bir(X) \subset \Rat(X)$ the subgroup of birational maps.
For every $f \in \Rat(X)$,
let $U(f)$ be the largest Zariski open subset of $X$ such that
$f_{|U(f)}$ is a morphism,
and define
$$X_f(\oQ) := \{x \in X(\oQ)\,|\, f^n(x) \in U(f)\, \text{ for all } n \in \Z_{\ge 0}\}.$$
Let $H$ be an ample divisor on $X$ and let $h_H \ge 1$
be a height function associated to $H$ (see e.g. \cite[Part B, Theorem B.3.6]{HS00}).

\begin{conjecture}[Kawaguchi--Silverman Conjecture (KSC)]\label{conj1} Let $X$ be a smooth projective variety defined over $\Qbar$ and
	let $f \in \Rat(X)$.
	Then:
	\begin{enumerate}
		\item The limit
		$$a_f(x) := \lim_{n \to \infty} h_H(f^n(x))^{\frac{1}{n}}$$
		exists for any $x \in X_f(\oQ)$ and the value is an algebraic integer.
		
		\item For any $x \in X_f(\oQ)$ whose orbit
		$\obfx := \{f^n(x)\,|\, n \in \Z_{\ge 0}\}$
		under $f$ is Zariski dense in $X$, we have
		$$a_f(x) = \delta_f.$$
		
	\end{enumerate}

\end{conjecture}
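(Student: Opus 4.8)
\noindent\emph{Proof strategy (for the cases announced in the abstract).}
The conjecture is open in general, so I will sketch how to obtain it in the two situations above: $X$ of Kodaira dimension zero with $q(X) \ge \dim X - 1$, and $X$ an irregular threefold. In both the mechanism is the Albanese morphism, together with the known validity of KSC for abelian varieties.

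Begin with the ``easy half'', valid for all $X$ and all $f \in \Rat(X)$: one has $\limsup_{n \to \infty} h_H(f^n(x))^{1/n} \le \delta_f$ for every $x \in X_f(\oQ)$, by the Kawaguchi--Silverman estimate (the height machine applied to the comparison $(f^n)^* H \sim \delta_f^{\,n(1+o(1))} H$ in $N^1(X)_{\R}$ bounds $h_H \circ f^n$ by $C\,\delta_f^{\,n(1+o(1))}$). Hence for part (1) it remains to prove existence of the limit and its integrality, and for part (2) to prove the reverse bound $\liminf_{n \to \infty} h_H(f^n(x))^{1/n} \ge \delta_f$ when $\obfx$ is Zariski dense.

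Let $a_X \colon X \to A := \Alb(X)$ be the Albanese morphism. As $X$ is smooth projective, $a_X$ is a birational invariant and $f$ induces $g = t_b \circ h \colon A \to A$ with $h \in \End(A)$ and $t_b$ a translation, compatible with $a_X$ on the locus where $f$ is a morphism; functoriality of Weil heights makes $h_A(g^n(a_X(x)))$ comparable to $h_H(f^n(x))$ up to $O(1)$, so the arithmetic degree of $a_X(x)$ under $g$ bounds that of $x$ under $f$ from below (and, dually, from above). If $q(X) = \dim X$ then $X$ is birational to $A$ by Kawamata's theorem, so $\delta_f = \delta_g$, a Zariski dense $\obfx$ has Zariski dense image, and KSC for abelian varieties gives $\alpha_f(x) = \delta_g = \delta_f$. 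If $q(X) = \dim X - 1$ then $a_X$ is a fibration with one-dimensional general fibre, which $\kappa(X) = 0$ forces to be an elliptic curve with isotrivial fibration (possibly after an étale base change): $g$ is the induced base map, the vertical action is by automorphisms of bounded degree, hence again $\delta_f = \delta_g$, and one adds to KSC on $A$ the essentially trivial fibre dynamics, the vertical height contribution being controlled by the boundedness of that action.

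For an irregular threefold one runs an $f$-equivariant minimal model program (after replacing $f$ by a power so the relevant contracted or flipped loci are preserved) to reach a model on which $f$ is a pseudo-automorphism, and then stratifies by $\kappa(X) \in \{-\infty, 0, 1, 2, 3\}$: the Iitaka or Albanese fibration is $f$-equivariant, so one inducts on base and fibre, feeding $\kappa \ge 1$ into the known curve and surface cases and $\kappa = 0$ with $q \ge 2$ into the previous paragraph; the ``one possible exception'' is a residual $\kappa = 0$ configuration in which the equivariant fibration structure, or the comparison of $\delta_f$ with the dynamical degree of the induced base map, resists the argument. \textbf{The main obstacle} is exactly the reverse inequality in part (2) when $\obfx$ is dense in $X$ but its image in the Albanese or Iitaka base is not: one must then show that the vertical growth of heights along the fibration already realizes $\delta_f$, which requires sharp control of the fibrewise dynamics coming from the structure theory of Kodaira-dimension-zero varieties and of threefolds, together with the existence of the $f$-equivariant birational models needed to put $f$ in fibred form.
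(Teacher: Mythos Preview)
Note first that Conjecture~\ref{conj1} is not proved in the paper; only the special cases of Theorems~\ref{mainthm2} and~\ref{mainthm1} are established, and your sketch is aimed at those. I compare accordingly.

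For $\kappa(X)=0$, $q(X)=\dim X-1$, your route and the paper's diverge. You propose to show $\delta_f=\delta_g$ for the induced $g\in\Aut(A)$ and then transfer KSC~(2) from $A$. This does work, and is precisely Lemma~\ref{lem22}: the Albanese map is surjective with one-dimensional fibres by Kawamata, and $g$ is an automorphism. However, your justification is off. The equality $\delta_f=\delta_g$ follows from the relative product formula (Lemma~\ref{lem32}), not from ``vertical action by automorphisms of bounded degree'', and no ``vertical height contribution'' needs controlling. Your claim that $h_A(g^n(a_X(x)))$ and $h_H(f^n(x))$ are ``comparable up to $O(1)$'' is also false when $a_X$ has positive-dimensional fibres; only the one-sided bound $h_H\ge c\,h_{a_X^*H_A}+O(1)$ holds, which is the direction needed. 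The paper takes a different and stronger route: Proposition~\ref{pro-NZD} shows that \emph{no} $f\in\Bir(X)$ has a Zariski dense orbit, via the description of $X$ as birational to a quotient $(\wt A\times E)/G$ and the observation that $G$ cannot act on $E$ purely by translations (else $q(X)=\dim X$), which forces the $E$-component of the lifted dynamics to have finite order.

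For irregular threefolds, your sketch has real gaps. You misidentify the exception: it is not a ``residual $\kappa=0$ configuration'' but the $\kappa=-\infty$ case where $X$ is covered by rational surfaces (Theorem~\ref{mainthm1}, Proposition~\ref{prop33}); the $\kappa=-\infty$ analysis you omit is substantial and, for $f\in\Aut(X)$, relies on Lesieutre's classification of positive-entropy threefold automorphisms. You also skip $\kappa=0$, $q=1$ (Proposition~\ref{prop3bis2}), where the paper needs that the minimal model is smooth (Lemma~\ref{lem-k0min}) and that the indeterminacy locus of the map to it has finite Albanese image (Claim~\ref{claim-ndom}) before Lemma~\ref{lem21} can be applied. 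For $\kappa\ge 1$ the paper does not ``feed into known curve and surface cases'': it invokes Nakayama--Zhang (Theorem~\ref{thmNZ}) to get vacuity directly. Finally, your ``main obstacle'' is a non-issue: a Zariski dense orbit always has Zariski dense image in any equivariant base (Lemma~\ref{lem-dorb}). The genuine difficulty, as the paper emphasizes, is the lack of functoriality of heights under \emph{rational} (as opposed to regular) maps, which is circumvented case by case with the algebro-geometric input above.
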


\begin{remark}
		It is worth noticing that $\delta_f$ is always an algebraic integer
		when $f$ is a morphism (see e.g. \cite[Remark 35]{KS16b}),
		but there exist examples of transcendental
		$\delta_f$ when $f$ is not a morphism,
		by recent striking results~\cite{BDJ20} and \cite{BDJK21}.
		Therefore Conjecture~\ref{conj1} needs to be corrected,
		and very likely $a_f(x)$ could be a transcendental number as well.
	\end{remark}

	When the limit $a_f(x)$ in KSC (1) exists,
the value $a_f(x)$ is called the {\it arithmetic degree} of $f$ at $x$.
While KSC (1) is unknown,
we can always define
$$\overline{a}_f(x) := \limsup_{n \to \infty} h_H(f^n(x))^{\frac{1}{n}} \ge \underline{a}_f(x) := \liminf_{n \to \infty} h_H(f^n(x))^{\frac{1}{n}} \ge 1;$$
these are called the {\it upper arithmetic degree} and {\it lower arithmetic degree} of $f$ at
 $x \in X_f(\oQ)$ respectively.
It is known that $\overline{a}_f(x)$ and $\underline{a}_f(x)$
do not depend on the choice of an ample divisor $H$~\cite[Proposition 12]{KS16b}.

KSC (1) is affirmative when $f$ is a morphism~\cite[Theorem 3]{KS16a},
but very little is known in general, even when $f$ is birational.
KSC (2) is open even when $f$ is a morphism.
The only general result we know so far is that
	\begin{equation}\label{ineq-dalimsup}
		\delta_f \ge \overline{a}_f(x) 
	\end{equation}
	for any $f \in \Rat(X)$ and $x \in X_f(\oQ)$,
	due to Kawaguchi and Silverman~\cite[Theorem 4]{KS16a}
	and Matsuzawa~\cite[Theorem 1.4]{Ma20}.
	We also know that
	a $\Qbar$-point $x \in  X(\oQ)$ such that $a_f(x) = \delta_f$ exists
	when $f : X \to X$ is a surjective morphism,
	due to Matsuzawa, Sano and Shibata~\cite[Corollary 9.3]{MSS18}. 
	More remarkably, they prove that
	such $\Qbar$-points $x$ are Zariski dense in $X$.
	See e.g. \cite[Introduction]{LS21} for the current status of KSC (2).

\ssec{The existence of Zariski dense orbits}
\hfill

The statement KSC (2) is meaningful only when
there exists $x \in X_f(\oQ)$ such that $\obfx$ is Zariski dense in $X$,
otherwise we say that \emph{KSC (2) vacuously holds}.
The existence of such $x \in X_f(\oQ)$
seems to be less studied in the context of KSC.

\begin{question}\label{quest1} Let $X$ be a smooth projective variety over $\Qbar$
	and let $f \in \Rat(X)$ be a self-map of infinite order. 
	When does $f$ have Zariski dense $\Qbar$-orbits?
		Namely, when does
			$$\cZ(f)\cnec \Set{x \in X_f(\oQ) | \obfx \text{  is Zariski dense in } X} \ne \emptyset?$$
\end{question}

In Question~\ref{quest1}, we have an affirmative answer
if we replace the Zariski density of $\obfx$ by the weaker property that
$\obfx$ is infinite:
in~\cite{Am11},
Amerik even showed that the subset
$$\Set{x \in X_f(\oQ) | {|\obfx|} = \infty }$$
(and in particular $X_f(\oQ)$)
is Zariski dense in $X$.
When $X$ is a smooth projective surface and $f$ is birational with $\delta_f >1$,
the existence of Zariski dense orbit  is proved by Xie~\cite[Theorem 1.3]{Xi15} (See also \cite{Xi25}).

Directly related to Question~\ref{quest1}
is the so-called Zariski Dense Orbit conjecture,
which would provide 
a geometric characterization of $f \in \Rat(X)$ having Zariski dense $\Qbar$-orbits.

\begin{conjecture}[See e.g.{\cite[Conjecture 1.1]{JSXZ21}}]\label{ZDOconj} 
	Let $X$ be a smooth projective variety over $\Qbar$ and let $f \in \Rat(X)$ be a self-map of infinite order. Then $\cZ(f) \ne \emptyset$ 
	as long as there is no non-constant rational function
	$g$ on $X$ such that $g \circ f = g$. 
\end{conjecture}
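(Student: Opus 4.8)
Since any orbit $\obfx$ is contained in a fibre of an $f$-invariant rational function, the content of the conjecture is the converse implication, and the natural plan is not to exhibit a point with dense orbit directly but to show that an obstruction to density produces such an invariant function. I would begin with the by-now standard $v$-adic interpolation of orbits: spread $X$, $f$, and an ample $H$ out over a ring of $S$-integers, fix a non-archimedean place $v$ of good reduction, and, after replacing $f$ by a suitable iterate and passing to a model on which $f$ is a morphism near a well-chosen $v$-adic fixed point $x_0$ of the reduction (with the differential of $f$ at $x_0$ tame), apply the Bell--Ghioca--Tucker construction in its rational-map refinements due to Amerik and to Xie. This yields, for $x$ in a small $v$-adic ball $W \ni x_0$, a $v$-adic analytic map $\Phi_x \colon \Z_v \to X(\Q_v)$ with $\Phi_x(n) = f^n(x)$ for all $n \in \Z_{\ge 0}$. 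The key elementary point I would then invoke is this: if $\obfx$ lies in a hypersurface $Z = \{h = 0\}$, the analytic function $h \circ \Phi_x$ vanishes on $\Z_{\ge 0}$, hence identically, so $Z$ contains the whole analytic arc $\Phi_x(\Z_v)$ through $x$; consequently, if no hypersurface is eventually $f$-invariant, a very general $x \in W$ already has Zariski-dense orbit, which disposes of the codimension-one case.

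The difficulty is higher codimension. Varying $x$ over $W$ foliates $W$ by the arcs $\Phi_x(\Z_v)$, and if $\cZ(f) = \emptyset$ every such arc is contained in a proper $f$-periodic subvariety of $X$. I would run a Noetherian and constructibility argument in the spirit of Amerik--Campana to organize the minimal such subvarieties into an algebraic family, producing an $f$-equivariant dominant rational map $\pi \colon X \dto Y$ with $\dim Y < \dim X$ whose general fibre is $f$-periodic; after one further iterate each fibre is $f$-invariant. One then descends: by induction on dimension applied to the first-return dynamics on a general fibre of $\pi$ and to the induced self-map on $Y$, either some stage produces a genuinely dense orbit that can be lifted through $\pi$, or the process terminates in a base whose dimension forces a non-constant $f$-invariant rational function on $X$ --- the desired contradiction.

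In the irregular setting that concerns this paper, this descent has a canonical and essentially painless first step. If $q(X) > 0$, the Albanese morphism $\alb_X \colon X \to \Alb(X)$ is non-trivial; any rational map from $X$ to an abelian variety is a morphism, so $\alb_X \circ f$ extends and, by the universal property, equals $g \circ \alb_X$ for a self-map $g$ of $\Alb(X)$ which is a translate of a group homomorphism (an automorphism when $f$ is birational). On the abelian-variety side there is a clean dichotomy, via density results of Ghioca--Scanlon type: either some point of $\Alb(X)$ has $g$-dense orbit, and the problem reduces to a \emph{relative} Zariski-dense-orbit statement along the fibres of $\alb_X$ in one lower dimension; or no point does, in which case $g$ preserves a non-trivial quotient abelian variety, and pulling back along $\alb_X$ a coordinate on that quotient gives a non-constant $f$-invariant rational function on $X$. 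I expect the main obstacle to be exactly the organizing step in full generality --- controlling the family of small-orbit subvarieties of arbitrary codimension and splicing the lower-dimensional dense orbits back together through the fibration --- compounded by the purely birational nuisances: the indeterminacy loci of the $f^n$ grow with $n$, so that membership in $X_f(\oQ)$ and the validity of the $v$-adic arc construction both hinge on choosing the model and the place $v$ with care.
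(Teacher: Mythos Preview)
The statement you are attempting to prove is Conjecture~\ref{ZDOconj}, and the paper does \emph{not} prove it: it is stated as an open conjecture, with the explicit remark that ``we will not say much about Conjecture~\ref{ZDOconj} in this work.'' The paper only notes that it is known for abelian varieties (citing Ghioca--Scanlon) and, in Remark~\ref{rem-ZODqd-1}, verifies it in the very special case $\kappa(X)=0$, $q(X)=\dim X-1$ as a byproduct of the structure theory developed there. So there is no ``paper's own proof'' to compare against.

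Your write-up is accordingly not a proof but a strategy sketch, and you say so yourself (``I expect the main obstacle to be exactly the organizing step in full generality''). The ingredients you list --- $p$-adic interpolation \`a la Bell--Ghioca--Tucker/Amerik, organizing minimal invariant subvarieties into an algebraic family, and descent along an equivariant fibration --- are indeed the standard toolkit for partial results toward this conjecture, but the step where you ``run a Noetherian and constructibility argument \ldots\ to organize the minimal such subvarieties into an algebraic family, producing an $f$-equivariant dominant rational map $\pi$'' is precisely the point where all known approaches break down in general. There is no mechanism guaranteeing that the minimal $f$-periodic subvarieties through the various $x$ fit into a single algebraic family of bounded complexity, nor that the resulting $\pi$ is algebraic rather than merely $v$-adic analytic. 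Likewise, your inductive lifting step (``either some stage produces a genuinely dense orbit that can be lifted through $\pi$'') hides a relative version of the same conjecture, which is not obviously easier. In short: your outline is a fair summary of why the conjecture is plausible and how the known cases go, but it is not a proof, and the paper makes no claim to have one either.
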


Conjecture~\ref{ZDOconj} is known to hold for abelian varieties~\cite{GS17}. 
We will not say much about Conjecture~\ref{ZDOconj} in this work (apart from Remark~\ref{rem-ZODqd-1});
for a more general formulation, history and the current status of Conjecture~\ref{ZDOconj}
see e.g.~\cite[Introduction]{JSXZ21}.

In the negative direction,
a result due to Nakayama and Zhang (see Theorem~\ref{thmNZ}) implies that
when $\kappa (X) \ge 1$, a dominant rational self-map $f: X \dto X$ never has Zariski dense orbits in $X$.

\ssec{Main results}
\hfill

The aim of this paper is to study Conjecture~\ref{conj1} (2) and Question~\ref{quest1} for
{\it irregular} smooth projective varieties $X$ over $\Qbar$,
namely $X$ such that $q(X) := \dim H^1(X, \sO_X) > 0$.
We will prove the following three theorems.

\begin{theorem}\label{mainthm2}
	Let $X$ be a smooth projective variety with Kodaira dimension $\gk(X) = 0$ and irregularity $q(X) \ge \dim X -1$. Then
	KSC (2) holds for $f \in \Bir (X)$.
More precisely,
	
	\begin{enumerate}
		\item If $q(X) = \dim X$, then KSC (2) holds for all $f \in \Rat (X)$.
		\item If $q(X) = \dim X -1$, then
		KSC (2) vacuously holds for all $f \in \Bir (X)$.
	Namely, $\cZ(f)$ is always empty.
	\end{enumerate}
\end{theorem}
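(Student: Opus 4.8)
The plan is to reduce both statements, through the Albanese map, to the dynamics of endomorphisms of abelian varieties, for which the Kawaguchi--Silverman Conjecture is already known.

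\emph{Statement (1).} By Kawamata's theorem, a smooth projective variety of Kodaira dimension zero and maximal Albanese dimension is birational to its Albanese variety. Hence when $q(X)=\dim X$ the Albanese map yields a birational map $\beta\colon X\dashrightarrow A:=\Alb(X)$. The first dynamical degree, the upper and lower arithmetic degrees, and the existence of a Zariski dense orbit are all invariant under birational conjugation, so it suffices to prove KSC~(2) for $g:=\beta\circ f\circ\beta^{-1}\in\Rat(A)$. By the rigidity of rational maps into abelian varieties $g$ is a morphism, and being dominant with $A$ proper it is surjective; thus $g=t_c\circ\psi$ for a surjective endomorphism (isogeny) $\psi$ of $A$ and a translation $t_c$. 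Since KSC --- including KSC~(1) --- is known for surjective endomorphisms of abelian varieties, this settles (1). The only delicate point is to make precise the sense in which KSC~(2) transports through $\beta$, matching orbits and arithmetic degrees while tracking indeterminacy loci; this is routine.

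\emph{Statement (2).} Let $a_X\colon X\to A:=\Alb(X)$, which is surjective with connected fibres by Kawamata's theorem. Writing $n:=\dim X$, we have $\dim A=n-1$, so the general fibre $F$ of $a_X$ is a smooth curve. If $F\cong\P^1$ then $X$ is uniruled and $\kappa(X)=-\infty$; if $F$ has genus at least $2$ then $\kappa(X)\ge\kappa(F)+\kappa(A)=1$ by the subadditivity of Kodaira dimension for fibre spaces ($C_{n,n-1}$, which is a theorem). Both contradict $\kappa(X)=0$, so $F$ is an elliptic curve and $a_X$ is a genus-one fibration. The crucial --- and I expect hardest --- input is that $X$ is then of generalized hyperelliptic type: up to a birational modification, $X$ is an étale quotient $T/H$ of an abelian variety $T$ of dimension $n$ by a finite group $H$ which acts on a one-dimensional ``direction'' with nontrivial linear part, so that besides $a_X$ there is a second fibration $p\colon X\to\P^1$ whose general fibre is an $(n-1)$-dimensional abelian variety and which has at least three multiple fibres. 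I would derive this from the isotriviality of $a_X$ --- which should follow from the canonical bundle formula together with $\kappa(X)=\kappa(A)=0$, forcing both the discriminant part and the moduli part of the formula to be numerically trivial --- and from the analysis of finite group quotients of abelian varieties with Kodaira dimension zero; the map $p$ is the quotient by $\Aut^0(X)$, an abelian variety of dimension $n-1$ whose orbits are precisely the fibres of $p$.

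Granting this structure, suppose for contradiction that some $f\in\Bir(X)$ has a Zariski dense orbit. Replacing $X$ by its minimal model (which for these varieties is unique, with $\Bir(X)=\Aut(X)$), we may assume $f\in\Aut(X)$. Then $f$ preserves $p$, because $p$ is the quotient map by the characteristic subgroup $\Aut^0(X)$; hence $f$ induces an automorphism $\bar f$ of $\P^1$. This $\bar f$ permutes the at least three points of $\P^1$ underlying the multiple fibres of $p$, so $\bar f$ has finite order. Replacing $f$ by a power $f^m$ with $\bar f^m=\id$, the self-map $f^m$ preserves every fibre of $p$, whence $\Orb_{f^m}(x)\subseteq p^{-1}(p(x))$, a closed subvariety of dimension $n-1<n$; so $\Orb_{f^m}(x)$ is not Zariski dense, and neither is $\Orb_f(x)=\bigcup_{i=0}^{m-1}f^i(\Orb_{f^m}(x))$. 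This contradiction shows $\cZ(f)=\emptyset$ for every $f\in\Bir(X)$, which proves (2) and in particular that KSC~(2) holds vacuously here.
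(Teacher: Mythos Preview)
For part~(1), your argument coincides with the paper's: the Albanese map is a birational morphism onto the abelian variety $A$, the induced self-map is a genuine endomorphism of $A$ (since $A$ has no rational curves), and one transports KSC~(2) back via the paper's Lemma~\ref{lem21}, which is precisely the ``routine'' transfer you allude to.

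For part~(2), your route differs from the paper's, though both rest on the same structural input. You correctly anticipate, but do not prove, that a minimal model of $X$ has the form $X' = (\tilde A \times E)/G$ with $\tilde A \to A$ finite \'etale Galois of group $G$ and $G$ acting on the elliptic factor $E$ with a nontrivial linear part; the paper obtains this by quoting Viehweg~\cite[Corollary~9.4]{Vi81} for isotriviality and Levine~\cite[Proposition~1]{Le82} for the bundle decomposition, rather than via the canonical bundle formula as you suggest. From there the arguments diverge. The paper \emph{lifts}: it iterates $f$ so that the induced $f' \in \Aut(X')$ lifts to $\tilde f \in \Aut(\tilde A \times E)$ commuting with $G$, and then a short direct computation (Lemma~\ref{lem-prodE}) shows that commutation with a non-translation of $E$ forces $\tilde f$ to act on the $E$-factor by a finite-order map, whence no dense orbit. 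You instead \emph{descend} via the second projection $p : X' \to E/G \simeq \P^1$, argue that $p$ is the $\Aut^0(X')$-quotient (hence $f'$-equivariant), and observe that the induced $\bar f \in \Aut(\P^1)$ permutes the $\ge 3$ points carrying multiple fibres, so is of finite order. Your approach is valid, and the paper in fact records the map to $\P^1$ as an alternative in Remark~\ref{rem-ZODqd-1}. Your version gives a cleaner geometric picture, parallel to the classical treatment of bielliptic surfaces; the paper's computation is more self-contained, avoiding both the identification of $\Aut^0(X')$ with the fibres of $p$ (which needs a genuine argument: you must rule out a positive-dimensional family of automorphisms moving the $E$-direction, and this uses precisely that $G$ contains a non-translation on $E$) and the branch-point count for $E \to E/G$.
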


For the statement, we recall that if $\kappa (X) = 0$, then $q (X) \le \dim X$ and the equality holds exactly when $X$ is birational to an abelian variety by a fundamental result of Kawamata~\cite[Theorem 1]{Ka81}. Note that KSC (2) holds for $g \in \End_{{\rm var}}(A)$ of an abelian variety $A$ (\cite[Theorem 4]{KS16a} and \cite[Theorem 1.2]{Si17}).

\begin{theorem}\label{mainthm1}
	Let $X$ be a smooth projective variety and $f \in \Bir (X)$.
	Assume that $\dim\, (X) = 3$ and $q(X) > 0$. Then:
	
	\begin{enumerate}
		
		\item KSC (2) 
		is affirmative for $(X, f)$ unless $X$ is covered by rational surfaces.
		
		\item KSC (2)
		is affirmative for $(X, f)$ if
		$f \in \Aut (X)$.
	\end{enumerate}
\end{theorem}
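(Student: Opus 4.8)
The plan is to prove Theorem~\ref{mainthm1} by reducing the birational dynamics on an irregular threefold to a situation we can control via the Albanese morphism and the theory of dynamical degrees under equivariant fibrations. First I would pass to a smooth projective model $X$ on which $f$ acts as a pseudo-automorphism, or better, run an $f$-equivariant minimal model program. Since $q(X)>0$, the Albanese map $\alb_X \colon X \to \Al(X)$ is non-trivial, and any $g \in \Bir(X)$ descends (after replacing $X$ by a suitable blow-up, which does not change $\delta_g$ or $\overline a_g$, $\underline a_g$ by their birational invariance) to a self-morphism $g_A$ of the abelian variety $A=\Al(X)$; concretely $g_A$ is the composition of a translation and a group automorphism. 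The key structural input is then a product-type inequality for dynamical degrees along the Albanese fibration: $\delta_f = \max\{\delta_{f_A},\ \delta_{f\,|\,F}\}$, where $F$ denotes a general fibre and $f\,|\,F$ the induced birational map (this follows from the theory of Dinh--Nguy\^en on dynamical degrees of equivariant fibrations, combined with $\delta_{f_A}$ being computed by an eigenvalue of the induced map on $H^1$).

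Next I would split into cases according to the dimension of the image of the Albanese map, i.e. according to $q(X) \le 3$. If the Albanese image is a threefold (so $\alb_X$ is generically finite onto its image), then $f$ is, up to birational conjugacy, the restriction of a self-morphism of an abelian variety, and KSC (2) for such maps is exactly the cited result of Kawaguchi--Silverman and Silverman quoted right after Theorem~\ref{mainthm2}; so we are done. If the Albanese image has dimension $1$ or $2$, the general fibre $F$ has dimension $2$ or $1$ respectively. In the fibre dimension $1$ case, $F$ is a smooth curve; since the base has Kodaira dimension $\ge 0$ (it is an abelian variety or a curve of genus $\ge 1$) and $f$ permutes fibres, one invokes the Nakayama--Zhang-type result (Theorem~\ref{thmNZ} in the excerpt) to see that either $\delta_f=1$, in which case $\overline a_f(x)\le \delta_f=1\le \underline a_f(x)$ forces equality and KSC (2) holds, or $\cZ(f)=\emptyset$ and KSC (2) holds vacuously. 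In the fibre dimension $2$ case, the Albanese image is an elliptic curve $E$; the general fibre $F$ is a smooth projective surface and $f$ induces a birational self-map $f\,|\,F$ of $F$, with $\delta_f = \max\{1, \delta_{f\,|\,F}\} = \delta_{f\,|\,F}$ if $\delta_{f\,|\,F}>1$. Here I would first handle the surface $F$: if $\kappa(F)\ge 1$ then again $\cZ(f\,|\,F)=\emptyset$ and, using that the orbit of $x$ projects into a single fibre only if it is not Zariski dense, conclude $\cZ(f)=\emptyset$ unless $\delta_f=1$; if $\kappa(F)=0$ then $F$ is (bir.) abelian, bielliptic, K3 or Enriques, and on each of these KSC (2) for birational maps is known (via Kawaguchi--Silverman for the abelian/bielliptic case, and via the work on K3/Enriques surfaces and the fact that $\delta$ computed on $H^2$ is an algebraic integer with the orbit-height comparison); the case $\kappa(F)=-\infty$ with $F$ \emph{not} rational means $F$ is birationally ruled over a curve of genus $\ge 1$, which again lands in a situation with an invariant fibration to a positive-genus curve, handled as before — the only genuinely leftover possibility being $F$ rational, which is exactly the excluded case ``$X$ covered by rational surfaces'' in part (1), and which becomes available in part (2) because automorphisms are more rigid.

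For part (2), where $f \in \Aut(X)$, the point is that an automorphism of a smooth projective threefold preserves the Albanese fibration on the nose and acts on the whole of $H^*(X,\Z)$, so $\delta_f$ is an algebraic integer (an eigenvalue of $f^*$ on $H^2$), and one can combine the inequality $\delta_f \ge \overline a_f(x)$ from~\eqref{ineq-dalimsup} with a matching lower bound $\underline a_f(x)\ge \delta_f$ obtained from the canonical height machinery attached to an $f^*$-eigenclass. Concretely, when the Albanese image is a curve $E$ one gets an invariant elliptic fibration $X \to E$; a Zariski dense orbit must have dense image in $E$ and dense image in a general fibre surface $F$, and on $F$ the automorphism $f\,|\,F$ has $\delta_{f\,|\,F}>1$ only if $F$ is rational or a (non-rational) ruled, K3, Enriques, abelian or bielliptic surface — and for all of these except rational surfaces KSC (2) for automorphisms is already in the literature (K3/Enriques by work using the Néron--Tate-type height built from the eigenvector in $\NS(F)_\R$, abelian/bielliptic by Kawaguchi--Silverman). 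The case where the Albanese map is generically finite is again covered by abelian variety case. I would also separately dispose of the case $\delta_f = 1$: then $\overline a_f(x) \le 1$ and $\underline a_f(x)\ge 1$ give $a_f(x)=1=\delta_f$ for every $x\in X_f(\oQ)$, so KSC (2) is automatic.

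The main obstacle I anticipate is the surface-fibre step, i.e. establishing KSC (2) for the induced birational (resp.\ biregular) self-map on the general Albanese fibre $F$ \emph{uniformly enough} that it glues with the base dynamics — one needs not merely that KSC (2) holds for $f\,|\,F$ on a \emph{fixed} $F$, but a statement allowing the fibre to vary and allowing the point $x$ to have dense image in $E$. Making this precise requires a relative/fibered version of the arithmetic degree comparison: showing $\underline a_f(x) \ge \delta_{f\,|\,F}$ for $x$ with dense orbit, by lifting a canonical height from the relative Néron--Severi eigenclass and controlling the height contribution from the base. This is where I expect the real work to be, and it is presumably why part (1) must exclude the case ``$X$ covered by rational surfaces'': on a rational surface fibre there is no ample $f^*$-eigenclass giving a canonical height, and the birational (non-automorphism) case of KSC (2) for rational surfaces is genuinely open, whereas in the automorphism case one recovers enough rigidity (an action on $H^2$ of the whole threefold, plus Xie's theorem~\cite[Theorem 1.3]{Xi15} on surfaces providing the Zariski dense orbit input) to push the argument through.
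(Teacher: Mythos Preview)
Your central step has a genuine gap. When the Albanese base is an elliptic curve $E$ and $\cZ(f)\ne\emptyset$, the induced map $f_E$ must be a translation of infinite order (Lemma~\ref{lem-ell}), so $f$ preserves \emph{no} fibre: there is no ``induced birational self-map $f|_F$'' of a fixed fibre to which surface KSC could be applied. The relative dynamical degree $d_1(f|\varphi)$ is defined, but it is not the first dynamical degree of any self-map, and your case split on $\kappa(F)$ cannot proceed as written. You rightly flag the fibred height comparison as the main obstacle, but you propose no mechanism for it, and in fact the paper does not attempt one. (Your $q=2$ case is also off: Theorem~\ref{thmNZ} concerns $\kappa(X)$, not fibres; the correct move is simply $\delta_f=\delta_{f_B}$ via Lemma~\ref{lem32}, then Lemma~\ref{lem22} plus surface KSC for $f_B\in\Aut(B)$.)

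The paper's route is structural and splits first on $\kappa(X)$ rather than on fibre type. For $\kappa=0$, $q\ge 2$ it invokes Theorem~\ref{mainthm2}. For $\kappa=0$, $q=1$ the key idea you are missing---although you gesture at an equivariant MMP---is Lemma~\ref{lem-k0min}: the minimal model $Y$ is \emph{smooth} with $\Bir(Y)=\Aut(Y)$; one then shows $\alpha(I(\nu))\subset E$ is finite (Claim~\ref{claim-ndom}), so any dense orbit eventually avoids $I(\nu)$, and Lemma~\ref{lem21} reduces everything to the automorphism $f_Y$, handled by~\cite{LS21}. For $\kappa=-\infty$, $q=1$: if the general fibre has $q(X_t)>0$, one builds a $\Bir(X)$-equivariant tower $X\dashrightarrow S\dashrightarrow E$ forcing $\delta_f=1$ (Lemma~\ref{lem23}); if $q(X_t)=0$ the fibre is rational, which is the case excluded in (1). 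For part~(2) in that excluded case, the decisive tool is not an eigenclass canonical height on $H^2(X)$ but Lesieutre's classification~\cite{Le18} of dynamically minimal threefold automorphisms: after contracting, some $f^n$ admits an equivariant map to a surface (or a conic bundle, whence $\delta_{f^n}=1$), and Lemma~\ref{lem22} together with~\cite{KS14} finishes.
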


Note that KSC (2) is unknown for rational surfaces\footnote{This is now known by \cite{Xi24}.} including $\P^2$; see~\cite[Theorem 1.4]{MSS18} for the known result in dimension $2$.
This is related to the exception in Theorem~\ref{mainthm1} (1). 

Through our attempt of studying the Kawaguchi--Silverman conjecture, 
we feel that one of the main 
difficulties in generalizing results 
from $\End (X)$ to $\Rat (X)$ (or even just $\Bir(X)$) is the lack of 
a good functorial property of the height functions under rational self-maps (see e.g. \cite[Exercise B2]{HS00} for a concrete example).  
At various places we need to circumvent the lack of such functoriality by means of 
algebro-geometric arguments, which is possible in the setting of
Theorems~\ref{mainthm2} and~\ref{mainthm1}.

The last result concerns Question~\ref{quest1}.
Let $f \in \Bir(X)$ where $X$ is a smooth projective variety over $\Qbar$ of dimension $d \ge 2$.
We have mentioned that $\cZ(f) \ne \emptyset$ only if $\gk(X) \le 0$.
Another necessary condition of $\cZ(f) \ne \emptyset$
is that the Albanese map $\alb : X \to \Alb(X)$ is surjective
(see Proposition~\ref{pro-albsurj}).
So $\cZ(f) \ne \emptyset$ implies that $(\gk(X),q(X))$
is in the following set:
\begin{equation}
\begin{split}
\cS_{d} & \cnec \Set{(\gk(X),q(X)) | \dim X = d,  \gk(X) \le 0,  \alb : X \to \Alb(X) \text{ surjective}} \\
& = \Set{(0,0), \ldots, (0, d), (-\infty,0), \ldots, (-\infty, d-1) }.
\end{split}
\end{equation}
In Theorem~\ref{mainthm2}.(2),
we have seen that a birational map $f \in \Bir(X)$ never has
Zariski dense $\Qbar$-orbits when
$(\gk(X),q(X)) = (0,d-1)$.
Our last result shows that this is the only exception among the list $\cS_{d}$,
by constructing explicit examples realizing each remaining pair.

\begin{theorem}\label{mainthm3}
	Let $X$ be a smooth projective variety and $f \in \Bir (X)$.
	Assume that $d:= \dim\, (X) \ge 2$.
 Then:
	
	\begin{enumerate}
		\item
		$f$ has Zariski dense $\Qbar$-orbits only if
		$$(\kappa (X), q(X)) \in \cS_{d} \setminus \{(0,d-1)\}.$$
		\item Conversely, for each $(\kappa, q) \in \cS_{d}$ such that $(\kappa, q) \ne (0,d-1)$,
		there exist $X$ and $f \in \Aut (X)$
		with $(\kappa (X), q(X)) = (\kappa, q)$ such that 
		$f$ has Zariski dense $\Qbar$-orbits. 
Moreover, there exists such $f$ with $\delta_f > 1$ 
			except when $(d,\gk,q) = (2,-\infty,1)$.
	\end{enumerate}
\end{theorem}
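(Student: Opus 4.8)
\emph{Part (1).} The plan is to derive part~(1) from results already in the paper and to prove part~(2) by writing down explicit examples in product form; the genuinely delicate point will be the Zariski density of the orbits. For part~(1), suppose $\cZ(f) \ne \emptyset$. Then $\kappa(X) \le 0$ by the theorem of Nakayama and Zhang (Theorem~\ref{thmNZ}), and $\alb \colon X \to \Alb(X)$ is surjective by Proposition~\ref{pro-albsurj}, so $(\kappa(X), q(X)) \in \cS_d$ by the very definition of $\cS_d$. Moreover $(\kappa(X), q(X)) \ne (0, d-1)$ by Theorem~\ref{mainthm2}.(2), which asserts that $\cZ(g) = \emptyset$ for all $g \in \Bir(X)$ whenever $\kappa(X) = 0$ and $q(X) = \dim X - 1$. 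Hence $(\kappa(X), q(X)) \in \cS_d \setminus \{(0,d-1)\}$. This step is pure bookkeeping of known statements, and I do not anticipate any difficulty.

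\emph{Part (2): the varieties.} For a pair $(\kappa, q) \in \cS_d \setminus \{(0,d-1)\}$ I would take $X$ of product form
\[
X = A \times Y,
\]
where $A$ is an abelian variety of dimension $q$ and $Y$ is smooth projective of dimension $d - q$ with $q(Y) = 0$ and $\kappa(Y) = \kappa$ (with $X = A$ when $q = d$). For $\kappa = -\infty$ one takes $Y = \P^{d-q}$, or, when $d - q \ge 2$, $Y = S \times \P^{d-q-2}$ with $S$ a rational surface carrying an automorphism of positive entropy. For $\kappa = 0$ one has $d - q \ne 1$ (that being exactly the excluded pair), hence $d - q = 0$ or $d - q \ge 2$; in the latter case one takes $Y$ to be a product of K3 surfaces, or such a product together with one Calabi--Yau threefold when $d - q$ is odd, each factor chosen to carry a positive-entropy automorphism. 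A routine computation of plurigenera and of $H^1(\sO)$ for products gives $\kappa(X) = \kappa$, $q(X) = q$, and $\Alb(X) = A$ with $\alb$ surjective, so $(\kappa(X), q(X)) = (\kappa, q) \in \cS_d$, as required.

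\emph{Part (2): the automorphisms.} I would take $f = g \times h \in \Aut(X)$ (with the evident reading when a factor is absent) and place the entropy on whichever factor can carry it. If $q \ge 2$, take $A = E^q$ for an elliptic curve $E$ and let $h \in \Aut(A)$ be induced by the companion matrix $M$ of $x^q - x - 1$; this polynomial is irreducible (a theorem of Selmer), has a real root $> 1$, and has no root of unity among its roots, so $\delta_h > 1$ while $h$ has no non-constant invariant rational function, whence $\cZ(h) \ne \emptyset$ by~\cite{GS17}. If $q \le 1$, concentrate the entropy instead on a positive-entropy automorphism of a K3 surface (when $\kappa = 0$) or of a rational surface (when $\kappa = -\infty$) occurring as a factor, whose Zariski dense orbit is furnished by~\cite{Xi15}; this needs $d - q \ge 2$. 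In either case $\delta_f$ dominates the dynamical degree of that factor, so $\delta_f > 1$. The one pair left over is $(d,\kappa,q) = (2,-\infty,1)$: any $X$ with these invariants is birational to a $\P^1$-bundle over an elliptic curve $C$, so every $f \in \Bir(X)$ descends to $\Aut(C)$ and acts birationally on the generic fibre, forcing $\delta_f = 1$ by the product formula for dynamical degrees along this fibration; for this pair I would instead take $X = \P^1 \times C$ and $f$ a sufficiently general automorphism, which still has an infinite --- hence, on a surface, Zariski dense --- orbit.

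\emph{The main obstacle.} The hard part will be to show that the product maps $f = g \times h$ (and, inside $Y$, the products of surface automorphisms) really do have a Zariski dense orbit, since this fails for careless choices --- e.g. $(z,w) \mapsto (2z,2w)$ on $\P^1 \times \P^1$ has every orbit contained in a line. My plan is to make the dynamics on the factors independent: on each factor pick a point with Zariski dense orbit that is moreover general in that factor (the set of such points being Zariski dense, by the density of points with infinite orbit~\cite{Am11} together with~\cite{GS17} and~\cite{Xi15}) and arrange the linear and toric parameters of the different factors to be multiplicatively independent. To see that no proper closed $W \subsetneq X$ contains the resulting orbit, project $W$ to each factor --- the image is invariant and contains a Zariski dense orbit, hence equals that factor --- and then rule out the intermediate subvarieties: for the toric and abelian building blocks this comes down to the linear independence of the sequences $(\mu^n)_{n \ge 0}$ for pairwise distinct $\mu$'s (a Vandermonde argument), and for a $\P^1$-bundle over an elliptic curve to the impossibility of conjugating translation by a non-torsion point, through a finite morphism, to a map of positive entropy. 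I expect this collection of independence verifications to be the only place where real work is needed.
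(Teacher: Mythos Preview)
Your Part~(1) is correct and matches the paper's argument.

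For Part~(2) your building blocks differ from the paper's, and the difference is not cosmetic: it is exactly what makes your ``main obstacle'' a genuine gap rather than a routine verification. Your Vandermonde/independence scheme controls invariant subvarieties only when the dynamics on each factor is essentially linear --- abelian varieties, tori, projective space under a diagonal map --- because then invariant divisor classes are read off from eigenvalues on $H^{1,1}$. For a K3 or Calabi--Yau factor there is no such linear description of the invariant subvarieties, and ruling out a proper invariant $W \subsetneq (\text{K3}) \times (\text{K3})$ or $E^q \times (\text{K3})$ that surjects onto both factors is an instance of the Zariski Dense Orbit conjecture, which is open in this generality. Already at $(d,\kappa,q) = (3,0,0)$ your $Y$ is a Calabi--Yau threefold, and you cite nothing that produces a Zariski dense orbit on it; \cite{Xi15} is only for surfaces and \cite{GS17} only for abelian varieties.

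The paper sidesteps this by never leaving (quotients and covers of) products of elliptic curves. For $\kappa = 0$ the $q=0$ factor is not a K3 or CY but the Kummer-type variety $Y_m = \wt{E^m}/\langle \pm 1\rangle$, so that $q(Y_m)=0$ while the dynamics still come from a matrix in $\SL(m,\Z)$; the mixed cases are $Y_{d-n}\times E^n$. Density on the underlying $E^m\times E^n$ is then forced by choosing the two Pisot units from \emph{different} number fields ($\Q(\sqrt[m]{2})$ versus $\Q(\sqrt[n]{3})$), so that their minimal polynomials are coprime and any invariant subvariety --- necessarily a translated abelian subvariety --- must have $H^{1,0}$ of full dimension. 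For $\kappa=-\infty$ the paper uses $\P^1\times V$ with $\kappa(V)=0$ and the translation $\tau(t)=t+1$ on $\P^1$; an invariant $W \subsetneq \P^1\times V$ dominating both factors would be generically finite over $V$, hence of nonnegative Kodaira dimension, and smooth over $\P^1\setminus\{\infty\}$ since $\tau$ fixes only $\infty$ --- this is excluded by the Viehweg--Zuo isotriviality theorem~\cite{VZ01}. These two devices (coprime Pisot polynomials for the abelian products, Viehweg--Zuo for the $\P^1$ factor) are what replace your Vandermonde argument, and without analogues for K3/CY3 factors your density claims do not go through.
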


Our paper is organized as follows.
\S\ref{notation} and \S\ref{sect2} are preliminary sections; 
in \S\ref{notation}, we introduce some notations with brief explanations and in
\S\ref{sect2}, we prove several elementary lemmas, some of which are 
well-known to experts. We prove Theorems~\ref{mainthm2}, \ref{mainthm1} and \ref{mainthm3} in \S\ref{irranydim}, \S\ref{sect3} and \S\ref{sect4} respectively.

\subsection*{Acknowledgements}
\hfill

We would like to express our thanks to the NCTS and the staff members there for financial support and hospitality. This joint work was initiated during the third named author's stay at the NCTS in October 7 - December 16 2021.
We thank Professors Yohsuke Matsuzawa and Junyi Xie for helpful comments,
as well as Professors Shu Kawaguchi, De-Qi Zhang and Doctor Long Wang for their interest in this work and discussions. We also would like to express our thanks for the referee for her/his careful reading and valuable comments.

\section{Notation}\label{notation}

In this paper, we fix an algebraic closure $\oQ$ of the field of rational numbers $\Q$.
Unless stated otherwise, all varieties and rational maps among them are defined over $\oQ$.
For an abelian group $A$ and a field $K$, we denote by $A_K$ the $K$-vector space $A \otimes_{\Z} K$.

Let $X$ be a normal projective variety defined over $\oQ$.
For any Cartier divisor $D$, we denote by
$$h_D : X(\oQ) \to \R$$
the {\it logarithmic Weil height function} associated to $D$.
The function $h_D$ is unique up to bounded functions on $X(\oQ)$.
When $H$ is an ample divisor, we may and will choose $h_H$ so that $h_H \ge 1$ on $X(\oQ)$.
The reader is referred to~\cite[Part B, Theorem B.3.6]{HS00} for the basic properties on $h_D$ and the Weil height machinery.

Let $X$ be a normal projective variety over a field $\bk$.
Recall the definition of $\Rat (X)$ and $\Bir(X)$ from the introduction.
The subsets of (regular) endomorphisms and automorphisms of $X$ are denoted by
$\End (X)\subset \Rat (X)$ and $\Aut (X) \subset \Bir (X)$  respectively.

Let $f : X \dasharrow Y$ be a dominant rational map to a projective variety $Y$. We set
$$I(f) := X \setminus U(f),$$
where $U(f)$ is the largest Zariski open subset of $X$ on which $f$ is defined as a morphism to $Y$. We call $I(f)$ the indeterminacy set of $f$. Since $X$ is normal and $Y$ is projective, $I(f)$ is of codimension at least $2$ in $X$.

Assume that $X$ is smooth. Let
\begin{equation}\label{res-normal}
	X \xlto{p} \wt{X} \xto{q} Y
\end{equation}
be the resolution of $f$ by normalizing the graph of $f$.
For any Cartier divisor $D$ on $Y$,  define
$$f^*D \cnec p_*q^*D$$
which is a Cartier divisor.
It is easy to see that if we replace $\wt{X}$ in~\eqref{res-normal} by any other normal projective variety birational to $X$,
then $f^*D$ still defines the same Cartier divisor.

The Kodaira dimension of a smooth projective variety $X$ is denoted by $\kappa (X)$.
It is a birational invariant and when $X$ is not smooth,
$\kappa (X)$ always means the Kodaira dimension of any projective resolution of $X$.

\section{Some general lemmas}\label{sect2}

In this section, we prove some elementary lemmas 
which we will use in the rest of this article.

\ssec{First dynamical degree}
\hfill

\begin{lemma}\label{lem32}
	Let $\varphi : X \dasharrow B$ be a dominant rational map from a smooth projective variety $X$ to a smooth projective variety $B$ such that $\dim B = \dim X$ or $\dim B = \dim X -1$. Let $f \in \Bir (X)$. Assume that there exists $f_B \in \Rat (B)$ such that $f_B \circ \varphi = \varphi \circ f$.
	Then necessarily $f_B \in \Bir (B)$ and $\delta_f = \delta_{f_B}$.
\end{lemma}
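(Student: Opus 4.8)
The plan is to separate the two cases $\dim B = \dim X$ and $\dim B = \dim X - 1$, since the geometry is different, but in both cases the heart of the matter is comparing the dynamical degrees via the product formula / functoriality of dynamical degrees along a fibration. First I would treat the case $\dim B = \dim X =: d$. Here $\varphi$ is a dominant rational map between smooth projective varieties of the same dimension, hence generically finite of some degree $e \ge 1$. The relation $f_B \circ \varphi = \varphi \circ f$ together with $f \in \Bir(X)$ forces $f_B$ to be dominant and generically finite; a degree count $\deg(f_B) \cdot e = \deg(\varphi \circ f) = \deg(\varphi) \cdot \deg(f) = e \cdot 1$ shows $\deg(f_B) = 1$, i.e.\ $f_B \in \Bir(B)$. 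For the equality of dynamical degrees, I would use that $\delta$ is invariant under generically finite dominant rational maps that are equivariant (this is standard, going back to Dinh--Sibony / Dinh--Nguyen; see also \cite{Tr15,Tr20}): intersecting $(f_B^n)^* H_B$ with $H_B^{d-1}$ and pulling back along $\varphi$ multiplies the top intersection number by $e = \deg\varphi$, so after taking $n$-th roots and letting $n \to \infty$ the factor $e^{1/n} \to 1$ and $\delta_{f_B} = \delta_f$. One must be a little careful that $\varphi^*$ of a pullback divisor agrees with the pullback along the composed map up to the resolution chosen, but this is exactly the well-definedness noted after \eqref{res-normal}.

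Next I would treat the case $\dim B = \dim X - 1$, so $\varphi : X \dasharrow B$ is a dominant rational map with one-dimensional generic fibre. Here $f_B$ dominant (automatic) and the equivariance again give that $f_B$ is generically finite of degree one: resolving and comparing function fields, $f$ permutes the fibres of $\varphi$ birationally, so $f_B$ must be birational (alternatively, $\deg f_B$ divides $\deg f = 1$ after the standard fibration degree argument, using that the generic fibre is a curve on which $f$ acts). For the dynamical degrees, the key input is the \emph{product formula for dynamical degrees along a fibration}: for an equivariant dominant rational map $\varphi : X \dasharrow B$ with generic fibre $F$ of dimension $\dim X - \dim B$, one has $\delta_f = \max\{\delta_{f_B},\ \lambda\}$ where $\lambda$ is the relative first dynamical degree of $f$ along the fibration (Dinh--Nguyen, see also Dinh--Nguyen--Truong). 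Since here the fibres are curves and $f$ restricts to a birational self-map of the generic fibre, a smooth projective curve, the relative first dynamical degree is $1$; hence $\delta_f = \max\{\delta_{f_B}, 1\} = \delta_{f_B}$ because $\delta_{f_B} \ge 1$ always.

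The main obstacle I expect is being precise about the equivariance hypotheses needed for the dynamical-degree formulas: the cited theorems are usually stated for \emph{morphisms} or require passing to a common resolution on which both $f$ and $\varphi$ (and $f_B$) become morphisms simultaneously, which one can always arrange by blowing up but requires a short argument that the resulting diagram is genuinely commutative and that the dynamical degrees are unchanged under such birational modifications (which they are, being birational invariants of $(X,f)$). A secondary, more elementary, point is checking that $f_B \in \Bir(B)$ rather than merely dominant: I would phrase this uniformly as "$f$ birational $\Rightarrow$ it acts invertibly on the generic fibre structure of $\varphi$, hence the induced map on the base is invertible," made rigorous by the degree multiplicativity $\deg(\varphi)\deg(f) = \deg(f_B)\deg(\varphi)$ in the equidimensional case and by the analogous fibration degree identity in the codimension-one case. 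Once $f_B \in \Bir(B)$ is known, both $\delta_f$ and $\delta_{f_B}$ are defined by \eqref{eqn-defdd} and the comparison above finishes the proof.
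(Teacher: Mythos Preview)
Your proposal is correct and uses essentially the same engine as the paper, namely the product formula for relative dynamical degrees due to Dinh--Nguyen--Truong (cited in the paper as~\cite{DNT12},~\cite{Tr15}). The only real difference is organizational: the paper treats the two cases $\dim B = \dim X$ and $\dim B = \dim X - 1$ uniformly by invoking the product formula $d_1(f) = \max\bigl(d_1(f|\varphi)\,d_0(f_B),\ d_0(f|\varphi)\,d_1(f_B)\bigr)$ and observing that birationality of $f$ forces $d_0(f|\varphi) = d_1(f|\varphi) = 1$ in both cases, whereas you split off the equidimensional case and handle it by a more elementary degree-count plus generically-finite-invariance argument. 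Your route is slightly longer but perhaps more transparent in the equidimensional case; the paper's is more economical. For the assertion $f_B \in \Bir(B)$ the paper simply says ``clear since $f \in \Bir(X)$'' (the inverse $f^{-1}$ descends through $\varphi$ to a rational inverse of $f_B$), which is cleaner than your fibrewise degree discussion in the codimension-one case.
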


\begin{proof} The first assertion is clear as $f \in \Bir (X)$.
	As the first dynamical degree is a birational invariant~\cite[Corollaire 7]{DS05}, we may assume that $\varphi$ is a surjective morphism by taking a projective resolution of the rational map $f$.
	By the product formula for the relative dynamical degree (\cite{DNT12}, \cite[Theorem 1.4]{Tr15}), we have
	$$\delta_f = d_1(f) = \max (d_1(f|\varphi)d_0(f_B), d_0(f|\varphi)d_1(f)).$$
	Here $d_k(f|\varphi)$ denotes the relative $k$-th dynamical degree of $f$ (\cite{DNT12}, \cite[Theorem 1.4]{Tr15}).
	Since $f$ is birational and $\dim B = \dim X$ or $\dim X -1$, it follows from the definition of $d_k(f|\varphi)$ that
	$$d_1(f|\varphi) = d_0(f|\varphi) = 1.$$
	Since $d_1(f_B) \ge 1 = d_0(f_B)$ (for any $f_B \in \Rat (B)$), it follows that
	$$\delta_f = \max (d_0(f_B), d_1(f_B)) = d_1(f_B) = \delta_{f_B}.$$
\end{proof}

\ssec{Zariski density of orbits}
\hfill

Recall that
$$\cZ(f)\cnec \Set{x \in X_f(\oQ) | \obfx \text{  is Zariski dense in } X}$$
for every rational dominant self-map $f:X \dto X$ of  a smooth projective variety
$X$ over $\Qbar$.

\begin{lem}\label{lem-itZ}
	Let $X$ be a smooth projective variety and let $f \in \Rat(X)$. For any $N \in \Z_{>0}$,
	we have
	$$\cZ(f) = \cZ(f^N) \cap X_f(\oQ)$$
\end{lem}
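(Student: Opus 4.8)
The statement $\cZ(f) = \cZ(f^N) \cap X_f(\oQ)$ should follow from two elementary observations: first, that a point lies in $X_{f^N}(\oQ)$ once it lies in $X_f(\oQ)$ (so the intersection with $X_f(\oQ)$ makes $\cZ(f^N)$ meaningful and the right-hand side well-posed), and second, that the Zariski closure of $\Orb_f(x)$ can be compared with the Zariski closure of $\Orb_{f^N}(x)$. I would prove the two inclusions separately, in both cases chasing only the orbit closures and the defining condition "$f^n(x) \in U(f)$ for all $n$".

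**Step 1: $X_f(\oQ) \subseteq X_{f^N}(\oQ)$, and $f$ maps $X_f(\oQ)$ to itself.** If $x \in X_f(\oQ)$, then $f^n(x) \in U(f)$ for all $n \ge 0$, so in particular $(f^N)^n(x) = f^{Nn}(x) \in U(f) \subseteq U(f^N)$ for all $n$, giving $x \in X_{f^N}(\oQ)$. Likewise $f(x) \in X_f(\oQ)$ since $f^n(f(x)) = f^{n+1}(x) \in U(f)$ for all $n \ge 0$. Hence for $x \in X_f(\oQ)$ every forward iterate $f^i(x)$ is again in $X_f(\oQ) \subseteq X_{f^N}(\oQ)$, and it is legitimate to speak of $\Orb_{f^N}(f^i(x))$.

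**Step 2: the inclusion $\cZ(f^N) \cap X_f(\oQ) \subseteq \cZ(f)$.** If $x \in X_f(\oQ)$ and $\Orb_{f^N}(x)$ is Zariski dense in $X$, then since $\Orb_{f^N}(x) \subseteq \Orb_f(x)$ we get immediately that $\Orb_f(x)$ is Zariski dense, so $x \in \cZ(f)$.

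**Step 3: the inclusion $\cZ(f) \subseteq \cZ(f^N) \cap X_f(\oQ)$ — the main point.** Let $x \in \cZ(f)$, so $x \in X_f(\oQ)$ and $\ol{\Orb_f(x)} = X$. Decompose the $f$-orbit into $N$ interleaved $f^N$-orbits:
\[
\Orb_f(x) = \bigcup_{i=0}^{N-1} \Orb_{f^N}(f^i(x)).
\]
Taking Zariski closures, $X = \ol{\Orb_f(x)} = \bigcup_{i=0}^{N-1} \ol{\Orb_{f^N}(f^i(x))}$; since $X$ is irreducible, one of these closed subsets, say $Z := \ol{\Orb_{f^N}(f^j(x))}$, must equal $X$. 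By Step 1, $y := f^j(x) \in X_f(\oQ) \subseteq X_{f^N}(\oQ)$, so $y \in \cZ(f^N) \cap X_f(\oQ)$. It remains to transfer density from $y$ back to $x$. For this, note $\Orb_{f^N}(x) \supseteq \{x\} \cup f^N(\Orb_{f^N}(x))$; more usefully, applying the rational map $f^j$ to $\Orb_{f^N}(x)$ lands (at points where everything is defined, which holds along the orbit of $x \in X_f(\oQ)$) in $\Orb_{f^N}(y)$, and conversely $f^{N-j} \circ f^N$ maps $\Orb_{f^N}(y)$ into $\Orb_{f^N}(x)$ after one $f^N$-step. Concretely, $f^{N-j}(f^N(f^{Nk}(y))) = f^{N(k+2)-j+j} \cdots$ — here I would simply observe that $\Orb_{f^N}(y) = \{f^{j}(x), f^{N+j}(x), f^{2N+j}(x), \dots\}$ and that the map $z \mapsto f^{N-j}(f^N(z))$, which is dominant and defined on the dense open $U(f^{N})$ meeting $\Orb_{f^N}(y)$, sends this set into $\Orb_{f^N}(x)$; hence $\ol{\Orb_{f^N}(x)}$ contains the closure of the image of a Zariski-dense set under a dominant self-map, which is all of $X$. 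Thus $x \in \cZ(f^N)$, and combined with $x \in X_f(\oQ)$ we conclude $x \in \cZ(f^N) \cap X_f(\oQ)$.

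**Expected obstacle.** The only delicate point is Step 3's density-transfer: one must be careful that applying the rational maps $f^j$, $f^{N-j}$ to orbit points is legitimate, which is exactly guaranteed by $x \in X_f(\oQ)$ (every iterate avoids the indeterminacy locus), and that "image of a dense set under a dominant rational self-map of an irreducible variety is dense", which is standard. No deep input is needed; the lemma is a bookkeeping statement, and the proof the authors give is presumably a two-line version of the above.
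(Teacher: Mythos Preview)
Your approach is essentially the paper's: the authors' proof is the single line $\Orb_f(x) = \bigcup_{k=0}^{N-1} f^k(\Orb_{f^N}(x))$ for $x \in X_f(\oQ)$, which is exactly your decomposition (since $f^k(\Orb_{f^N}(x)) = \Orb_{f^N}(f^k(x))$), with the two inclusions left implicit.

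One correction in Step~1: the inclusion $U(f) \subseteq U(f^N)$ is false in general, since a point of $U(f)$ may be mapped by $f$ into $I(f)$. The conclusion $X_f(\oQ) \subseteq X_{f^N}(\oQ)$ is nonetheless correct: for $x \in X_f(\oQ)$ every iterate $f^m(x)$ lies in $U(f)$, so in particular $f^{Nn}(x), f^{Nn+1}(x), \ldots, f^{N(n+1)-1}(x) \in U(f)$, which gives $f^{Nn}(x) \in U(f^N)$. With this fix your argument is complete. Your Step~3 density transfer via $f^{N-j}$ is fine, though a slightly shorter route (implicit in the paper) is to note directly that if $\Orb_{f^N}(x)$ is not Zariski dense then neither is any $f^k(\Orb_{f^N}(x))$, by dimension reasons, so $\Orb_f(x)$ lies in a finite union of proper closed subsets.
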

\begin{proof}
This follows from
$$\Orb_f(x) = \cup_{k=0}^{N-1} f^k(\Orb_{f^N}(x))$$
for any $x \in X_f(\oQ)$.
\end{proof}

\begin{lemma}\label{lem-dorb}
	Let $\pi : X \dto Y$ be a dominant rational map between irreducible varieties.
	Let $f \in \Rat (X)$ and $g \in \Rat (Y)$ such that 
$$\pi \circ f = g \circ \pi.$$
	Let $x \in \cZ(f)$.
	Then
	$$\Set{\pi \circ f^k(x) \in Y | k \in \Z_{\ge 0} \text{ and }  f^k(x) \notin I(\pi)}$$
	is dense in $Y$.
\end{lemma}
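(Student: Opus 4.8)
\textbf{Proof plan for Lemma~\ref{lem-dorb}.}

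The plan is to argue by contradiction, using the fact that the orbit $\Orb_f(x)$ is Zariski dense in $X$ together with the semi-conjugacy $\pi \circ f = g \circ \pi$. First I would reduce to the situation where $\pi$ is an everywhere-defined dominant \emph{morphism}: replace $X$ by a projective resolution $\wt X$ of $\pi$ (so that $\wt\pi \colon \wt X \to Y$ is a morphism), let $p \colon \wt X \to X$ be the birational projection, and lift $f$ to $\wt f := p^{-1} \circ f \circ p \in \Rat(\wt X)$. A point $x \in \cZ(f)$ admits a lift $\wt x \in \cZ(\wt f)$ outside the (closed, proper) exceptional locus of $p$ and the indeterminacy loci of all the maps involved, and the set $\{\wt\pi(\wt f^k(\wt x))\}$ maps onto a subset of $\{\pi \circ f^k(x)\,:\,f^k(x)\notin I(\pi)\}$ up to finitely many indices; hence it suffices to prove the statement for $\wt\pi$ a morphism. (One must be a little careful that $\wt x \in X_{\wt f}(\oQ)$; this is arranged by throwing away the countably-many bad points, or more cleanly by noting that $\cZ(\wt f)$ surjects onto $\cZ(f)$ because $\Orb_{\wt f}(\wt x)$ is dense whenever $\Orb_f(x)$ is and $\wt x$ lies over $x$ outside the indeterminacy locus, using that $I(\wt f^k)$ is a countable union of proper closed subsets).

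Now assume $\pi$ is a dominant morphism. Let $Z \subseteq Y$ be the Zariski closure of $S := \{\pi(f^k(x)) : k \ge 0,\ f^k(x)\notin I(\pi)\}$; I want to show $Z = Y$. The key point is that the indeterminacy locus $I(\pi)$ has codimension $\ge 2$ in $X$, so in particular it is a \emph{proper} closed subset; consequently $\Orb_f(x) \setminus I(\pi)$ is still Zariski dense in $X$ (the orbit is dense and $I(\pi)$ is not, and $\Orb_f(x)$ is a countable set, but density of a countable set is not destroyed by removing a proper closed subset — here I use that the closure of $\Orb_f(x)\cap U(\pi)$ contains $\Orb_f(x)$, which follows since $\Orb_f(x)\subseteq \overline{\Orb_f(x)\cap U(\pi)} \cup I(\pi)$ and $\overline{\Orb_f(x)} = X$ is irreducible while $I(\pi)\subsetneq X$). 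Applying the morphism $\pi$, and using that $\pi(\overline{A}) \subseteq \overline{\pi(A)}$ for any subset $A$, we get
$$
Y = \pi(X) = \pi\big(\overline{\Orb_f(x)\cap U(\pi)}\big) \subseteq \overline{\pi(\Orb_f(x)\cap U(\pi))} = \overline{S} = Z,
$$
so $Z = Y$, as desired. Finally, translating back through the resolution $p$ shows that the original set $\{\pi \circ f^k(x) : f^k(x)\notin I(\pi)\}$ — which differs from its $\wt X$-counterpart by at most finitely many points and by the birational identification $\wt\pi = \pi \circ p$ on a dense open — is dense in $Y$.

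The step I expect to need the most care is the reduction to $\pi$ a morphism, specifically checking that a dense orbit point $x$ for $f$ on $X$ genuinely lifts to a dense orbit point $\wt x$ for $\wt f$ on $\wt X$ lying in $X_{\wt f}(\oQ)$: the subtlety is that $\wt f$ is only a rational self-map, its iterates $\wt f^k$ have indeterminacy, and one needs $\wt x$ to avoid $\bigcup_k I(\wt f^k)\cup\operatorname{Exc}(p)$, a countable union of proper closed subsets, while still having $p(\wt x)$ sit on the given dense $f$-orbit. Since a single dense orbit is only a countable set of points, one cannot simply invoke a Baire-type argument to produce $\wt x$; instead I would argue directly that for each $k$ with $f^k(x)\notin \operatorname{Exc}(p)\cup I(\pi)\cup\dots$ the point $p^{-1}(f^k(x))$ is a well-defined point of $\wt X$, that these points form the $\wt f$-orbit of $\wt x := p^{-1}(x)$ up to the finitely-or-countably many bad indices, and that removing those indices does not affect Zariski density of the image in $Y$. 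Alternatively — and this is probably cleaner — one avoids lifting altogether: work on a single fixed resolution $X \xleftarrow{p} \wt X \xrightarrow{\wt\pi} Y$ of $\pi$ \emph{only}, do not try to make $f$ a morphism there, and run the contradiction argument directly with $\pi$ replaced by the morphism $\wt\pi\circ p^{-1}$ wherever it is defined, using $\operatorname{codim} I(\pi) \ge 2$ to guarantee $\Orb_f(x)$ meets the locus where everything is defined in a dense set. I would adopt this second route in the write-up.
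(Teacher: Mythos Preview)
Your proof is correct, but far more elaborate than necessary. The paper's argument is a three-line contradiction applied directly to the rational map $\pi$: if the set in question were contained in a proper closed $Z \subsetneq Y$, then $\Orb_f(x) \subset \pi^{-1}(Z) \cup I(\pi)$, and since $\pi$ is dominant and $X$ irreducible this union is a proper closed subset of $X$, contradicting $x \in \cZ(f)$. No resolution, no lifting, and --- perhaps worth noticing --- no actual use of the semi-conjugacy hypothesis $\pi \circ f = g \circ \pi$.

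The core of your second paragraph --- that $\Orb_f(x) \cap U(\pi)$ remains Zariski dense in $X$ because $I(\pi)$ is a proper closed subset of the irreducible $X$, and then pushing forward by $\pi|_{U(\pi)}$ --- is precisely the contrapositive of the paper's argument and already works for the original rational map $\pi$. Your first-paragraph reduction to the case where $\pi$ is a morphism is therefore superfluous (and a little incoherent as written: you declare $\pi$ a morphism and then continue to reason about $I(\pi)$ and $U(\pi)$ as if they were nontrivial). The lifting of $f$ to $\wt f$ and of $x$ to a point of $\cZ(\wt f)$, which you correctly flag as delicate, is a difficulty you created for yourself; your own closing instinct to ``avoid lifting altogether'' and argue directly is exactly what the paper does from the outset.
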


\begin{proof}
	Assume to the contrary that
	$\Set{\pi \circ f^k(x) \in Y |  f^k(x) \notin I(\pi)}$
	is contained
	in a proper Zariski closed subset $Z \subsetneq Y$.
	Since $\pi \circ f = g \circ \pi$, we would thus have
	$$\obfx \subset \{ \pi^{-1}(Z) \cup I(\pi) \} \subsetneq X,$$
	where $\pi^{-1}(Z)$ denotes the closure of
	$\pi^{-1}(Z \cap \pi(X \bss I(\pi)))$ in $X$, contradicting the assumption that $x \in \cZ(f)$.
\end{proof}

The following result due to Nakayama--Zhang for $f \in \Rat (X)$ (see \cite[Theorem A]{NZ09})
is fundamental, 
which is a generalization of the result of Nakamura-Ueno-Deligne for $f \in \Bir (X)$ (see \cite[Theorem 14.10]{Ue75}):

\begin{theorem}\label{thmNZ}
	Let $X$ be a smooth projective variety and $f \in \Rat (X)$.
	Let
	$$\phi_{mK_X} : X \dasharrow B \subset |mK_X|^{*}$$
	be the pluricanonical map of $X$. 
	Then $f$ descends to an automorphism of $B$ of finite order. 
	In particular, if the Kodaira dimension of $X$ is positive, i.e., if $\kappa (X) \ge 1$, then $\sZ (f) = \emptyset$.
\end{theorem}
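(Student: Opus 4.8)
The plan is to establish the statement by analyzing how $f$ interacts with the pluricanonical system. First I would recall that for a smooth projective variety $X$, the group $\Bir(X)$ acts on each space of pluricanonical forms $H^0(X, mK_X)$: a birational map $g$ induces a linear isomorphism $g^* : H^0(X, mK_X) \to H^0(X, mK_X)$, since pluricanonical forms extend across codimension-$\ge 2$ indeterminacy loci and across exceptional divisors (the canonical sheaf transforms correctly under blow-ups). This gives a homomorphism $\Bir(X) \to \PGL(H^0(X, mK_X))$ for every $m \ge 1$, and the pluricanonical map $\phi_{mK_X} : X \dto B \subset |mK_X|^* = \P(H^0(X, mK_X)^*)$ is equivariant for this action, i.e. $\phi_{mK_X} \circ f = [f^*]^{\vee} \circ \phi_{mK_X}$ where $[f^*]^\vee$ is the induced projective automorphism. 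In particular $f$ descends to an automorphism $f_B$ of the image $B$, which is the closure of $\phi_{mK_X}(X)$, and $f_B$ lies in the image of the linear action.

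Next I would address the claim that $f_B$ has finite order. The key point is that $\Bir(X)$ does not merely act linearly on $H^0(X, mK_X)$ but preserves a lattice up to scalars: fix a nonzero pluricanonical form, or better, note that the canonical ring $R(X) = \bigoplus_{m \ge 0} H^0(X, mK_X)$ carries a natural integral structure since $X$ is defined over $\oQ$ (indeed over a number field), and $\Bir(X)$ acts on $R(X)$ by graded ring automorphisms. Taking $m$ large enough that $\phi_{mK_X}$ is the Iitaka fibration (so that $B$ has dimension $\kappa(X)$ and the function field of $B$ is the degree-zero part of the $m$-canonical ring), the automorphism $f_B$ of $B$ together with its action on $\sO_B(1)$ is determined by a graded ring automorphism of a finitely generated subring of $R(X)$ defined over a number field. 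A torsion argument—either via the fact that an automorphism of a projective variety together with an ample polarization generates a group acting on the finitely many irreducible components and on a lattice in $H^0(B, \sO_B(k))$ for suitable $k$, or via Northcott-type finiteness since everything is defined over a fixed number field—then shows $f_B$ has finite order. I expect this finiteness assertion to be the main obstacle: one must argue carefully that the relevant representation of $\langle f \rangle$ is defined over $\oQ$ and has image in a group like $\GL_n(\oQ) \cap (\text{bounded set})$ or preserves a lattice, so that the image is finite. In fact the cleanest route is: $f_B$ is an automorphism of the polarized projective variety $(B, \sO_B(1))$, and $\Aut(B, \sO_B(1))$ is a linear algebraic group acting faithfully on $H^0(B, \sO_B(k))$ for $k \gg 0$; combined with the integral structure this forces any element arising from $\Bir(X)$ into a finite group — but I should double-check whether the whole group $\Aut(B, \sO_B(1))$ is finite or only the image of $\Bir(X)$; for the statement only the latter is needed, and it follows because the image preserves a $\Z$-lattice up to roots of unity.

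Finally, for the last sentence: if $\kappa(X) \ge 1$ then $\dim B \ge 1$, so $B$ is a positive-dimensional projective variety. Suppose $x \in \sZ(f)$, i.e. $\obfx$ is Zariski dense in $X$ and $x \in X_f(\oQ)$. Applying Lemma~\ref{lem-dorb} with $\pi = \phi_{mK_X}$, $g = f_B$, the set $\{ f_B^k(\phi_{mK_X}(x)) : k \ge 0,\ f^k(x) \notin I(\phi_{mK_X})\}$ is Zariski dense in $B$. But $f_B$ has finite order $N$, so this set is contained in the finite set $\{ \phi_{mK_X}(x), f_B(\phi_{mK_X}(x)), \dots, f_B^{N-1}(\phi_{mK_X}(x))\}$, which cannot be dense in the positive-dimensional variety $B$ — a contradiction. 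Hence $\sZ(f) = \emptyset$. I would remark that the hypothesis $x \in X_f(\oQ)$ is only used to make sense of the orbit staying in the domain of $f$; the density statement of Lemma~\ref{lem-dorb} already handles the possible meetings of the orbit with $I(\phi_{mK_X})$. The only genuinely delicate input is the finite-order claim, which I would isolate as the crux of the argument and attribute, in the $\Bir(X)$ case, to Deligne's appendix in Ueno~\cite{Ue75} and, in the $\Rat(X)$ case, to Nakayama--Zhang~\cite{NZ09}.
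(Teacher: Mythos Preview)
The paper does not prove this theorem; it is stated as a known result with citations to \cite{NZ09} (for $\Rat(X)$) and \cite{Ue75} (for $\Bir(X)$), accompanied only by Remark~\ref{remfield} on descending from $\C$ to $\oQ$. Your proposal ultimately takes the same route: you correctly isolate the finite-order assertion as the crux and attribute it to exactly these references, and your derivation of $\sZ(f)=\emptyset$ from that assertion via Lemma~\ref{lem-dorb} is correct (and more explicit than the paper's bare ``in particular'').

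That said, your tentative sketches of the finite-order step contain real gaps. Being defined over $\oQ$ equips $H^0(X,mK_X)$ with a $\oQ$-structure, not a $\Z$-lattice, and you give no reason $f^*$ should preserve any chosen lattice; likewise $\Aut(B,\sO_B(1))$ need not be finite (take $B=\P^n$), so the ``torsion argument'' as written does not go through. Deligne's proof in \cite{Ue75} proceeds quite differently, using a positivity/compactness property of the action of $\Bir(X)$ on pluricanonical forms over $\C$, and the extension to $\Rat(X)$ in \cite{NZ09} requires additional ideas. Since you explicitly hand this step off to the literature, these are defects in the heuristic discussion rather than in the logic of your proposal; but you should not present the lattice sketch as if it were an argument.
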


\begin{remark}\label{remfield} Theorem~\ref{thmNZ} and some other results that we will cite are formulated and proved over the complex number field
	$\C$ and not over $\oQ$. 
	However, all results over $\C$ which we will cite are valid also over $\oQ$. 
	This is because either the
	proof works also over $\oQ$ with obvious modifications, or
	one can reduce to the statement over $\C$ 
	by taking the base change under $\oQ \subset \C$.

For instance, one can deduce Theorem~\ref{thmNZ} over $\oQ$ from Theorem~\ref{thmNZ} over $\C$ by the second method, as the pluricanonical maps commute with field extensions and $\Aut(W) \subset \Aut(W_{\C})$ for a variety $W$ defined over ${\oQ}$.
\end{remark}

\ssec{Results about the Kawaguchi--Silverman Conjecture}\label{ssec-KSC}
\hfill

In this paragraph, all varieties and maps are defined over $\Qbar$.

\begin{lemma}\label{lem20} Let $X$ be a smooth projective variety and $f \in \Rat (X)$. If $\delta_f = 1$, then KSC (1) and (2) hold for $(X,f)$.
\end{lemma}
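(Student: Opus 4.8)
The plan is to reduce everything to the elementary growth estimate for Weil heights. First I would recall the general inequality~\eqref{ineq-dalimsup}, which holds for all $f \in \Rat(X)$ and all $x \in X_f(\oQ)$: it gives $\overline{a}_f(x) \le \delta_f$. When $\delta_f = 1$, this forces $\overline{a}_f(x) \le 1$. On the other hand, by the definition of $h_H$ we chose $h_H \ge 1$ on $X(\oQ)$, so $h_H(f^n(x)) \ge 1$ for every $n$, hence $\underline{a}_f(x) \ge 1$. Combining, $1 \le \underline{a}_f(x) \le \overline{a}_f(x) \le 1$, so $\underline{a}_f(x) = \overline{a}_f(x) = 1$. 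This shows that $\liminf$ and $\limsup$ agree, so the limit $a_f(x)$ exists and equals $1$, which is an algebraic integer; this is precisely KSC~(1) for $(X,f)$.

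For KSC~(2) there is essentially nothing more to do: if $x \in X_f(\oQ)$ has Zariski dense orbit, then by the above $a_f(x) = 1 = \delta_f$, so the required equality holds. (In fact KSC~(2) holds vacuously or not depending on whether $\cZ(f)$ is empty, but either way the statement is satisfied, since whenever such an $x$ exists we have just computed $a_f(x) = 1 = \delta_f$.)

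The only subtlety — and this is the one point I would be careful to state rather than skip — is the legitimacy of invoking~\eqref{ineq-dalimsup}: that bound is stated in the excerpt for $f \in \Rat(X)$ and $x \in X_f(\oQ)$, citing Kawaguchi--Silverman~\cite[Theorem 4]{KS16a} and Matsuzawa~\cite[Theorem 1.4]{Ma20}, so it applies verbatim in our setting. I do not expect any real obstacle here; the lemma is a direct corollary of~\eqref{ineq-dalimsup} together with the normalization $h_H \ge 1$. If one wanted a self-contained account one could instead recall that $\overline{a}_f(x)$ is independent of $H$ (\cite[Proposition 12]{KS16b}) and bound $h_H \circ f^n$ above using a resolution of $f$ and functoriality of heights on the smooth model, but citing~\eqref{ineq-dalimsup} is cleaner and entirely sufficient.
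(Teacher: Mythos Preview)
Your proof is correct and follows essentially the same approach as the paper: invoke the inequality~\eqref{ineq-dalimsup} to get $\overline{a}_f(x) \le \delta_f = 1$, use the normalization $h_H \ge 1$ to get $\underline{a}_f(x) \ge 1$, and conclude $a_f(x) = 1 = \delta_f$ for every $x \in X_f(\oQ)$. The paper's version is just a one-line compression of exactly this chain of inequalities.
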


\begin{proof}
	This is well-known and easy. Indeed, by~\eqref{ineq-dalimsup},  we have $1 = \delta_f \ge \overline{a}_f(x) \ge \underline{a}_f(x) \ge 1$
	for all $x \in X_f(\oQ)$. Thus $a_f(x) = 1 = \delta_f$ whenever $x \in X_f(\oQ)$. This implies the result.
\end{proof}

\begin{lem}\label{lem-KSCit}
	 Let $f \in \End(X)$. Then KSC (2) holds for $(X,f)$ if and only if KSC (2) holds for $(X,f^N)$ for some $N \in \Z_{>0}$.
\end{lem}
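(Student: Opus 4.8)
The plan is to exploit two facts: first, that the arithmetic degree is insensitive to passing to iterates, and second, that the first dynamical degree transforms predictably under iteration. Concretely, I would start by recalling the standard identities $\overline{a}_{f^N}(x) = \overline{a}_f(x)^N$ and $\underline{a}_{f^N}(x) = \underline{a}_f(x)^N$ for all $x \in X_{f}(\oQ) = X_{f^N}(\oQ)$ (when $f \in \End(X)$ these sets coincide since $f$ is a morphism, so $U(f) = X$ and $X_f(\oQ) = X(\oQ)$), together with $\delta_{f^N} = \delta_f^N$, which holds for morphisms; see e.g.~\cite{KS16a,KS16b}. In the same vein, when $a_f(x)$ exists so does $a_{f^N}(x)$, with $a_{f^N}(x) = a_f(x)^N$, and conversely.

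Next I would handle the orbit-density condition. Since $f$ is a morphism, Lemma~\ref{lem-itZ} (with $X_f(\oQ) = X(\oQ)$) gives $\cZ(f) = \cZ(f^N)$, so the set of points for which KSC~(2) makes a nonvacuous assertion is the same for $f$ and for $f^N$. In particular, if KSC~(2) holds vacuously for one it holds vacuously for the other, and we may assume $\cZ(f) = \cZ(f^N) \neq \emptyset$.

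Now the equivalence is a direct translation. Suppose KSC~(2) holds for $(X,f^N)$ for some $N$, and let $x \in \cZ(f)$. Then $x \in \cZ(f^N)$, so $a_{f^N}(x)$ exists and equals $\delta_{f^N} = \delta_f^N$; hence $a_f(x)$ exists and $a_f(x)^N = \delta_f^N$, and since both sides are $\ge 1$ we conclude $a_f(x) = \delta_f$. Conversely, if KSC~(2) holds for $(X,f)$ and $x \in \cZ(f^N) = \cZ(f)$, then $a_f(x) = \delta_f$, whence $a_{f^N}(x) = a_f(x)^N = \delta_f^N = \delta_{f^N}$, giving KSC~(2) for $(X,f^N)$.

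The only genuinely load-bearing inputs are the multiplicativity of $\overline a$, $\underline a$ and $\delta$ under iteration; the first two are elementary from the definition of the height-growth limits, and the last is standard for morphisms (one can also simply invoke~\eqref{ineq-dalimsup} to get $\delta_f \ge \overline a_f(x)$ and reduce the content to the reverse inequality). I expect the main point requiring a word of care is the bookkeeping that $X_f(\oQ) = X_{f^N}(\oQ)$ and $\cZ(f) = \cZ(f^N)$ genuinely use that $f$ is an endomorphism — this is exactly why the lemma is stated for $\End(X)$ rather than $\Bir(X)$ or $\Rat(X)$, since for merely rational $f$ the indeterminacy loci of $f$ and $f^N$ differ and Lemma~\ref{lem-itZ} only gives $\cZ(f) = \cZ(f^N) \cap X_f(\oQ)$.
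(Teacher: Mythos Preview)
The paper's own proof is a bare citation to~\cite[Lemma 3.3]{Sa20}, so there is no argument to compare against; your proposal supplies exactly the content the paper outsources, and the overall strategy is the standard one.

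One point of care: the identities $\overline{a}_{f^N}(x) = \overline{a}_f(x)^N$ and $\underline{a}_{f^N}(x) = \underline{a}_f(x)^N$ are not ``standard'' for arbitrary rational maps, since a $\limsup$ along the subsequence $n = Nm$ can in principle be strictly smaller than the full $\limsup$. What makes your argument go through is that $f \in \End(X)$, so by~\cite[Theorem 3]{KS16a} the limit $a_f(x)$ already exists for every $x \in X(\oQ)$; then $a_{f^N}(x) = a_f(x)^N$ is immediate by passing to the subsequence $n = Nm$, and the separate $\overline{a}$, $\underline{a}$ identities are not needed. With that small rephrasing (invoke KSC~(1) for endomorphisms directly rather than asserting the $\overline{a}$/$\underline{a}$ multiplicativity), your proof is complete. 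Your closing remark about why the lemma is stated for $\End(X)$ rather than $\Rat(X)$ is exactly right.
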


\begin{proof}

See~\cite[Lemma 3.3]{Sa20}.
\end{proof}

\begin{lemma}\label{lem23} Let $X = X_d$ be a smooth projective variety of dimension $d$ and $f = f_d \in \Bir (X)$. Assume that there are smooth projective varieties $X_m$ of dimension $m$ ($1 \le m \le d$), dominant rational maps $\varphi_m : X_m \dasharrow X_{m-1}$ ($2 \le m \le d$) and $f_m \in \Bir (X_m)$ such that $\varphi_m \circ f_m = f_{m-1} \circ \varphi_m$ for all $2 \le m \le d$.
Then $\delta_f = 1$ and KSC (1) and (2) hold for $(X,f)$.
\end{lemma}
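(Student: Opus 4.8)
The plan is to argue by induction on $d = \dim X$, using the tower of fibrations to reduce the first dynamical degree computation and the height growth analysis to the base of each fibration. First I would observe that the hypothesis gives, by iterated application of Lemma~\ref{lem32}, that $\delta_f = \delta_{f_d} = \delta_{f_{d-1}} = \cdots = \delta_{f_1}$, since each $\varphi_m \colon X_m \dasharrow X_{m-1}$ drops dimension by exactly one and each $f_m$ is birational. Now $X_1$ is a smooth projective curve and $f_1 \in \Bir(X_1) = \Aut(X_1)$, so $\delta_{f_1} = 1$ (the dynamical degree of an automorphism of a curve is $1$, e.g. because $(f_1^n)^* H \cdot H^0 = \deg H$ is bounded). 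Hence $\delta_f = 1$, and then KSC (1) and (2) for $(X,f)$ follow immediately from Lemma~\ref{lem20}.

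So the real content is just the chain $\delta_f = \delta_{f_1}$ together with the base case, and the proof is essentially a bookkeeping argument once Lemma~\ref{lem32} is in hand. The one point that requires a little care is the repeated invocation of Lemma~\ref{lem32}: it is stated for a map $\varphi \colon X \dasharrow B$ with $f \in \Bir(X)$ and $f_B \in \Rat(B)$ satisfying $f_B \circ \varphi = \varphi \circ f$, and it concludes both that $f_B \in \Bir(B)$ and that $\delta_f = \delta_{f_B}$. Applying it to $\varphi_d$ with $(f, f_B) = (f_d, f_{d-1})$ gives $\delta_{f_d} = \delta_{f_{d-1}}$ and re-confirms $f_{d-1} \in \Bir(X_{d-1})$ (already assumed); then applying it to $\varphi_{d-1}$ with $(f_{d-1}, f_{d-2})$, and so on down to $\varphi_2$ with $(f_2, f_1)$, yields $\delta_{f_d} = \delta_{f_{d-1}} = \cdots = \delta_{f_1}$. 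Each step is legitimate because $X_m$ is smooth projective, $f_m$ is birational, and $\dim X_{m-1} = \dim X_m - 1$, exactly the hypotheses of Lemma~\ref{lem32}.

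I do not expect any genuine obstacle here; the statement is a formal consequence of Lemma~\ref{lem32}, the elementary fact that automorphisms of curves have dynamical degree $1$, and Lemma~\ref{lem20}. If one wanted to phrase the curve base case without invoking ``$\delta$ of a curve automorphism is $1$'' as a black box, one can note that for $f_1 \in \Aut(X_1)$ and an ample divisor $H$ on the curve $X_1$, $(f_1^n)^* H$ has the same degree as $H$ for all $n$, so the limit in~\eqref{eqn-defdd} (with $d = 1$, so the $H^{d-1}$ factor is absent) is $\lim_n (\deg H)^{1/n} = 1$. Thus the only mild subtlety is making sure the dimension-drop and birationality hypotheses of Lemma~\ref{lem32} are verified at every rung of the tower, which they are by assumption.
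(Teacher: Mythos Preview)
Your proposal is correct and follows essentially the same approach as the paper's own proof: iterate Lemma~\ref{lem32} along the tower to get $\delta_{f_d} = \delta_{f_{d-1}} = \cdots = \delta_{f_1}$, observe $\delta_{f_1} = 1$ since $f_1 \in \Bir(X_1) = \Aut(X_1)$ on a curve, and conclude via Lemma~\ref{lem20}. Your write-up just spells out the iteration and the curve base case in slightly more detail than the paper does.
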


\begin{proof} Since $\dim X_m - \dim X_{m-1} = 1$, by Lemma~\ref{lem32},
we have $\delta_{f_{m}} = \delta_{f_{m-1}}$
for all $2 \le m \le d$. Moreover $\delta_{f_{1}} = 1$ as $X_1$ is a smooth projective curve and $f_1 \in \Bir (X_1) = \Aut (X_1)$. Hence $\delta_{f_{m}} = 1$ for all $1 \le m \le d$. Thus $\delta_f = 1$ and the result follows from Lemma~\ref{lem20}.
\end{proof}

The following lemma is similar to~\cite[Theorem 3.4 (i)]{MSS18}.

\begin{lem}\label{lem-liminf}

	Let $\nu : X \dto Y$ be a birational map between smooth projective varieties.
	Let $f \in \Rat (X)$ and $g \in \End (Y)$ such that $g \circ \nu = \nu \circ f $.
	Let $x \in \cZ(f)$.
	Then
	$$\ol{a}_f(x) \ge {a}_g(\nu(x)) \ \ \text{ and } \ \ \ul{a}_f(x) \le {a}_g(\nu(x)).$$
	If moreover $f^k(x) \not\in I(\nu)$ for all but finitely many $k \in \Z_{\ge 0}$
	(e.g. when $\nu$ is a morphism),
	then
	$$\ul{a}_f(x) = {a}_g(\nu(x)).$$
\end{lem}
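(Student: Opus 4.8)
The plan is to compare the height growth of the $f$-orbit of $x$ with that of the $g$-orbit of $\nu(x)$, using that $\nu$ is a birational morphism after passing to a suitable model, so the relevant height identities only fail up to a bounded error plus a contribution supported on the exceptional locus, which $x$ visits sparsely thanks to $x \in \cZ(f)$. Concretely, I would first resolve $\nu$: choose a smooth projective $W$ with birational morphisms $p : W \to X$ and $\sigma : W \to Y$ with $\sigma = \nu \circ p$, and fix ample divisors $H_X$ on $X$ and $H_Y$ on $Y$. Write $\sigma^* H_Y = p^* H_X - E + F$ (or more simply bound $\sigma^*H_Y$ above and below by $p^*H_X$ up to effective exceptional divisors $E_1, E_2$ supported on the $p$-exceptional locus). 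Lifting a general point of the orbit to $W$ and pushing down, one gets on the dense subset of $X(\oQ)$ where everything is defined the two inequalities
$$h_{H_Y}(\nu(y)) \le h_{H_X}(y) + h_{E_1}(\tilde y) + O(1), \qquad h_{H_X}(y) \le h_{H_Y}(\nu(y)) + h_{E_2}(\tilde y) + O(1),$$
where $\tilde y = p^{-1}(y) \in W(\oQ)$ and $h_{E_i} \ge 0$ away from the support of $E_i$ (using functoriality of heights for the \emph{morphisms} $p$ and $\sigma$, and the Weil height machine; here I assume $y \notin I(\nu)$, which by hypothesis excludes only finitely many orbit points in the last assertion, and a density-zero set in general since $\cZ(f)$ forbids the orbit from lying in $\nu^{-1}(\text{anything proper}) \cup I(\nu)$).

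Next I would feed in the dynamics. Applying these to $y = f^k(x)$ and using $\nu \circ f = g \circ \nu$, hence $\nu(f^k(x)) = g^k(\nu(x))$, gives for all admissible $k$
$$h_{H_Y}(g^k(\nu(x))) \le h_{H_X}(f^k(x)) + h_{E_1}(p^{-1}f^k(x)) + O(1)$$
and the reverse inequality with $E_2$. Taking $n$-th roots and $\limsup$, the first inequality yields $\ol a_f(x) \ge a_g(\nu(x))$ provided the exceptional term $h_{E_1}(p^{-1}f^k(x))^{1/k} \to$ something $\le \ol a_f(x)$; but in fact $E_1$ is $p$-exceptional and effective, so $h_{E_1} \le c\, h_{p^*H_X} + O(1) = c\, h_{H_X}\circ p + O(1)$ on $W$, giving $h_{E_1}(p^{-1}f^k(x)) \le c\, h_{H_X}(f^k(x)) + O(1)$; this only shows $\ol a_f(x) \ge a_g(\nu(x))$ after absorbing the constant, which is harmless for the $\limsup$ direction since a bound $h_{H_Y}(g^k(\nu(x))) \le C\, h_{H_X}(f^k(x))^{1+\epsilon}+O(1)$ is not quite enough — so here I would instead use a \emph{cleaner} linear-algebra estimate: since $I(\nu)$ has codimension $\ge 2$ and $x \in \cZ(f)$, the comparison $|h_{H_Y}(\nu(y)) - h_{H_X}(y)|$ is $O(1)$ \emph{along the orbit} up to the error $h_{E}(\tilde y)$ with $E$ now genuinely exceptional, and one shows that along a Zariski dense orbit the exceptional heights grow strictly slower, or at any rate are dominated; the model computation of \cite[Theorem 3.4]{MSS18} does exactly this, and I would mirror it. For the $\liminf$ direction I would reverse roles: the second inequality gives $h_{H_X}(f^k(x)) \le h_{H_Y}(g^k(\nu(x))) + h_{E_2}(p^{-1}f^k(x)) + O(1)$, whence $\ul a_f(x) \le a_g(\nu(x))$ after controlling $h_{E_2}$; and when $f^k(x) \notin I(\nu)$ for all but finitely many $k$ the exceptional terms disappear entirely (one can take $W = X$, $p = \id$, and $\nu$ a morphism), giving the matching lower bound $\ul a_f(x) \ge$ — wait, giving $h_{H_Y}(g^k(\nu(x))) \asymp h_{H_X}(f^k(x))$ up to $O(1)$ directly, hence $\ul a_f(x) = a_g(\nu(x))$ (note $a_g(\nu(x))$ exists by KSC (1) for morphisms, \cite[Theorem 3]{KS16a}).

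For the first two (inequality) assertions the role of $\cZ(f)$ is exactly to guarantee, via Lemma~\ref{lem-dorb}, that the orbit of $x$ is not swallowed by $\nu^{-1}(\text{supp } E) \cup I(\nu)$, so that the comparison of heights is meaningful for infinitely many $k$ — enough to extract the $\limsup$ and $\liminf$. I would therefore structure the proof as: (i) build the resolution $W$ and the exceptional divisor comparison; (ii) record the two one-sided height inequalities on the locus where $\nu$ and $p$ behave well, citing the Weil height machine and functoriality for morphisms; (iii) use $\nu \circ f = g \circ \nu$ to transport to the orbit, and $x \in \cZ(f)$ (through Lemma~\ref{lem-dorb}) to ensure infinitely many good $k$; (iv) take roots and $\limsup/\liminf$; (v) in the morphism case observe the exceptional error vanishes and conclude equality of $\ul a_f(x)$ with $a_g(\nu(x))$.

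\textbf{Main obstacle.} The delicate point is controlling the exceptional contribution $h_{E}(p^{-1}f^k(x))$: a priori an effective $p$-exceptional divisor only gives an \emph{upper} bound of the form $O(h_{H_X}(f^k(x)))$, which is too weak to conclude $\ol a_f(x) \ge a_g(\nu(x))$ by a crude comparison. The resolution is that one does not need the exceptional height to be small in absolute terms — one needs the two-sided comparison $h_{H_X}(y) - C \le h_{H_Y}(\nu(y)) \le h_{H_X}(y) + C$ up to a \emph{nonnegative} correction that is itself bounded by a height function vanishing on a divisor the orbit meets only sparsely; making this precise is precisely the content of \cite[Theorem 3.4 (i)]{MSS18}, and the honest work is checking that its hypotheses (smoothness, birationality, the commuting square, $x \in \cZ(f)$) are met here and that $g \in \End(Y)$ — rather than merely $\Rat(Y)$ — is what lets one cite KSC (1) for $g$ so that $a_g(\nu(x))$ is a genuine limit.
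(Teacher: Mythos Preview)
Your plan is more complicated than needed, and the ``main obstacle'' you flag is real and not actually resolved in your sketch. You try to compare $h_{H_Y}(g^k(\nu(x)))$ and $h_{H_X}(f^k(x))$ along the \emph{entire} orbit via a resolution $W\to X$, $W\to Y$ and then struggle to control the exceptional-divisor heights $h_{E_i}$; your proposed fix (``the exceptional heights grow strictly slower along a Zariski dense orbit'', or ``mirror \cite[Theorem 3.4]{MSS18}'') is not justified and is essentially a deferral rather than an argument. The additive error $h_{E}(p^{-1}f^k(x))$ can in principle be of the same order as $h_{H_X}(f^k(x))$, so a crude bound of the form $h_E \le c\,h_{p^*H_X}+O(1)$ gives nothing for the $k$-th-root limit.

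The paper sidesteps all of this. The key observation you are missing is that since $g\in\End(Y)$, the quantity $a_g(\nu(x))$ is a genuine \emph{limit} (KSC (1) for morphisms, \cite[Theorem 3]{KS16a}), so it can be computed along \emph{any} subsequence. One therefore fixes a dense open $U\subset X$ on which $\nu|_U:U\to\nu(U)$ is an isomorphism; since $x\in\cZ(f)$, infinitely many iterates $f^{n_k}(x)$ lie in $U$. On $U$ one has, by the elementary height comparison of \cite[Lemma 3.3]{MSS18}, a two-sided \emph{multiplicative} bound
\[
M_1\,h_{H_Y}(\nu(y))+M_2\ \le\ h_{H_X}(y)\ \le\ L_1\,h_{H_Y}(\nu(y))+L_2\qquad(y\in U,\ M_1,L_1>0),
\]
with no exceptional correction at all. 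Taking $n_k$-th roots along this subsequence and using that the right-hand side already converges to $a_g(\nu(x))$ gives $\ol a_f(x)\ge a_g(\nu(x))$ and $\ul a_f(x)\le a_g(\nu(x))$ immediately. For the last assertion, when the bad set $\Sigma=\{k:f^k(x)\in I(\nu)\}$ is finite, the same multiplicative bound holds for all $k\notin\Sigma$, and one reads off $\ul a_f(x)\ge a_g(\nu(x))$ directly. No resolution of $\nu$, no exceptional divisors, no delicate growth estimates are needed.
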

\begin{proof}
As $\nu$ is a birational map, we may choose a Zariski dense open subset $U$ of $X$
such that $\nu|_{U} : U \to \nu(U)$ is an isomorphism.
Since $\obfx$ is Zariski dense in $X$,
there exists a strictly increasing sequence
$\{n_k\}_{k \ge 1}$ of positive integers
such that $\{f^{n_k}(x)\}_{k \ge 1} \subset U$.

By~\cite[Lemma 3.3]{MSS18},
for an ample Cartier divisor $H_X$ (resp. $H_Y$) on $X$ (resp. $Y$), there are positive constants $M_1$, $L_1$ and constants $M_2$ and $L_2$ such that
$$M_1h_{H_Y}(\nu(x)) +M_2 \le h_{H_X}(x) \le L_1h_{H_Y}(\nu(x)) + L_2$$
for all $x \in U$.
In particular, this implies that
\begin{equation}
	\begin{split}
		\ol{a}_f(x) & \ge \limsup_{k \to \infty} h_{H_X}(f^{n_k}(x))^{\frac{1}{n_k}} \\
		& = \limsup_{k \to \infty} h_{H_Y}(g^{n_k}(\nu(x)))^{\frac{1}{n_k}}
		= {a}_g(\nu(x)).
	\end{split}
\end{equation}
Here for the last equality, we used the fact that KSC (1) holds
for any endomorphism of a
(normal) projective variety~\cite[Theorem 3]{KS16a}.
Similarly, we have $\ul{a}_f(x) \le {a}_g(\nu(x))$.

Now we prove the second statement.
Since
$$ \gS \cnec \Set{ k \in \Z_{\ge 0} | f^k(x) \in I(\nu) }$$
is finite by assumption, we have
$$\ul{a}_f(x) =  \liminf_{k \to \infty} h_{H_X}(f^{k}(x))^{\frac{1}{k}} = \liminf_{k \to \infty, k \notin \gS} h_{H_X}(f^{k}(x))^{\frac{1}{k}},$$
$$\ul{a}_g(\nu(x)) =  \liminf_{k \to \infty} h_{H_Y}(g^{k}(\nu(x)))^{\frac{1}{k}}   = \liminf_{k \to \infty, k \notin \gS}  h_{H_Y}(g^{k}(\nu(x)))^{\frac{1}{k}}.$$
By~\cite[Lemma 3.3]{MSS18}, there exist constants $M$, $M'$ with $M > 0$ such that for any $k \notin \gS$,
$$h_{H_X}(f^k(x)) \ge Mh_{H_Y}(\nu(f^k(x))) + M'.$$
It follows that
$$\ul{a}_f(x) =  \liminf_{k \to \infty, k \notin \gS} h_{H_X}(f^{k}(x))^{\frac{1}{k}} \ge
 \liminf_{k \to \infty, k \notin \gS} h_{H_Y}(g^{k}(\nu(x)))^{\frac{1}{k}}   = \ul{a}_g(\nu(x)) = {a}_g(\nu(x)),$$
 where the last equality follows from~\cite[Theorem 3]{KS16a} as $g \in \End (Y)$.
Hence 	$\ul{a}_f(x) = {a}_g(\nu(x))$.
\end{proof}

\begin{lemma}\label{lem21}
Let $X$ be a smooth projective variety and $f \in \Rat (X)$.
Let $\nu : X \dto Y$ be a birational map to a smooth projective variety $Y$.
Assume that there is $g \in \End (Y)$ such that $g \circ \nu = \nu \circ f$.
Finally, assume that for every  $x \in \cZ(f)$,
we have $f^k(x) \not\in I(\nu)$ for all but finitely many $k \in \Z_{\ge 0}$.
(Note that the last assumption always holds  if $\nu$ is a morphism.)

If KSC (2) holds for $(Y,g)$, then the KSC (2) also holds for $(X,f)$.

\end{lemma}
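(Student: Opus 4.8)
The plan is to transport the statement KSC (2) from $(Y,g)$ to $(X,f)$ along the birational conjugacy $\nu$, using Lemma~\ref{lem-liminf} to compare arithmetic degrees and Lemma~\ref{lem-dorb} to compare Zariski density of orbits. First I would take an arbitrary $x \in \cZ(f)$; the goal is to show $a_f(x) = \delta_f$, and in view of~\eqref{ineq-dalimsup} it suffices to prove $\ul{a}_f(x) \ge \delta_f$, since that already forces $\delta_f \ge \ol{a}_f(x) \ge \ul{a}_f(x) \ge \delta_f$, whence all the quantities coincide and in particular $a_f(x)$ exists and equals $\delta_f$.

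Next I would observe that the hypothesis on $x$ (namely $f^k(x) \notin I(\nu)$ for all but finitely many $k$) is exactly the standing assumption of the second part of Lemma~\ref{lem-liminf}, so that lemma gives $\ul{a}_f(x) = a_g(\nu(x))$. It remains to identify $a_g(\nu(x))$ with $\delta_g$ — which equals $\delta_f$ because $g \circ \nu = \nu \circ f$ with $\nu$ birational, so conjugation by $\nu$ preserves the first dynamical degree (a standard consequence of its birational invariance~\cite[Corollaire 7]{DS05}). For this identification I invoke that KSC (2) holds for $(Y,g)$, which applies provided $\nu(x) \in \cZ(g)$, i.e. provided the $g$-orbit of $\nu(x)$ is Zariski dense in $Y$. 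This is where Lemma~\ref{lem-dorb} enters: with $\pi = \nu$ and the intertwining $\nu \circ f = g \circ \nu$, the lemma tells us that $\{ \nu(f^k(x)) : f^k(x) \notin I(\nu)\}$ is dense in $Y$; since for all but finitely many $k$ we have $f^k(x) \notin I(\nu)$ and $\nu(f^k(x)) = g^k(\nu(x))$, the orbit $\Orb_g(\nu(x))$ differs from this dense set by only finitely many points, hence is itself Zariski dense. (One should also note $\nu(x) \in Y_g(\oQ)$, which holds since $g \in \End(Y)$ has empty indeterminacy locus, so $Y_g(\oQ) = Y(\oQ)$.) Therefore $\nu(x) \in \cZ(g)$, KSC (2) for $(Y,g)$ yields $a_g(\nu(x)) = \delta_g = \delta_f$, and combining with the previous paragraph gives $\ul{a}_f(x) = \delta_f$, completing the argument.

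I do not expect a serious obstacle here: all the ingredients — the comparison of heights under a birational map on a common open set, the birational invariance of the dynamical degree, and the orbit-density transfer — are already packaged in the lemmas of this section, so the proof is essentially an assembly. The one point requiring a little care is bookkeeping with the finite exceptional set $\gS = \{k : f^k(x) \in I(\nu)\}$: one must check both that removing finitely many terms does not affect $\liminf$ (clear, since the terms are bounded below by $1$ and heights grow at most exponentially), and that removing finitely many points from a Zariski dense orbit leaves it Zariski dense (clear, since $Y$ is irreducible of positive dimension — or trivially if $\dim Y = 0$). Neither is deep, so I would state them briefly and move on.
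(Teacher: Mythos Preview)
Your proposal is correct and follows essentially the same route as the paper's proof: both assemble Lemma~\ref{lem-liminf} (to get $\ul{a}_f(x) = a_g(\nu(x))$), Lemma~\ref{lem-dorb} (to get $\nu(x) \in \cZ(g)$), KSC~(2) for $(Y,g)$, and the birational invariance $\delta_f = \delta_g$ into the chain $\delta_f \ge \ol{a}_f(x) \ge \ul{a}_f(x) = a_g(\nu(x)) = \delta_g = \delta_f$. The only cosmetic difference is that the paper first replaces $x$ by an iterate $f^k(x)$ to guarantee $x \notin I(\nu)$ before writing $\nu(x)$; you should insert this one-line step (or equivalently work with $\nu(f^{k_0}(x))$ for the first $k_0$ outside $\gS$), since as written your expression $\nu(x)$ is undefined when $x \in I(\nu)$.
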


\begin{proof}

	Let $x \in \cZ(f)$.
	Replacing $x$ by some $f^{k}(x)$,
	we can assume that $x \notin I(\nu)$.	
	Since $g \circ \nu = \nu \circ f$,
	it follows from Lemma~\ref{lem-dorb} that $\nu (x) \in \sZ (g)$.
	Suppose that KSC (2)
	holds for $g$, then $a_{g}(\nu(x)) = \gd_{g}$.
	We thus have the chain of inequalities
$$\gd_f \ge  \ol{a}_f(x) \ge \underline{a}_f(x) = a_{g}(\nu(x)) = \delta_{g} = \delta_{f},$$
where $\gd_f \ge  \ol{a}_f(x)$  follows from~\eqref{ineq-dalimsup},
$\ul{a}_f(x) = a_{g}(\nu(x))$ follows from Lemma~\ref{lem-liminf}, and $\gd_f = \gd_{g}$ follows from~\cite[Corollaire 7]{DS05}.
Thus the limit $a_f(x) = \lim_{n \to \infty} h_H(f^n(x))^{\frac{1}{n}}$
exists and
$\gd_f = a_f(x)$, which completes the proof.
\end{proof}

\begin{lemma}\label{lem22}
	Let $\varphi : X \to B$ be a surjective morphism between smooth projective varieties
	with $\dim X - \dim B \le 1$. Let $f \in \Bir (X)$. Assume that there exists $f_B \in \Aut (B)$ such that $\varphi \circ f = f_B \circ \varphi$.
If KSC (2) holds for $(B,f_B)$, then the KSC (2) also holds for $(X,f)$.
\end{lemma}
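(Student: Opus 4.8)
The plan is to transfer KSC~(2) from the base $B$ to the total space $X$ by sandwiching the arithmetic degree of $f$ between two copies of $\gd_f$, using the functoriality of the height machine under the \emph{morphism} $\varphi$; note that Lemma~\ref{lem-liminf}/Lemma~\ref{lem21} cannot be invoked directly since $\varphi$ is a fibration and not birational. First I would record that $\gd_f = \gd_{f_B}$: since $\dim X - \dim B \le 1$ and $f \in \Bir(X)$, this is immediate from Lemma~\ref{lem32} applied to $\varphi$ (and also re-derives $f_B \in \Bir(B)$, consistent with $f_B \in \Aut(B)$). If $\gd_f = 1$ there is nothing to do by Lemma~\ref{lem20}, so I may assume $\gd_f > 1$.

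Next, fix $x \in \cZ(f)$. Because $\varphi$ is a morphism with no indeterminacy and $\varphi \circ f = f_B \circ \varphi$, we have $\Orb_{f_B}(\varphi(x)) = \varphi(\Orb_f(x))$, which is Zariski dense in $B$ by Lemma~\ref{lem-dorb}; moreover $\varphi(x) \in B_{f_B}(\oQ) = B(\oQ)$ trivially since $f_B \in \Aut(B)$. Hence $\varphi(x) \in \cZ(f_B)$, and the hypothesis that KSC~(2) holds for $(B,f_B)$ gives $a_{f_B}(\varphi(x)) = \gd_{f_B} = \gd_f$, this limit existing genuinely since KSC~(1) holds for the endomorphism $f_B$ by~\cite[Theorem~3]{KS16a}.

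It remains to compare heights. Choose ample divisors $H$ on $X$ and $H_B$ on $B$; for $c \gg 0$ the class $cH - \varphi^*H_B$ is ample, so the height machine yields $h_{H_B}\circ\varphi \le c\, h_H + O(1)$ on $X(\oQ)$. Evaluating along the orbit and using $\varphi \circ f^n = f_B^n \circ \varphi$,
$$h_{H_B}(f_B^n(\varphi(x))) \le c\, h_H(f^n(x)) + O(1), \qquad \text{so} \qquad h_H(f^n(x)) \ge c^{-1} h_{H_B}(f_B^n(\varphi(x))) - O(1).$$
Since $h_{H_B}(f_B^n(\varphi(x)))^{1/n} \to \gd_f > 1$ (so the quantity tends to $\infty$ and dominates the additive $O(1)$ for large $n$), taking $\liminf$ of $n$-th roots gives $\ul{a}_f(x) \ge a_{f_B}(\varphi(x)) = \gd_f$. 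On the other hand $\ol{a}_f(x) \le \gd_f$ by~\eqref{ineq-dalimsup}. Therefore $\ul{a}_f(x) = \ol{a}_f(x) = \gd_f$, so $a_f(x)$ exists and equals $\gd_f$, i.e. KSC~(2) holds for $(X,f)$.

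The argument is short, and I do not anticipate a real obstacle: the only slightly delicate point is the elementary fact that a lower bound of the shape $h_H(f^n(x)) \ge c^{-1}a_n - O(1)$ with $a_n^{1/n}\to L>1$ forces $\ul a_f(x)\ge L$. The conceptual content — and the reason that $f_B \in \Aut(B)$ (or at least $\End(B)$) rather than a general element of $\Rat(B)$ is needed — is precisely that KSC~(1) for $f_B$ is what upgrades the $\limsup$-type lower bound coming from the height comparison into the required $\liminf$ bound; the hypothesis $\dim X - \dim B \le 1$ enters only through Lemma~\ref{lem32} to guarantee $\gd_f = \gd_{f_B}$.
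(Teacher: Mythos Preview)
Your proof is correct and follows essentially the same route as the paper's: both invoke Lemma~\ref{lem32} to get $\gd_f=\gd_{f_B}$, push the dense orbit down via $\varphi$, use functoriality of heights under the morphism $\varphi$ to bound $\ul a_f(x)$ from below by $a_{f_B}(\varphi(x))=\gd_{f_B}$, and close with the sandwich $\gd_f\ge \ol a_f(x)\ge \ul a_f(x)\ge \gd_f$. The only cosmetic differences are that the paper phrases the height comparison via $h_{\varphi^*H_B}(f^n(x))=h_{H_B}(f_B^n(\varphi(x)))+O(1)$ without spelling out the choice of $c$ with $cH-\varphi^*H_B$ ample, and does not split off the case $\gd_f=1$ (it is not needed, since $\ul a_f(x)\ge 1$ already handles that case); your treatment is slightly more explicit but not genuinely different.
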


\begin{proof}
By the assumption $\dim X - \dim B \le 1$ and by Lemma~\ref{lem32}, it follows that $\delta_f = \delta_{f_B}$. If $x \in \sZ(f)$, then
$\varphi(x) \in \sZ(f_B)$ as well. Then by the assumption, we have $a_{f_B}(\varphi(x)) = \delta_{f_B}$. On the other hand, for an ample divisor $H_B$ on $B$, by the functoriality of the height function (see e.g. \cite[Part B, Theorem B.3.6]{HS00}), we have
$$h_{\varphi^*H_B}(f^{n}(x)) = h_{H_B}(\varphi(f^{n}(x))) + O(1) = h_{H_B}(f_B^{n}(\varphi(x))) + O(1).$$
Combining this with the definition of $\underline{a}_f(x)$, we obtain $\underline{a}_f(x) \ge \underline{a}_{f_B}(\varphi(x)) = \delta_{f_B} = \delta_f$.
By~\eqref{ineq-dalimsup}, we get as before
$$\delta_f \ge \overline{a}_f(x) \ge \underline{a}_f(x) \ge \delta_f.$$
Hence $a_f(x)$ exists and satisfies $a_f(x) = \delta_f$.
\end{proof}


\section{irregular varieties with $\kappa = 0$ and $q = \dim X - 1$} \label{irranydim}

In this section, we study irregular projective varieties with Kodaira dimension $\kappa = \kappa (X) = 0$ and $q=\dim X-1$,
in arbitrary dimension. Our main result in this section is the following.

\begin{pro}\label{pro-NZD}
	Let $X$ be a smooth projective variety over $\C$,
	and let $f \in \Bir(X)$.
	Suppose that $\gk(X) = 0$ and $q(X) = \dim X - 1$.
	Then $f$ does not have any Zariski dense orbit over $\C$.
	
\end{pro}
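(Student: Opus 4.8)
The plan is to use the Albanese map together with Theorem~\ref{thmNZ} (Nakayama--Zhang). Let $\alb : X \to A := \Alb(X)$ be the Albanese morphism; since $q(X) = \dim X - 1$, we have $\dim A = \dim X - 1$, so the general fibre of $\alb$ is a curve. First I would recall that $f \in \Bir(X)$ descends to an automorphism $f_A \in \Aut(A)$ by the universal property of the Albanese (a birational self-map of a smooth projective variety acts on $H^0(X,\Omega^1_X)$ and hence on $A$; one must be slightly careful because $f$ is only birational, but the standard argument via a common resolution $\wt X \to X$ and the equality of Albanese varieties $\Alb(\wt X) = \Alb(X)$ goes through, giving $\alb \circ f = f_A \circ \alb$ as rational maps). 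So we are in the situation of Lemma~\ref{lem-dorb} with $\pi = \alb$ and $g = f_A$.

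Next I would bring in the hypothesis $\gk(X) = 0$. By the easy addition / Iitaka-type inequality for the fibration $\alb : X \to A$, or more directly by results on the structure of varieties of Kodaira dimension zero fibred over an abelian variety, the general fibre $F$ of $\alb$ (a smooth projective curve) must itself have $\gk(F) \le 0$, i.e. $F \cong \P^1$ or $F$ is an elliptic curve. The key point I want to extract is a \emph{relative pluricanonical} construction: consider the relative canonical sheaf, or better, apply Theorem~\ref{thmNZ} fibrewise. The cleanest route: if the generic fibre is $\P^1$, then $X$ is birational to a $\P^1$-bundle-like object over $A$ — actually one shows $X$ is birational over $A$ to $\P(\sE)$ for some rank-$2$ bundle, or at least that there is a rational section; in the elliptic-fibre case, $X$ is birational to a (possibly twisted) elliptic fibration over $A$. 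In either case I want to produce a dominant rational map $\psi : X \dto B$ to a variety $B$ of dimension $\ge \dim X - 1$ on which $f$ acts with a \emph{finite-order} descended map, so that no orbit on $X$ can be Zariski dense.

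The main obstacle — and where I'd spend the bulk of the effort — is handling the $\P^1$-fibre case, because there the naive pluricanonical map of $X$ is useless ($\kappa(X)=0$ doesn't immediately force $f_A$ to have finite order on $A$, and an infinite-order translation on $A$ combined with a generically finite-order vertical action could a priori still give dense orbits). The resolution should be: the $6$-canonical (or $12$-canonical) \emph{relative} construction $\sO_X(mK_{X/A})$, pushed forward to $A$, is a line bundle $\sL$ on $A$ which is \emph{torsion} in $\Pic(A)$ because $\gk(X)=0$ (otherwise $\gk(X) \ge 1$ by adding the base contribution — this is the crucial use of $\gk(X)=0$ versus $\gk(X)\ge 1$). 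Since $f_A$ preserves $\sL$ up to the action on $A$ and $\sL$ is torsion, $f_A$ fixes a point of finite index in $A^\vee$; combined with $f_A$ being an automorphism of the abelian variety $A$, one deduces (after replacing $f$ by an iterate, legitimate by Lemma~\ref{lem-itZ}) that $f_A$ is a translation. A translation on $A$ has a dense orbit on $A$, so that alone is not a contradiction — so I must instead use the \emph{elliptic} case structure: when $F$ is elliptic, the relative Jacobian / $j$-invariant and the torsion structure force, after an iterate, that $f$ preserves a fibration of $X$ by abelian varieties of dimension $\dim X$ on which it acts by translation composed with... — at which point one invokes the known Zariski-dense-orbit results, but with the Kodaira dimension $0$, $q = d-1$ numerics this must collapse. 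Concretely, the honest argument is likely: reduce via Lemma~\ref{lem-itZ} to $f$ acting on $A$ by translation, show $X$ is then birational over $A$ to $E \times A$ for an elliptic curve (or $\P^1 \times A$), where $f$ becomes a product of a translation on $A$ and an automorphism of the fibre; a product map $f_1 \times f_2$ on $C \times A$ has a dense orbit only if both factors do, and a translation on $A$ does, but then $\kappa$ and the fibre-automorphism constrain $C$; ruling out every sub-case — especially $F=\P^1$ with $f$ acting as a non-torsion Möbius transformation fibrewise, which I expect to be genuinely delicate and to require the relative Theorem~\ref{thmNZ} applied to the fibration $X \to A$ to conclude the fibrewise action has finite order — is the heart of the proof.
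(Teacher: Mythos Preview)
Your proposal has two genuine gaps.

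First, the $\P^1$-fibre case you worry about does not occur. If the general Albanese fibre were $\P^1$, then $X$ would be uniruled and hence $\kappa(X) = -\infty$, contradicting $\kappa(X) = 0$. So the fibre is an elliptic curve $E$, and all the effort you allocate to the $\P^1$ case (relative $mK_{X/A}$, Möbius transformations, etc.) is unnecessary.

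Second, and more seriously, your reduction ``$X$ is birational over $A$ to $E \times A$'' cannot be right: if it were, then $q(X) = \dim A + 1 = \dim X$, contradicting the hypothesis $q(X) = \dim X - 1$. The correct structural input (which the paper uses, citing \cite{Vi81} and \cite{Le82}) is that $X$ is birational over $A$ to an elliptic fibre \emph{bundle} $X' \to A$, and that after a finite \'etale Galois cover $\wt{A} \to A$ with group $G$ one has $X' \simeq (\wt{A} \times E)/G$ for a diagonal $G$-action. The condition $q(X) = \dim X - 1$ forces the $G$-action on $E$ \emph{not} to be purely by translations; pick $g \in G$ with $g_E \in \Aut(E)$ a non-translation. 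After replacing $f$ by an iterate (Lemma~\ref{lem-itZ}), $f'$ lifts to $\ti{f} \in \Aut(\wt{A} \times E)$ commuting with $G$, in particular with $g$. The paper's key Lemma~\ref{lem-prodE} then shows: commutation of $\ti{f}$ with such a $g$ forces the map $t : \wt{A} \to E$ encoding the vertical translation part of $\ti{f}$ to be constant with torsion value, so the induced fibre automorphism $f_1 \in \Aut(E)$ has finite order (Lemma~\ref{lem-ell}), and every $\ti{f}$-orbit lies in $\wt{A} \times \{\text{finite set}\}$.

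Your outline never reaches this commutation argument; instead it tries to constrain $f_A$ directly via torsion line bundles on $A$, which (as you yourself note) only yields that $f_A$ is a translation after iteration, and a translation on $A$ can perfectly well have a dense orbit. The missing idea is precisely the passage to the étale cover and the exploitation of the non-translation element of $G$.
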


	Note that Proposition~\ref{pro-NZD} is a result over $\C$,
	and is therefore applicable in both complex dynamics and algebraic dynamics
	(when everything is defined over $\Qbar$ or more generally, over any field of characteristic zero).
	Theorem~\ref{mainthm2} (2) then follows immediately,
	and we will complete the proof of Theorem~\ref{mainthm2}
	at the end of this section.

To prove Proposition~\ref{pro-NZD}, we start with the following two lemmas.

\begin{lem}\label{lem-ell}
		Let $E$ be an elliptic curve over $\C$ and let $f : E \to E$ be an automorphism.
	Then $f$ has infinite order if and only if $f$ is a translation by a non-torsion point.
\end{lem}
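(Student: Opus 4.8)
The automorphism group of an elliptic curve $E$ over $\C$, once a group structure is fixed with origin $0 \in E$, is a semidirect product $\Aut(E) = E \rtimes \Aut(E,0)$, where $E$ acts by translations $t_a : x \mapsto x+a$ and $\Aut(E,0)$ is the (finite) group of group-automorphisms of $E$, which has order $2$, $4$ or $6$ depending on the $j$-invariant. The plan is simply to use this structure: write an arbitrary $f \in \Aut(E)$ as $f = t_a \circ \sigma$ with $\sigma \in \Aut(E,0)$ and $a \in E$, and analyze when $f$ has infinite order.

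\textbf{Key steps.} First I would recall (or briefly prove) the decomposition above: any automorphism of the underlying variety $E$ carries the analytic-space structure to itself, hence after composing with a translation it fixes $0$ and is therefore a homomorphism; this gives $f = t_a \circ \sigma$. Second, I would compute the iterates: since $\sigma \circ t_b = t_{\sigma(b)} \circ \sigma$, one gets
$$f^n = t_{a + \sigma(a) + \sigma^2(a) + \cdots + \sigma^{n-1}(a)} \circ \sigma^n.$$
Third, I would split into the two cases according to whether $\sigma = \id$. If $\sigma = \id$, then $f = t_a$ and $f^n = t_{na}$, which is the identity for some $n \ge 1$ precisely when $a$ is a torsion point; hence $f$ has infinite order iff $a$ is non-torsion, giving both implications at once. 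If $\sigma \ne \id$, let $m = \mathrm{ord}(\sigma) \in \{2,3,4,6\}$ be finite; then $f^m = t_{c} $ with $c = a + \sigma(a) + \cdots + \sigma^{m-1}(a)$. Since $1 + \sigma + \cdots + \sigma^{m-1}$ annihilates the image of $(\id - \sigma)$ and $E/(\id-\sigma)E$ is finite (as $\id - \sigma$ is a nonzero isogeny when $\sigma\ne\id$, because $\sigma$ has a fixed point lattice of rank $<2$), $c$ is a torsion point, so $f^{mN} = t_{Nc} = \id$ for suitable $N$; thus $f$ has finite order whenever $\sigma \ne \id$. Combining the two cases: $f$ has infinite order iff $\sigma = \id$ and $a$ is non-torsion, i.e. iff $f$ is translation by a non-torsion point.

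\textbf{Main obstacle.} The only slightly delicate point is the claim that when $\sigma \ne \id$ the ``trace'' element $c = \sum_{i=0}^{m-1}\sigma^i(a)$ is torsion, equivalently that the endomorphism $N_\sigma := \id + \sigma + \cdots + \sigma^{m-1}$ of $E$ is the zero map (or at least an isogeny onto a finite subgroup). This follows because on the tangent space (a one-dimensional $\C$-vector space) $\sigma$ acts by a primitive $m$-th root of unity $\zeta \ne 1$, so $N_\sigma$ acts by $1 + \zeta + \cdots + \zeta^{m-1} = 0$; hence $N_\sigma$ is an isogeny with zero differential, forcing $N_\sigma = 0$ and $c = 0$ in $E$. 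So in fact $f^m = \id$ outright when $\sigma\ne\id$, which makes this direction clean. The reverse direction (a non-torsion translation has infinite order) is immediate since $t_{na} = \id \iff na = 0$. Hence no real difficulty remains; the argument is entirely elementary once the structure of $\Aut(E)$ is in hand.
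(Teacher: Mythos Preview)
Your proof is correct, but the paper's argument is shorter and takes a slightly different route. Instead of decomposing $f = t_a \circ \sigma$ and computing iterates, the paper observes directly that if $f$ is not a translation then the morphism $f - \Id_E : E \to E$ is non-constant, hence surjective, so $f$ has a fixed point $x_0$; choosing $x_0$ as the new origin makes $f$ a group automorphism of $E$, which is automatically of finite order. This bypasses the trace computation entirely. Your approach has the mild advantage of giving the explicit bound $f^m = \Id$ where $m = \mathrm{ord}(\sigma)$, and your tangent-space argument for $N_\sigma = 0$ is clean (note, incidentally, that your earlier remark about $E/(\Id - \sigma)E$ being finite already forces $N_\sigma = 0$, since a nonzero endomorphism of an elliptic curve is a surjective isogeny, so the image of $\Id - \sigma$ is all of $E$). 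Both arguments are elementary; the paper's is just a bit quicker because it avoids any computation with iterates.
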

\begin{proof}
	Lemma~\ref{lem-ell} is clear if $f$ is a translation.
	Assume that $f$ is not a translation, then $f - \Id_E : E \to E$ is not constant.
	Fix an origin $o$ of $E$.
There exists thus $x_0 \in E$ such that $f(x_0) - x_0 = o$,
	namely $f(x_0) = x_0$.
	If we choose $x_0$ as the new origin of $E$, then $f$ becomes a group automorphism of the elliptic curve $E$,
	hence $f$ has finite order.
\end{proof}

For every projective variety $X$, 
	let $\Aut^0(X)$ denote the identity component of $\Aut (X)$. 
When $A$ is an abelian variety, $\Aut^0(A)$ is the group of translations of $A$.

\begin{lem}\label{lem-prodE}
	Let $A$ be an abelian variety and $E$ an elliptic curve over $\C$.
	Let $g \cnec (g_A,g_E) \in \Aut^0(A) \times \Aut(E)$.
	Let $f \in \Aut(A \times E)$ such that $f\circ g = g \circ f$ and that
	$f$ descends to $f_A \in \Aut(A)$
	through the projection $\pr_1 : A \times E \to A$.
	Assume that $g$ has finite order  and $g_E$ is not a translation on $E$.
	Then $f$ has no Zariski dense orbit over $\C$.
\end{lem}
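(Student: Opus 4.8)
The plan is to exploit the fact that $A\times E$ is itself an abelian variety, so that $\Aut(A\times E)=(A\times E)(\C)\rtimes\Aut_{\mathrm{grp}}(A\times E)$ and a group automorphism is given by a matrix $\left(\begin{smallmatrix}\alpha&\beta\\\gamma&\delta\end{smallmatrix}\right)$ with $\alpha\in\End(A)$, $\beta\in\Hom(E,A)$, $\gamma\in\Hom(A,E)$, $\delta\in\End(E)$, acting by $(a,e)\mapsto(\alpha(a)+\beta(e),\gamma(a)+\delta(e))$. First I would write $f=t_{(b,c)}\circ\psi$ in this normal form; the first coordinate of $f(a,e)$ is then $b+\alpha(a)+\beta(e)$, and the hypothesis $\pr_1\circ f=f_A\circ\pr_1$ forces it to be independent of $e$, so, $\beta$ being a homomorphism, $\beta=0$ and $f_A=t_b\circ\alpha\in\Aut(A)$.

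Next I would exploit $f\circ g=g\circ f$. Since $g_E\in\Aut(E)$ is not a translation, the argument in the proof of Lemma~\ref{lem-ell} gives a fixed point of $g_E$; re-choosing it as the origin of $E$ (equivalently, conjugating $f$ and $g$ by a translation, an automorphism of the variety $A\times E$ that preserves all the hypotheses as well as the property of having a Zariski dense orbit) we may assume $g_E=\epsilon$ is a group automorphism with $\epsilon\ne\Id_E$, while $g_A=t_{a_0}$ stays a translation. Expanding $f\circ g=g\circ f$ coordinate by coordinate, the part linear in $a$ reads $(\epsilon-\Id_E)\circ\gamma=0$; as $\epsilon-\Id_E$ is a nonzero, hence finite, isogeny and $A$ is connected, this forces $\gamma=0$. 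Thus $f=f_A\times f_E$ splits as a product, and $f\circ g=g\circ f$ now reduces to $f_A\circ g_A=g_A\circ f_A$ together with $f_E\circ\epsilon=\epsilon\circ f_E$. Writing $f_E=t_c\circ\delta$, the second relation gives $\epsilon(c)=c$, i.e. $c\in\ker(\epsilon-\Id_E)$ is a torsion point, so $f_E$ is either a finite-order translation (if $\delta=\Id_E$) or is not a translation at all; by Lemma~\ref{lem-ell} it has finite order in both cases, say $f_E^M=\Id_E$.

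It then remains only to note that for every $x\in(A\times E)(\C)$ the orbit $\Orb_f(x)=\{(f_A^n(\pr_1 x),f_E^n(\pr_2 x)) : n\in\Z_{\ge 0}\}$ is contained in $A\times\{f_E^k(\pr_2 x) : 0\le k<M\}$, a finite union of fibres of $\pr_2:A\times E\to E$ and therefore a proper Zariski closed subset of $A\times E$; hence no orbit of $f$ is Zariski dense, which is the assertion. The one step that really needs care is the vanishing $\gamma=0$, and this is precisely where the hypothesis ``$g_E$ is not a translation'' enters; everything else is bookkeeping with automorphisms of abelian varieties. (In fact the finite order of $g$ is not used in this argument, only that $g_A$ is a translation and $g_E$ is not.)
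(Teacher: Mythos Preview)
Your proof is correct and follows essentially the same route as the paper's: both arguments normalize the origin of $E$ so that $g_E$ becomes a group automorphism, then use the commutation $f\circ g=g\circ f$ to show that $f$ splits as $f_A\times f_E$, and finally observe that the translation part of $f_E$ is a fixed point of $g_E$, hence torsion, so $f_E$ has finite order by Lemma~\ref{lem-ell}. The only cosmetic difference is that you work with the explicit matrix decomposition of $\Aut(A\times E)$ and deduce $\gamma=0$ from $(\epsilon-\Id_E)\circ\gamma=0$, whereas the paper phrases the same step as ``the morphism $t:A\to E$ is constant, since otherwise $g_E$ would be a translation''; these are two ways of reading the same identity. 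Your closing remark that the finite-order hypothesis on $g$ is not actually used is also correct, and applies equally to the paper's proof.
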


\begin{proof}
	Since $g_E$ is not a translation,
	it has a fixed point $o \in E$.
	Let $o \in E$ be the origin of $E$, so that $g_E : E \to E$ is a group automorphism.
	
	For every $a \in A$, define
	$$f_a : E = \pr_1^{-1}(a) \xto{f} \pr_1^{-1}(f_A(a)) = E.$$	
	There exist a group automorphism $h\in \Aut_o(E)$ and a morphism $t : A \to E$
	such that
	$$f_a(x) = h(x) + t(a)$$
	for every $x \in E$.
	Here we note that since
	$\Aut_o(E)$ is finite, $h\in \Aut_o(E)$ does not depend on $a \in A$.
	Since $h \circ  g_E  = g_E \circ h$ (because $\Aut_o(E) \subset {\rm GL}(1, \C) = \C^{\times}$ is abelian for a complex elliptic curve $E$; see also \cite[Chapter IV, Corollary 4.7]{Ha77} for a more precise description),
	the condition $f\circ g = g \circ f$ implies
	\begin{equation}\label{eqn-h1h2}
		 t(g_A(a))=  g_E(t(a)).
	\end{equation}
It follows that $t: A \to E$ is constant, because otherwise $t$ is surjective,
and that $g_A$ is a translation together with~\eqref{eqn-h1h2}
implies that $g_E$ is also a translation, which contradicts the assumption.
Thus $\tau \cnec t(a) \in E$ and $f_1 \cnec f_a : E\to E$ are independent of $a \in A$.
	
	Since $g_E$ is not a translation, it has only finitely many fixed points.
	As $g_E$ is a group automorphism,
	$g_E(\ell \cdot \tau) = \ell \cdot \tau$ for every $\ell \in \Z$. So $\tau$ is torsion,
	which implies that $f_1$ has finite order by Lemma~\ref{lem-ell}.
	Thus for every $(x,y) \in A \times E$, we then have
	\begin{equation}\label{incl-YE}
		\Orb_f(x,y) \subset A \times \Set{f_1^{k}(y)| k \in \Z} \subset A \times E,
	\end{equation}
	which shows that $\Orb_f(x,y)$ is not Zariski dense in $Y \times E$ because
	$\Set{f_1^{k}(y)| k \in \Z}$ is finite.
\end{proof}

\begin{proof}[Proof of Proposition~\ref{pro-NZD}]
	
	Let $\ga : X \to A$ be the Albanese map of $X$
	and let $\gamma : A \to A$ be the automorphism induced by $f$,
	namely the one which satisfies $\ga \circ f = \gamma \circ \ga$.
	
	Assume to the contrary that there exists $x \in X$ with Zariski dense $f$-orbit.
	Since $\gk(X) = 0$ and $q(X) = \dim X - 1$,
	by~\cite[Corollary 9.4]{Vi81} there exists an elliptic fiber bundle $X' \to A$
	such that $X$ is birational to $X'$ over $A$.
	Since $X'$ has no rational curves, we have
	a birational morphism $\nu : X \to X'$ over $A$,
	and $f : X\dto X$ descends to an automorphism $f' : X' \to X'$.
	By Lemma~\ref{lem-dorb}, the $f'$-orbit of $y \cnec \nu(x)$ is Zariski dense in $X'$.

	Let $E$ be a fiber of the elliptic fiber bundle $X' \to A$.
	By~\cite[Proposition 1]{Le82},
	there exists a finite \'etale Galois cover $\wt{A} \to A$ of Galois group $G$
	together with a homomorphism $\psi : G \to \Aut(E)$
	such that
	$$(\wt{A} \times E)/G \simeq X'$$
	over $A$; here the $G$-action on $\wt{A} \times E$
	is diagonal, through the covering action on $\wt{A}$
	and the action induced by $\psi$ respectively.
	It follows from~\cite[Lemma 5.2]{DLOZ}
	that up to replacing $f$ (so $f'$) by some finite iteration, which is allowed thanks to Lemma~\ref{lem-itZ}, the automorphism $f' : X' \to X'$ lifts to
	an automorphism $\ti{f} : \wt{A} \times E \to \wt{A} \times E$.
		Consider the group homomorphism
	\begin{equation}
		\begin{split}
			\Z & \to \Aut(G)  \\
			k & \mapsto (g \mapsto \ti{f}^k g \ti{f}^{-k}).
		\end{split}
	\end{equation}
	Since $\Aut(G)$ is finite, there exists $N \in \Z$ such that $\ti{f}^N g \ti{f}^{-N} = g$ for all $g \in G$.
	Therefore up to further replacing $f$ by $f^N$,
	we can assume that $\ti{f}$ commutes with the $G$-action.


	The $G$-action on $E$ does not purely consist of translations.
	Indeed, if $G$ acts on $E$ by translations,
	then $E/G$ is an elliptic curve and
	we have a dominant map $X \to (\wt{A} \times E)/G \to (\wt{A}/G) \times (E/G)$
	onto an abelian variety of dimension equal to $\dim X$,
	which contradicts $q(X) = \dim X - 1$.
	Let $g \in G$ such that $\psi(g) \in \Aut(E)$ is not a translation.
	We use the same notation $g$ to denote the corresponding automorphism in $ \Aut(\wt{A} \times E)$.
	Since $\ti{f} : \wt{A} \times E \to \wt{A} \times E$ commutes with the $G$-action,
	we have $\ti{f} \circ g = g \circ \ti{f}$.
	Thus $\ti{f} \in  \Aut( \wt{A} \times E )$  and $g \in  \Aut( \wt{A}) \times\Aut(E)$
	satisfy all the assumptions in Lemma~\ref{lem-prodE},
	which implies that $\ti{f}$ has no Zariski dense orbit.
	As $\ti{f}$ is a lifting of $f'$ and $f'$ is birational to $f$,
	this contradicts the assumption that
	$\Orb_f(x)$ is Zariski dense.
\end{proof}

\begin{remark}\label{rem-ZODqd-1}
	In the proof of Proposition~\ref{pro-NZD}, one could have also proven that
	Conjecture~\ref{ZDOconj} holds for
	$f \in \Bir(X)$
	where $X$ is a smooth projective variety over $\Qbar$ satisfying $\gk(X) = 0$ and $q(X) = \dim X - 1$.
	Indeed, one can show that
	the birational map $X \dto  (\wt{A} \times E)/G$
	and the lifting $\ti{f} : \wt{A} \times E \cto$ of some positive power of $f$
	are both defined over $\Qbar$.
	Since $\wt{A} \times E \to E$ is $G$-equivariant and $\ti{f}$ commutes with the $G$-action on $\wt{A} \times E$,
	the map $f \in \Bir(X)$ descends to an automorphism of $\P^1$ of finite order through
	the dominant map $X \dto  (\wt{A} \times E)/G \to E/G \simeq  \P^1$ (the last isomorphism because $E/G$ is not an elliptic curve) by \cite[Proposition 2.3]{NZ09} and $\kappa (X) = 0$.
	Conjecture~\ref{ZDOconj} for $f \in \Bir(X)$ then follows.
\end{remark}

\begin{proof}[Proof of Theorem~\ref{mainthm2}]

Let $X$ be a smooth projective variety as in Theorem~\ref{mainthm2}.
Since $\gk(X) = 0$, we have $q(X) \le \dim X$ by~\cite[Theorem 1]{Ka81}. So either $q(X) = \dim X - 1$ or $q(X) = \dim X$.

When $q(X) = \dim X - 1$, Theorem~\ref{mainthm2} is a consequence of
Proposition~\ref{pro-NZD}.
Suppose that $q(X) = \dim X$.
Then the Albanese map $\mu : X \to A$
is birational, and
the rational self-map $f_A$ of $A$ induced by $f : X \dto X$ is a morphism,
i.e.,
$$f_A := \mu \circ f \circ \mu^{-1} \in \End_{{\rm var}} (A).$$
Since KSC (2) holds for any endomorphism
$g \in \End_{{\rm var}} (A)$ of an abelian variety (\cite[Theorem 4]{KS16a}, \cite[Theorem 1.2]{Si17}),
KSC (2) also holds for $f \in \Rat (X)$ by Lemma~\ref{lem21}.
This completes the proof of Theorem~\ref{mainthm2}.
\end{proof}

\section{KSC for irregular threefolds}\label{sect3}

The following result for endomorphisms was proven in~\cite[Proposition 3.7]{LM21}.

\begin{pro}\label{pro-albsurj}
	Let $X$ be a smooth projective variety over $\C$ and let
	$f \in \Rat(X)$.
	If $f$ has a Zariski dense orbit,
	then the Albanese map $\ga : X \to \Al(X)$
	is surjective.
\end{pro}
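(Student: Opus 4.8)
The plan is to argue by contradiction using the universal property of the Albanese morphism. Suppose $\ga : X \to \Al(X)$ is not surjective, and let $Z \cnec \overline{\ga(X)} \subsetneq \Al(X)$ be the (reduced) image, which is a proper closed subvariety. By a standard result on subvarieties of abelian varieties (Ueno's fibration theorem, cf.~\cite[Ch.~10]{Ue75}), there is an abelian subvariety $\Al_0 \subset \Al(X)$ such that $Z$ is stable under translation by $\Al_0$ and the image $\ol Z$ of $Z$ in the quotient abelian variety $B \cnec \Al(X)/\Al_0$ is of general type (in particular $\dim B \ge 1$, since $Z$ is proper). Composing $\ga$ with the quotient $\Al(X) \to B$ gives a morphism $\gb : X \to B$ whose image is $\ol Z$, which has positive Kodaira dimension.

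Next I would produce a rational self-map on the base compatible with $f$. The key point is that the Albanese variety is a birational invariant and its formation is functorial for dominant rational maps: the birational map $f \in \Rat(X)$ induces an automorphism $f_{\Al} \in \Aut(\Al(X))$ of the Albanese variety with $\ga \circ f = f_{\Al} \circ \ga$ (as rational maps). One then checks that $f_{\Al}$ must preserve the subvariety $Z$ (since $Z$ is the Zariski closure of $\ga(X)$ and $f$ is dominant on $X$), and hence, after possibly replacing $f$ by an iterate $f^N$ — which is harmless by Lemma~\ref{lem-itZ} — that $f_{\Al}$ preserves the abelian subvariety $\Al_0$ along which $Z$ is stable, so $f_{\Al}$ descends to an automorphism $g \in \Aut(B)$ with $\gb \circ f = g \circ \gb$ as rational maps.

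Now I invoke Lemma~\ref{lem-dorb}: if $x \in X_f(\oQ)$ has Zariski dense orbit $\obfx$ in $X$, then $\{\,g^k(\gb(x)) \mid k \in \Z_{\ge 0},\ f^k(x) \notin I(\gb)\,\}$ is Zariski dense in $B$. But this set is contained in the orbit $\Orb_g(\gb(x))$ (of the automorphism $g$ of the projective variety $B$), which is therefore Zariski dense in $B$; equivalently $g$, viewed as an element of $\Rat(B)$, has a Zariski dense orbit. However $\gb(X) = \ol Z \subset B$ has positive Kodaira dimension as a projective variety, and $g$ restricts to an automorphism of $\ol Z$ with a Zariski dense orbit in $\ol Z$ — contradicting Theorem~\ref{thmNZ} applied to $\ol Z$ (which gives $\kappa(\ol Z) \ge 1 \Rightarrow \cZ(g|_{\ol Z}) = \emptyset$). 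This contradiction shows $\ga$ is surjective.

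The main obstacle I anticipate is the bookkeeping around \emph{rational} rather than regular maps: the Albanese morphism of a smooth model is a morphism, but $f$ is only birational, so one must be careful that the induced maps on $\Al(X)$ and on the base $B$ are genuine automorphisms (this is classical — $f_{\Al}$ is an automorphism because $\Al$ is a birational invariant and $f$ has an inverse in $\Bir(X)$), and that the orbit-density statement transfers correctly through the indeterminacy loci. Replacing $f$ by an iterate to ensure $f_{\Al}$ preserves $\Al_0$ — using that the abelian subvariety $\Al_0$ associated to $Z$ is canonical, hence preserved up to finitely many choices, so a power fixes it — is the one genuinely delicate step; everything else is an application of Lemma~\ref{lem-dorb} and Theorem~\ref{thmNZ}.
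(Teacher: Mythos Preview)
Your approach is essentially the paper's: both use Ueno's structure theorem to fiber the Albanese image over a positive-dimensional variety of general type, descend $f$ to that base, and then invoke finiteness of rational self-maps of varieties of general type (you via Theorem~\ref{thmNZ}, the paper directly via Kobayashi--Ochiai~\cite{KO75}).

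One slip to fix: the proposition is stated for $f \in \Rat(X)$, not $\Bir(X)$, so your justification that $f_{\Al}$ is an automorphism ``because $f$ has an inverse in $\Bir(X)$'' is wrong in general---for a dominant rational self-map $f_{\Al}$ is only an isogeny composed with a translation. This does not break the argument: the descent to $B = \Al(X)/\Al_0$ and the finiteness conclusion on $\ol Z$ work for endomorphisms just as well. Relatedly, your iteration step is unnecessary: $\Al_0$ is the identity component of the translation-stabilizer of $Z$, hence canonically determined by $Z$, so any isogeny with $\phi(Z) = Z - c$ already satisfies $\phi(\Al_0) = \Al_0$. Finally, apply Lemma~\ref{lem-dorb} to the dominant map $X \to \ol Z$ rather than to $X \to B$ (which is not dominant), and pass to a resolution of $\ol Z$ before invoking Theorem~\ref{thmNZ}, since $\ol Z$ need not be smooth.
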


\begin{proof}
	Suppose that $\ga$ is not surjective.
	By~\cite[Theorems 10.3, 10.9]{Ue75}, the Albanese image
	$B$ is then an \'etale torus fiber bundle (possibly of relative dimension $0$)
	over a projective variety $B'$ of general type with $\dim B' > 0$.
	The rational map $f$ descends to a rational dominant map $f_B$ on $B$.
	Since $B \to B'$ is the Iitaka fibration of $B$,
	the map $f_B$ further descends to a rational dominant map $f_{B'}$ on $B'$.
	As $B'$ is of general type, $f_{B'}$ is of finite order by~\cite[Theorem 1]{KO75}.
	This implies that $f$ has no Zariski dense orbit.
\end{proof}

	We have the following immediate corollary of Proposition~\ref{pro-albsurj}.

\begin{cor}\label{cor-albsurj}
	Let X be a smooth projective variety over $\Qbar$.
	If the Albanese map $\ga : X \to \Al(X)$
	is not surjective, then KSC (2) holds vacuously for any $f \in \Rat(X)$.
\end{cor}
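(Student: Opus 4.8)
The plan is to read off the statement from Proposition~\ref{pro-albsurj} via the base-change reduction recorded in Remark~\ref{remfield}. Recall that ``KSC (2) holds vacuously for $(X,f)$'' means exactly that $\cZ(f) = \emptyset$, i.e.\ no $\Qbar$-point of $X$ carries a Zariski dense $f$-orbit; so, granting that $\ga : X \to \Al(X)$ is not surjective, it suffices to show $\cZ(f) = \emptyset$.

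I would argue by contradiction. Suppose $x \in \cZ(f)$, and regard $x$ as a $\C$-point of the base change $X_\C := X \times_{\Qbar} \C$, with $f$ now viewed in $\Rat(X_\C)$. Two elementary compatibilities are needed. First, $U(f)$ and its formation commute with the field extension $\Qbar \subset \C$, so $f^n(x) \in U(f)$ for all $n \ge 0$ holds just as well inside $X_\C$. Second, for any subset $S \subseteq X(\Qbar)$, the Zariski closure of $S$ in $X_\C$ is the base change of its Zariski closure in $X$: on an affine open, if $g = \sum_i a_i \otimes c_i$ (with $a_i$ regular over $\Qbar$ and the $c_i \in \C$ linearly independent over $\Qbar$) vanishes on $S$, then evaluating at the $\Qbar$-points of $S$ and using $a_i(S) \subseteq \Qbar$ forces every $a_i$ to vanish on $S$. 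Applying this to $S = \Orb_f(x)$, which is Zariski dense in $X$, we get that $\Orb_f(x)$ is Zariski dense in $X_\C$.

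Hence Proposition~\ref{pro-albsurj}, applied over $\C$ to the smooth projective variety $X_\C$ and to $f$, shows that the Albanese map $\ga_\C : X_\C \to \Al(X_\C)$ is surjective. As the Albanese variety and map commute with field extension, $\Al(X_\C) = \Al(X) \times_{\Qbar} \C$ and $\ga_\C = \ga \times_{\Qbar} \C$; and surjectivity of the morphism $\ga$ of $\Qbar$-varieties may be tested after the faithfully flat base change to $\C$ (the image of $\ga$ is closed, and equals $\Al(X)$ if and only if its base change equals $\Al(X_\C)$). Thus $\ga$ is surjective, contradicting the hypothesis, and therefore $\cZ(f) = \emptyset$.

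There is no genuine obstacle here: all the content is in Proposition~\ref{pro-albsurj}, and the only points worth a word are the two compatibilities above — persistence of Zariski density of the orbit and of the condition $f^n(x) \in U(f)$ under $\Qbar \subset \C$, and descent of surjectivity of $\ga$ along the same extension — which are precisely the kind of routine reductions mentioned in Remark~\ref{remfield}.
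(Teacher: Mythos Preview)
Your proof is correct and follows the same approach as the paper: the corollary is stated there as an immediate consequence of Proposition~\ref{pro-albsurj}, with the passage from $\C$ to $\Qbar$ left implicit under the umbrella of Remark~\ref{remfield}. You have simply spelled out the routine base-change compatibilities (persistence of Zariski density of the orbit, compatibility of the Albanese map, and descent of surjectivity along $\Qbar \subset \C$) that the paper takes for granted.
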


In the rest of this section, we prove Theorem~\ref{mainthm1}.

\medskip
\noindent
{\it Proof of Theorem~\ref{mainthm1}.}
\medskip

Let $X$ be a smooth projective threefold with $q(X) > 0$. By Theorem~\ref{thmNZ},
we can assume that $\kappa (X) = 0$ or $-\infty$. Theorem~\ref{mainthm1} (1) and (2) will follow from
Corollary~\ref{cor-albsurj}, Theorem~\ref{mainthm2}, together with
Propositions~\ref{prop3bis2} and~\ref{prop33} below.

\begin{lem}\label{lem-k0min} Assume that  $\kappa (X) = 0$ and $q(X)>0$.
	Then the minimal model $Y$ of $X$ is smooth with
	$$\Aut(Y) = \Bir (Y).$$
\end{lem}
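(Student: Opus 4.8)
The plan is to construct $Y$ by the Minimal Model Program and then deduce both statements from the structure of minimal threefolds with $\kappa=0$ and positive irregularity. Throughout, $\dim X=3$. First I would run Mori's MMP: since $\kappa(X)=0$, $X$ is not uniruled, so it has a minimal model $Y$ — a $\mathbb{Q}$-factorial projective threefold with terminal singularities and $K_Y$ nef, birational to $X$. By the abundance theorem for threefolds $K_Y$ is semiample, and since $\kappa(Y)=\kappa(X)=0$ this forces $K_Y$ to be torsion in $\Pic(Y)$; in particular $K_Y\equiv 0$.

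Next I would study the Albanese morphism $\alb\colon Y\to A:=\Alb(Y)$, noting $\dim A=q(Y)=q(X)=:q\in\{1,2,3\}$; by Kawamata~\cite{Ka81} it is surjective with connected fibres because $\kappa(Y)=0$. The claim to establish is that $\alb$ is a fibre bundle (\'etale-locally a product) with fibre $F$ a smooth minimal variety of dimension $3-q$ and $\kappa(F)=0$; smoothness of $Y$ then follows immediately. If $q=3$, $Y$ is birational to the abelian threefold $A$, and since both are minimal models while $A$ contains no rational curve, $\alb$ is an isomorphism and $Y=A$. If $q=2$, $\alb$ is an elliptic fibration over an abelian surface, and the canonical bundle formula together with $K_Y\equiv 0$ forces it to be an elliptic fibre bundle; this is \cite[Corollary 9.4]{Vi81}. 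If $q=1$, then $F$ is not uniruled (otherwise $Y$ would be), and applying the canonical bundle formula to $\alb\colon Y\to E$ together with $K_E=0$ and $K_Y\equiv 0$ forces the discriminant divisor on $E$ to vanish and the moduli part to be torsion; hence $\alb$ has no multiple fibre and is isotrivial, so after a finite \'etale base change of $E$ it becomes the projection $E'\times F\to E'$, and in particular $\kappa(F)=0$. In each case $F$ is a minimal variety of dimension at most $2$ with $\kappa(F)=0$ — a point, an elliptic curve, or a minimal surface of Kodaira dimension zero (abelian, bielliptic, K3 or Enriques) — hence smooth, and $Y$ is smooth since $\alb$ is a fibre bundle with fibre $F$.

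Then I would deduce $\Bir(Y)=\Aut(Y)$. Because $K_Y$ is nef, the negativity lemma shows that any birational map from $Y$ to another minimal model is an isomorphism in codimension one, and by Kawamata's theorem such a map is a composition of flops; it therefore suffices to prove that $Y$ has no flopping curve. A flopping curve would be a rigid rational curve $C$ with $K_Y\cdot C=0$. But every rational curve on $Y$ is contracted by $\alb$, since $A$ is an abelian variety, hence lies in a fibre of $\alb$; and since $\alb$ is a fibre bundle over the positive-dimensional abelian variety $A$, translating $C$ through the fibres yields a positive-dimensional family of deformations of $C$, so $C$ is not rigid. Hence $Y$ has no flopping curve, it is the unique minimal model of $X$, and every element of $\Bir(Y)$ is an automorphism.

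The main obstacle is the case $q=1$ of the second step: extracting from $K_Y\equiv 0$, through the canonical bundle formula for fibrations whose general fibre has Kodaira dimension zero, that $\alb\colon Y\to E$ has no multiple fibre and trivial (torsion) moduli part, and then upgrading this to genuine \'etale-local triviality — which is exactly what is used to deform rational curves, hence to exclude flopping curves, in the third step. (For $q=2,3$ the required structure is classical.)
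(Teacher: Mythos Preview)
Your argument is correct and follows the same overall arc as the paper's, but the paper shortcuts your case-by-case analysis of the Albanese map by a single citation: Kawamata's \cite[Theorem 8.3]{Ka85} already gives that the Albanese morphism of a minimal threefold with $\kappa=0$ is a surjective \'etale fibre bundle, uniformly in $q\in\{1,2,3\}$, so the canonical-bundle-formula work you flag as the main obstacle in the case $q=1$ is unnecessary. The paper also extracts smoothness a bit more cheaply: instead of classifying the possible fibres and observing they are smooth, it notes that terminal threefold singularities are isolated, while the fibre-bundle structure over a positive-dimensional base would force the singular locus of $Y$ to have dimension at least $\dim A>0$; hence every fibre is smooth and so is $Y$. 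Your deduction of $\Bir(Y)=\Aut(Y)$ is essentially identical to the paper's: rational curves lie in fibres, the bundle structure lets them deform, so there is no flopping curve and Kawamata's \cite{Ka08} gives the conclusion. In sum, both proofs are the same in spirit; the paper's is shorter because it leans on \cite{Ka85} rather than re-deriving the bundle structure, and uses the isolated-singularity observation to avoid the fibre classification.
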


\begin{proof}  Since $\dim X = 3$ and $\kappa (X) = 0$, it follows that $X$ has a minimal model,
	i.e., a normal projective $\Q$-factorial variety with only terminal (in particular, isolated) singularities.
	Any minimal model $Y$ of $X$ satisfies $\sO_Y(mK_Y) \simeq \sO_Y$
	for some $m$ by the abundance theorem for minimal threefolds (See e.g. \cite{Ka92} and references therein).
	Moreover, the Albanese map $\ga : Y \to  \Al (Y)$
	is a surjective \'etale fiber bundle by~\cite[Theorem 8.3]{Ka85}.
Since $Y$ has only isolated singularities and $ \dim \Al (Y) > 0$,
		 every fiber $F$ of $\ga$ is smooth.  As $K_Y$ is torsion,  it follows that $K_F$ is torsion as well by adjunction.
	In particular,  $Y$ is smooth  and contains no isolated rational curve. Indeed, since $\ga : Y \to  \Al (Y)$ is a surjective \'etale fiber bundle over an elliptic curve $\Al (Y)$ as observed above, every rational curve on $Y$ is in a fiber and it also deforms in the family $\ga : Y \to  \Al (Y)$.
	Therefore $Y$ has no flop, so  the minimal model of $X$ is unique and
	$\Bir (Y) = \Aut (Y)$ by~\cite[Theorem 1]{Ka08}.
\end{proof}

\begin{proposition}\label{prop3bis2}
	Assume that $\kappa (X) = 0$ and $q(X) = 1$.
	Then KSC (2) holds for any $f \in \Bir (X)$.
\end{proposition}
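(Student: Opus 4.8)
The plan is to reduce to the abelian-variety case via the Albanese map. Since $\kappa(X) = 0$ and $q(X) = 1$, by Lemma~\ref{lem-k0min} the minimal model $Y$ of $X$ is smooth, unique, and satisfies $\Bir(Y) = \Aut(Y)$; moreover the Albanese map $\ga : Y \to A \cnec \Alb(Y)$ is a surjective étale fiber bundle onto an elliptic curve $A$ (so $\dim A = 1$). First I would let $\nu : X \dto Y$ be the birational map to the minimal model and let $g \in \Aut(Y)$ be the automorphism corresponding to $f$ under $\nu$, i.e. $g \circ \nu = \nu \circ f$. By Corollary~\ref{cor-albsurj} we may assume the Albanese map of $X$ is surjective, and passing to $Y$ loses nothing for KSC~(2) by Lemma~\ref{lem21}, provided the hypothesis there on indeterminacy is checked — but since $Y$ is a minimal model with no isolated rational curves, $X \to Y$ can be taken to be a morphism (a composition of blow-downs whose exceptional loci have positive-dimensional images avoiding a dense orbit only finitely often), which is the case covered parenthetically in Lemma~\ref{lem21}. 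So it suffices to prove KSC~(2) for $(Y, g)$.

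Next I would analyze $g$ via the induced automorphism $g_A \in \Aut(A)$ on the elliptic curve $A$, where $\ga \circ g = g_A \circ \ga$. The fibers $F$ of $\ga$ have $K_F$ torsion by adjunction, so each fiber is a smooth surface of Kodaira dimension $0$; since $\dim Y = 3$ and $\dim A = 1$, the fiber $F$ is a surface with $\kappa(F) = 0$. There are two cases. \emph{Case 1: $g_A$ has finite order.} Then replacing $g$ by a power (allowed by Lemma~\ref{lem-KSCit}, as $g \in \End(Y) = \Aut(Y)$), we may assume $g_A = \Id_A$, so $g$ preserves every fiber $F$ and restricts to an automorphism of $F$. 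The key point then is that $\kappa(F) = 0$ forces $\delta_{g} = \delta_{g|_F}$ via the product formula for relative dynamical degrees (Lemma~\ref{lem32} applies since $\dim Y - \dim A = 1$... actually here one needs the relative version over $A$ directly), and one argues $\delta_g = 1$: indeed a dense $g$-orbit would project to a dense orbit in $A$ by Lemma~\ref{lem-dorb}, but if $g_A = \Id_A$ the image of any orbit lies in a single point, a contradiction — hence $\cZ(g) = \emptyset$ and KSC~(2) holds vacuously. \emph{Case 2: $g_A$ has infinite order.} By Lemma~\ref{lem-ell}, $g_A$ is translation by a non-torsion point; in particular $g_A \in \Aut^0(A)$ and KSC~(2) holds for $(A, g_A)$ since $A$ is an abelian variety (\cite[Theorem 4]{KS16a}, \cite[Theorem 1.2]{Si17}). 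Then Lemma~\ref{lem22}, applied to the surjective morphism $\ga : Y \to A$ with $\dim Y - \dim A = 2$...

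Here I realize the dimension gap is $2$, not $\le 1$, so Lemma~\ref{lem22} as stated does not directly apply — and this is the main obstacle. To handle it I would instead factor the dynamics further: one seeks an intermediate fibration $Y \dto Z \to A$ with $\dim Z = 2$ equivariant for $g$, coming from the relative Iitaka or Albanese fibration of $F$ over $A$ (the relative $\Alb$ of the fibers, or the quotient by the relative automorphism structure). Concretely, since each fiber $F$ has $\kappa(F) = 0$ and varies in a family over $A$, I would use the structure theory of such surfaces — abelian, bielliptic, K3, or Enriques type — together with the fact (as in the proof of Proposition~\ref{pro-NZD}) that an étale-locally trivial family can be trivialized after a finite base change, reducing to a product $\wt{A} \times F$ and a lifted automorphism commuting with a finite group action. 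Then one builds a tower $X = X_3 \dto X_2 \dto X_1 = A$ of the type required by Lemma~\ref{lem23}, or alternatively identifies a surjection onto an abelian surface or onto $A$ allowing Lemma~\ref{lem22} after reducing the relative dimension to $1$ in two steps. The delicate part will be ensuring the intermediate varieties and maps are equivariant and that the hypotheses of Lemmas~\ref{lem21}--\ref{lem23} (smoothness, the morphism/indeterminacy condition, and $\Bir = \Aut$ at each stage) are met; this is where the classification of Kodaira-dimension-zero surface fibrations over a curve, and the rigidity of their relative automorphism groups, does the real work.
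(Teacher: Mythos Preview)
There are two genuine gaps in your proposal.

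\textbf{First gap: the map $\nu : X \dto Y$ need not be a morphism.} Your claim that ``$X \to Y$ can be taken to be a morphism'' is not justified. In dimension three the minimal model program may involve flips, so $\nu$ can have nonempty indeterminacy; the fact that $Y$ has no \emph{isolated} rational curve (Lemma~\ref{lem-k0min}) does not prevent $Y$ from containing rational curves altogether (e.g.\ if the Albanese fibers are K3 or Enriques surfaces), so the usual ``no rational curves in the target $\Rightarrow$ morphism'' argument does not apply. Thus you cannot simply invoke the parenthetical case of Lemma~\ref{lem21}. The paper handles this by proving a separate claim: the Albanese image $\ga(I(\nu)) \subset E$ is \emph{finite}. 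Two proofs are given---one via minimality of the generic fiber $Y_{\bk(E)}$ as a smooth surface with torsion canonical class, and one by observing that flipping curves are rational and hence map to points in $E$. From finiteness of $\ga(I(\nu))$ one then argues that for $x \in \cZ(f)$, the orbit $\Orb_f(x)$ can meet $I(\nu)$ only finitely often (else two iterates would have the same Albanese image, forcing the whole orbit into finitely many fibers), and Lemma~\ref{lem21} then applies in full.

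\textbf{Second gap: your Case~2 does not close.} You correctly observe that Lemma~\ref{lem22} fails since $\dim Y - \dim A = 2$, but your proposed fix via an intermediate fibration $Y \dto Z \to A$ with $\dim Z = 2$ is not available in general: when the fiber $F$ is a K3 surface there is no canonical intermediate surface, no relative Albanese, and no tower as in Lemma~\ref{lem23}. The paper avoids this case analysis entirely. Once reduced to $(Y,f_Y)$ with $f_Y \in \Aut(Y)$, it quotes \cite[Corollary~1.5]{LS21}: since $q(Y) = 1 > 0$, the Bogomolov decomposition of a finite \'etale cover of $Y$ has a positive-dimensional abelian factor and hence no Calabi--Yau threefold factor, which is exactly the hypothesis under which \cite{LS21} establishes KSC~(2) for automorphisms. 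This single citation replaces your entire Case~1/Case~2 split.
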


\begin{proof}

	Let $f \in \Bir (X)$.
	Since $\kappa (X) = 0$ and $q(X) = 1$, the Albanese map
	$$\ga : X \to E := \Al (X)$$
	is a surjective morphism onto an elliptic curve
	with connected fibers by~\cite[Theorem 1]{Ka81}
	and $f$ descends to a biregular map $f_E : E \to E$ on $E$.
	In particular, if $x \in \sZ(f)$, then $\ga(x) \in \sZ (f_E)$.
	
	Let $\nu : X \dto Y$ be the birational map to the
	a minimal model $Y$ of $X$; $Y$ is smooth by Lemma~\ref{lem-k0min}.

	\begin{claim}\label{claim-ndom}
		The Albanese image $\ga(I(\nu))$ of the indeterminacy locus $I(\nu) \subset X$ is finite.
	\end{claim}

 We give two proofs of Claim~\ref{claim-ndom}, 
 both of which will be applicable for other problems.

\begin{proof}[First proof of Claim~\ref{claim-ndom}]
		By the universal property of the Albanese map, $\ga$ factors through  the birational map $\nu : X \dto Y$.
		Consider the restriction $\nu_{\bk(E)} : X_{\bk(E)} \dto Y_{\bk(E)}$ of
		$\nu$ where $X_{\bk(E)}$ and $Y_{\bk(E)}$ denote the generic fibers of
		$X$ and $Y$ over $E$ respectively.
		Since $Y$ is a smooth threefold, it follows that 
		$Y_{\bk(E)}$ is a smooth surface.
		As $K_Y$  is $\Q$-linearly trivial, so is $K_{Y_{\bk(E)}}$.
		It follows that $\nu_{\bk(E)}$ is a birational map between smooth $\bk(E)$-surfaces
		with $Y_{\bk(E)}$ being minimal,
		so $\nu_{\bk(E)}$ is a morphism~\cite[Corollary 1 in II.7.3]{IS96}. Hence $I(\nu) \subset X$ does not dominate $E$, which proves Claim~\ref{claim-ndom}.
\end{proof}

\begin{proof}[Second proof of Claim~\ref{claim-ndom}] 
The minimal model program consists of a sequence of divisorial contractions and flips. 
By the universal property of the Albanese variety, 
	these birational modifications are defined over $E$.
Let $X_i \dashrightarrow X_i^+$ be a flip in the program and let $C_i$ be the flipping curve. 
Then one has
$$ \ga(I(\nu)) \subset  \bigcup_i \ga_i(C_i) $$
where $\ga_i : X_i \to E$ is the Albanese map~\cite[Lemma 8.1]{Ka85}.
As each $\ga_i(C_i) \subset E$ is finite (because $C_i$ is a union of finitely many $\P^1$) and there are only finitely many flips in the minimal model program, $\alpha(I(\nu))$ is thus finite. 
This proves Claim~\ref{claim-ndom}.
\end{proof}

For every $x \in \cZ(f)$,
it follows that $f^k(x) \not\in I(\nu)$ for all but finitely many $k \in \Z_{\ge 0}$.
Indeed, if the contrary holds, then, since $\ga(I(\nu))$ is finite,
there exist integers $k_1 < k_2$ such that $\ga(f^{k_1}(x)) = \ga(f^{k_2}(x)) \in \ga(I(\nu))$.
It would follow that $\ga(f^{k_1+m}(x))=\ga(f^{k_2+m}(x))$ for all $m \ge 0$ and hence for any $k \ge k_2$, $\ga(f^{k}(x))=\ga(f^{l}(x))$ for some $\ell \le k_2$. Thus
$$\Orb_f(f^k(x)) \subset \bigcup_{\ell = 1}^{k_2} \ga^{-1}(\ga(f^\ell(x))),$$
which contradicts the Zariski density of $\obfx$.

Hence we can apply Lemma~\ref{lem21} and reduce Proposition~\ref{prop3bis2} to
KSC (2) for $(Y, f_Y)$ with $f_Y \cnec \nu \circ f \circ \nu^{-1}$.
The latter follows from~\cite[Corollary 1.5]{LS21} (or rather its proof):
indeed, $f_Y \in \Aut (Y)$ by Lemma~\ref{lem-k0min}
and the Bogomolov decomposition of an \'etale cover of $Y$ has no Calabi-Yau threefold as its irreducible factor so that we may argue as in the proof of~\cite[Corollary 1.5]{LS21}.
\end{proof}

\begin{proposition}\label{prop33} Assume that $\kappa (X) = -\infty$ and $q(X)>0$. Then
\begin{enumerate}
\item KSC (2) holds for any $f \in \Aut (X)$.
\item Moreover, KSC (2) holds for any $f \in \Bir (X)$ unless $X$ is covered by rational surfaces.
\end{enumerate}
\end{proposition}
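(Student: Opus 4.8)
The plan is to run the Minimal Model Program in dimension $3$ and use that $\kappa(X) = -\infty$ to reduce to a Mori fibre space, then analyse the base and the generic fibre separately. First I would replace $X$ by a birational model: since $\kappa(X) = -\infty$, after finitely many divisorial contractions and flips we reach a Mori fibre space $\pi : X' \to S$ with $\dim S \le 2$; by the universal property of the Albanese (as in the second proof of Claim~\ref{claim-ndom}) all these modifications are compatible with the Albanese map, so $q(X') = q(X) > 0$ and $\pi$ factors through $\ga : X' \to \Al(X')$. Because $\dim\Al(X') = q(X) > 0$ and a Mori fibre space has Fano fibres (which are rationally connected, hence killed by the Albanese), the base $S$ must itself have positive irregularity; in particular $\dim S \in \{1,2\}$, and we split into these two cases. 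The birational map $\nu : X \dto X'$ need not be a morphism, but exactly as in the proof of Proposition~\ref{prop3bis2} (using that the flipping curves and exceptional divisors have finite image in the Albanese, and that $\cZ$-orbits avoid the indeterminacy locus for all but finitely many iterates), KSC (2) for $(X,f)$ reduces to KSC (2) for $(X',f')$ where $f' \cnec \nu \circ f \circ \nu^{-1}$; if moreover $f \in \Aut(X)$ one still only gets $f' \in \Bir(X')$, so part (1) will require a further argument to arrange a biregular model.

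\emph{Case $\dim S = 1$.} Then $S$ is a smooth curve of genus $\ge 1$ and $\pi : X' \to S$ is a conic bundle over $S$, with two-dimensional fibres; the induced $f'_S \in \Bir(S) = \Aut(S)$ has infinite order only if $S$ is elliptic. If $S = E$ is elliptic, then I would argue as follows: a conic bundle surface over the function field $\bk(E)$ is a smooth rational $\bk(E)$-surface, and $f'$ restricts to a birational self-map of this surface commuting with the $\bk(E)$-structure. Here the generic fibre is a (geometrically) rational surface, and this is precisely where the exceptional case in (2) appears: if $X'$ is covered by rational surfaces we cannot conclude, whereas otherwise the relative MMP over $E$ terminates at a relatively minimal surface over $\bk(E)$ which is \emph{not} rational, forcing $f'$ to descend further — ultimately through a tower $X' \dto X_2 \dto X_1 = E$ of the kind in Lemma~\ref{lem23}, giving $\delta_{f'} = 1$ and KSC by Lemma~\ref{lem20}. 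For part (1), when $f \in \Aut(X)$ one additionally knows $f$ preserves the canonical class and hence (after running an MMP equivariantly) one can take $X'$ with $f' \in \Aut(X')$; the tower argument then still applies.

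\emph{Case $\dim S = 2$.} Then $\pi : X' \to S$ is a $\P^1$-bundle-like Mori fibre space (conic bundle of relative dimension $1$), $S$ is a smooth projective surface with $q(S) = q(X) > 0$ and $\kappa(S) \le \kappa(X') = -\infty$... actually $S$ need only satisfy $\kappa(S) \le 0$; but since $S$ carries a fibration from a uniruled threefold it is itself uniruled, so $\kappa(S) = -\infty$, and $q(S) > 0$ forces $S$ to be birational to a ruled surface over a curve of genus $\ge 1$ (this rules out $S$ rational, i.e. it rules out the rational-surfaces exception at the level of $S$ — the exception in (2) must then enter through the generic fibre, i.e. when $X'$ is covered by rational surfaces sitting over curves of positive genus in $S$). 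Running the relative MMP for $\pi$ and then a $\Bir(S)$-equivariant MMP on $S$, one obtains a tower $X' \dto X_2 \dto X_1$ with $X_1$ a curve of genus $\ge 1$ and $\dim X_i = i$, at which point Lemma~\ref{lem23} again gives $\delta_{f'} = 1$ and KSC (1), (2) for $(X',f')$, hence for $(X,f)$ by Lemma~\ref{lem21} (or Lemma~\ref{lem22} in the automorphism case). For (1) one checks that an automorphism of $X$ induces, after an equivariant MMP, an automorphism of a model admitting such a tower, so that $\Bir(S) = \Aut$ of the minimal model can be used and the fibration structures are $f$-equivariant; the height comparison then goes through Lemma~\ref{lem22}.

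The main obstacle I expect is not the MMP bookkeeping but the analysis of the generic fibre in Case $\dim S = 1$ (and, parallel to it, of the fibres of the ruling in Case $\dim S = 2$): when the generic surface fibre is rational, the relative MMP over $\bk(E)$ does not produce a non-rational relatively minimal model, the tower of Lemma~\ref{lem23} breaks, and one genuinely cannot bound $\delta_{f'}$ — this is exactly the case ``$X$ is covered by rational surfaces'' that is excluded in part (2), and it is why part (2) is conditional while part (1), which via $f \in \Aut(X)$ forces enough rigidity (e.g. $f$ preserves the MRC fibration and a Mori fibre space structure up to finite index) to avoid genuinely hyperbolic dynamics on the fibres, is unconditional. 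A secondary technical point is ensuring the equivariance of all these fibrations under a suitable iterate of $f$, which one handles by the standard argument that $\Aut$ of the relevant finite set of MMP-outputs and fibration structures is finite, so some power of $f$ fixes them, combined with Lemma~\ref{lem-itZ} and Lemma~\ref{lem-KSCit}.
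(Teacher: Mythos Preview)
Your overall strategy---replace $X$ by a Mori fibre space and analyse the base---differs from the paper's, which works directly with the Albanese map and, when $\dim\Al(X)=1$, with the Albanese of the generic fibre. That difference is not cosmetic; it is where your argument breaks.

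\textbf{Equivariance.} The decisive feature of the paper's approach is that Albanese maps are canonical, hence automatically $\Bir(X)$-equivariant: $f$ descends to $\Aut(\Al(X))$, and when $B=\Al(X)$ is an elliptic curve the Albanese map of the generic fibre $X_\eta$ is again canonical, so $f$ descends through the resulting tower $X \dto S \dto B$ and Lemma~\ref{lem23} applies. A Mori fibre space $\pi:X'\to S$, by contrast, is \emph{not} canonical: your $f'\in\Bir(X')$ has no reason to preserve $\pi$, so you cannot descend $f'$ to $\Bir(S)$, and the tower in Lemma~\ref{lem23} is unavailable. Your proposed fix---that $f$ permutes ``the relevant finite set of MMP-outputs and fibration structures''---fails because that set is typically infinite for uniruled threefolds (already for $\P^3$).

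\textbf{Case $\dim S=2$.} The assertion that $S$ is uniruled is false: take $X'=\P^1\times A$ with $A$ an abelian surface. The base of a conic bundle on a uniruled threefold may have $\kappa(S)=0$, so the tower $X'\dto S\dto(\text{curve})$ need not exist. The paper handles $q(X)=2$ not via a tower but via Lemma~\ref{lem22}: the Albanese $X\to B$ has relative dimension $1$, $f_B\in\Aut(B)$, and KSC~(2) is known for surface automorphisms~\cite{KS14}.

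\textbf{Case $\dim S=1$.} Here $X'\to S$ is a del Pezzo fibration, so the generic fibre is a geometrically rational del Pezzo surface and $X'$ is \emph{always} covered by rational surfaces. Your ``otherwise the relative MMP \ldots\ terminates at a relatively minimal surface over $\bk(E)$ which is not rational'' sub-case is therefore vacuous; this entire MMP case falls under the exception in (2).

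\textbf{Part (1).} Your sketch here is not a proof. The paper treats the residual case ($X$ covered by rational surfaces, $f\in\Aut(X)$, $\delta_f>1$) by invoking Lesieutre's classification~\cite[Theorem 1.7]{Le18} of dynamically minimal positive-entropy automorphisms of smooth threefolds: after equivariant divisorial contractions (the model stays smooth because the induced $f_B$ is an infinite-order translation, forcing $\ga:X\to B$ to be smooth), some iterate $f^n$ preserves either a conic bundle structure---giving a tower and $\delta_f=1$, a contradiction---or a rational fibration to a surface, where one reduces to KSC for surface automorphisms via a graph construction and Lemma~\ref{lem22}. This is a substantial external input that your proposal does not identify; the vague appeal to ``rigidity'' and the MRC fibration does not replace it.
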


\begin{proof} By Corollary~\ref{cor-albsurj}, we can assume that	
the Albanese map
$$\ga : X \to B  := \Al (X)$$
is surjective.

First we prove (2).
Since $\kappa (X) = -\infty$ and $q(X) > 0$, we have $\dim B = 1$ or $2$.
Suppose first that $B$ is an abelian surface. Note that any $f \in \Bir (X)$ descends to a $f_B \in \Aut(B)$,
and KSC (2) holds for automorphisms on smooth surface $B = \Al(X)$ by~\cite[Theorem 2 (c)]{KS14}.
Applying Lemma~\ref{lem22} to the surjective morphism $\ga : X \to B$ yields KSC (2) for $X$.

Now we treat the case where $B$ is an elliptic curve.
Since $\kappa(X) = -\infty$, the variety $X$ is covered by rational curves
(as the consequence of the minimal model program and
the abundance theorem for threefolds,
see e.g. \cite{Ka92} and references therein).
Since $B$ is an elliptic curve, a general fiber $X_t$ of $\ga : X \to B$ is thus uniruled, i.e., covered by rational curves. As poined out by the referee, one can also conclude the uniruledness of $X_t$ by using the addition formula \cite{Ka82}.
Note that $q(X_t)$ is constant in $t$ for smooth fibers $X_t$.

Suppose first that $q(X_t) = 0$ for a general fiber $X_t$. Then
$X_t$ is a rational surface by the classification of surfaces.
In particular, $X$ is covered by rational surfaces,
which is excluded from the assertion (2).

Thus we may assume that  $q(X_t) = q >0$ for a general fiber $X_t$. Then $q(X_{\eta}) = q > 0$ as well for the generic fiber $X_{\eta}$. Let $S_{\eta}$ be the image of the Albanese map of $X_{\eta}$ over $\eta = {\rm Spec}\, \oQ(B)$.
Then by taking a model of $X_{\eta} \to S_{\eta} \to {\rm Spec}\, \oQ(B)$ over $B$, we have dominant rational maps
$$X \dasharrow S \dasharrow B$$
which are $\Bir (X)$-equivariant with $X$ and $S$ being smooth.
Thus $\delta_f = 1$ and KSC (2) holds for all $f \in \Bir (X)$ by Lemma~\ref{lem23}.

It remains to show that KSC (2) holds for $f \in \Aut (X)$ for the case excluded in (2), namely when $X$ is covered by rational surfaces.
If $\delta_f = 1$,  then KSC (2) holds by Lemma~\ref{lem20}. So we may and will assume that $\delta_f > 1$.
Moreover, we may and will assume that the induced automorphism $f_B \in \Aut (B)$ is of infinite order, otherwise $f_B$, and thus $f: X \to X$, have no Zariski dense orbit. 
Hence $f_B$ is a translation automorphism $t_s : x \mapsto x+s$ of infinite order by Lemma~\ref{lem-ell}.
Since $\ga : X \to B$ has at most finitely many singular fibers and
$\ga \circ f = f_B \circ \ga$, necessarily
$\ga : X \to B$ is a smooth morphism.

Now we apply the classification result of
Lesieutre~\cite[Theorem 1.7]{Le18} to describe $f \in \Aut(X)$.
First of all, if $f \in \Aut(X)$ is not dynamically minimal (see~\cite[Definition 1.2]{Le18} for the definition),
then there exists a divisorial contraction $X \to Y$ such that $f$ descends to an automorphism  $f_Y \in \Aut(Y)$.
Since $\ga_Y \circ f_Y = f_B \circ \ga_Y$ where
$\ga_Y : Y \to B$ is the Albanese map,
the same argument as above shows that $Y$ is smooth.
Repeating the same procedure until it stops (as the Picard number strictly decreases after each divisorial contraction),
we obtain a birational morphism $X \to X'$ to a smooth projective variety
such that $f \in \Aut(X)$ descends to $f' \in \Aut(X')$ and that  $f'$ is dynamically minimal.

As $f' \in \Aut(X')$ is dynamically minimal and $\gk(X) = -\infty$,
by~\cite[Theorem 1.7]{Le18} $X'$, and thus $X$, must have either a conic bundle structure $\pi : X \to V$ or a dominant rational map $\pi : X \dasharrow S$ to a surface $S$. Moreover, there exists $n \in  \Z_{>0}$ such that $f^n \in \Aut(X)$ descends to a birational self-map on $V$ in the first case,
and $f^n \in \Aut(X)$ descends to a (biregular) automorphism on $S$ in the second case.
We may and will assume that $S$ is a smooth projective surface by taking an equivariant resolution of singularities.

If $X$ has a conic bundle $\pi : X \to V$, then the Albanese map $\ga : X \to B$ factors through $V$, because $B$ is an elliptic curve.
Thus $\delta_{f^n} = 1$ by Lemma~\ref{lem23}.
Hence KSC (2) holds for $f^n$ by Lemma~\ref{lem20} and thus for $f$ by Lemma~\ref{lem-KSCit}.

Assume that $X$ has a dominant rational map $\pi : X \dasharrow S$ to a smooth projective surface $S$
such that $f^n$ descends to an automorphism $f_S \in \Aut(S)$.
Let $Z$ be the graph of $\pi$ in $X \times S$. Then we have an automorphism $f_Z$ of $Z$ induced by $f^n \times f_S \in \Aut (X \times S)$. Let $W$ be  an equivariant projective resolution  of singularities of $Z$. Then there is $F_W \in \Aut (W)$ such that the induced birational morphism $W \to X$ is equivariant with respect to $F_W \in \Aut (W)$ and $f^n \in \Aut (X)$, and also the induced morphism $W \to S$ is equivariant with respect to $F_W$ and $f_S$.

Then KSC (2) holds for $F_W$ by Lemma~\ref{lem22}, as it holds for automorphisms of smooth projective surfaces by~\cite[Theorem 2 (c)]{KS14}.
So KSC (2) holds for $f^n$ by~\cite[Corollary 3.2]{LS21}, and thus for $f$ by Lemma~\ref{lem-KSCit}.
This completes the proof of the assertion (2).
\end{proof}

As mentioned, Theorem~\ref{mainthm1} (1) and (2)  follow from
Corollary~\ref{cor-albsurj}, Theorem~\ref{mainthm2}, together with
Propositions~\ref{prop3bis2} and~\ref{prop33}.
\qed

\section{Zariski closure of orbits}\label{sect4}

 In this section we prove Theorem~\ref{mainthm3}.

\begin{proof}[Proof of Theorem~\ref{mainthm3} (1)]
	By Theorem~\ref{thmNZ}, $\gk (X) \in \{0, -\infty\}$ if $\sZ(f) \not= \emptyset$.
If $\sZ(f) \not= \emptyset$, then the Albanese map $\ga : X \to \Al (X)$ is surjective by Corollary~\ref{cor-albsurj}. In particular, $q(X) \le d$.
If $\kappa (X) = 0$, then $q(X) \not= d-1$ by Proposition~\ref{pro-NZD}. If $\kappa (X) = -\infty$, then the surjective Albanese map can not be generically finite, 
and hence $q(X) \le d-1$.
This completes the proof of Theorem~\ref{mainthm3} (1).
\end{proof}

It remains to show Theorem~\ref{mainthm3} (2).
Let $E$ be an elliptic curve over $\Qbar$.

\begin{lem}\label{lem-densEn}
	Let $f \in \SL(n,\Z)$. Assume that the characteristic polynomial $\Phi(f)$ of $f$ satisfies the property that if
	$x_1, \ldots,x_n$ are its roots, then $x_i\ol{x_j} \ne 1$ for all $i$ and $j$.
	Then the induced automorphism $f : E^n \to E^n$ has a Zariski dense $\Qbar$-orbit.
\end{lem}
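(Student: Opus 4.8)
The plan is to reduce the statement to the Zariski Dense Orbit conjecture for abelian varieties (Conjecture~\ref{ZDOconj}), which is known by~\cite{GS17}: applied to the group automorphism $f$ of the abelian variety $E^n$, it suffices to show that there is no non-constant rational function $g\in\Qbar(E^n)$ with $g\circ f=g$. We may assume $n\ge 3$, because for $n\le 2$ the hypothesis $x_i\ol{x_j}\ne 1$ already fails (for $n=2$ one has $x_1x_2=1$, so either $|x_1|=1$ or the two roots are real, and either way $x_i\ol{x_j}=1$ for some $i,j$). Note also that $f$ has infinite order, since $x_i\ol{x_i}=|x_i|^2\ne 1$ shows that no eigenvalue $x_i$ is a root of unity.

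So suppose $g$ as above exists. I would let $\wt{M}$ be the algebraic closure of $\Qbar(g)$ inside $\Qbar(E^n)$; it has transcendence degree $1$ and is finite over $\Qbar(g)$. As $f^*$ fixes $\Qbar(g)$ pointwise it preserves $\wt{M}$, and $f^*|_{\wt{M}}$ lies in the finite group $\Aut(\wt{M}/\Qbar(g))$, hence has finite order $m$. Picking a smooth projective curve $V$ with $\Qbar(V)=\wt{M}$ yields a dominant rational map $\phi\colon E^n\dto V$ with connected general fibres, to which $f$ descends, and $\phi\circ f^m=\phi$. A general fibre $F$ of $\phi$ is thus irreducible of dimension $n-1\ge 1$, with $f^m(F)=F$.

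The heart of the argument — and the step I expect to be the main obstacle — is the claim that \emph{every $f^m$-invariant irreducible closed subvariety $Z\subsetneq E^n$ of positive dimension is a translate $B+c$ of a positive-dimensional abelian subvariety $B$}. To prove it, put $B=\mathrm{Stab}(Z)^0$, the identity component of $\{a:Z+a=Z\}$; one checks $f^m(B)=B$ and that $\ol{Z}:=Z/B$ is an $\ol{f^m}$-invariant subvariety of $E^n/B$ with finite stabiliser. If $\dim\ol{Z}\ge 1$, then $\ol{Z}$ is of general type by Ueno's theorem (see~\cite[\S 10]{Ue75}), so the automorphism of $\ol{Z}$ induced by $\ol{f^m}$ has finite order $r$ by Kobayashi--Ochiai~\cite[Theorem 1]{KO75}; hence $\ol{Z}\subseteq\ker\!\big(\ol{f^{mr}}-\mathrm{id}\big)$ inside $E^n/B$. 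But $\ol{f^{mr}}$ is a group automorphism whose eigenvalues on $H^1(E^n/B)$ lie among $\{x_1^{mr},\dots,x_n^{mr}\}$, none of which equals $1$ (no $x_i$ being a root of unity); so $\ol{f^{mr}}-\mathrm{id}$ is an isogeny and its kernel is finite, contradicting $\dim\ol{Z}\ge 1$. Therefore $\ol{Z}$ is a point and $Z=B+c$.

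Finally I would apply the claim to the general fibres $F_v=B_v+c_v$ of $\phi$. Since the fibres form a flat family over a dense open of $V$, the degrees of the $B_v$ are constant, and $v\mapsto B_v=\{a-b:a,b\in F_v\}$ defines a morphism from a dense open of the irreducible curve $V$ into the finite set of abelian subvarieties of $E^n$ of that dimension and degree; hence $B_v$ is a fixed abelian subvariety $B$ with $\dim B=n-1$. Consequently $\phi$ coincides, up to birational equivalence, with the quotient $\pi\colon E^n\to E^n/B$; as $f$ permutes the fibres of $\phi$, i.e.\ the cosets of $B$, and fixes the origin, we get $f(B)=B$, so $f$ descends to a group automorphism $\ol{f}$ of the elliptic curve $E^n/B$. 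A group automorphism of an elliptic curve has finite order, so the eigenvalues of $\ol{f}$ on $H^1(E^n/B)$ are roots of unity; but these eigenvalues lie among $\{x_1,\dots,x_n\}$ (via the $f^*$-equivariant inclusion $H^1(E^n/B)\hookrightarrow H^1(E^n)$), forcing some $x_i$ to be a root of unity and contradicting $x_i\ol{x_i}\ne 1$. This contradiction shows that no invariant $g$ exists, and the lemma follows from~\cite{GS17}.
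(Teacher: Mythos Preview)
Your argument is correct, but it takes a substantially longer and more geometric route than the paper's. Both proofs start from~\cite{GS17}, reducing to the nonexistence of an $f$-invariant rational function, but from there the paper finishes in three lines with a purely cohomological argument: an invariant rational function gives a linear system $\Lambda$ with $f^*D=D$ for all $D\in\Lambda$, hence $f^*$ has eigenvalue $1$ on $H^{1,1}(E^n_\C)\simeq H^{1,0}\otimes\ol{H^{1,0}}$, whose eigenvalues are exactly the products $x_i\ol{x_j}$. Your approach instead analyzes the geometry of the invariant fibration, invoking Ueno's theorem and Kobayashi--Ochiai to show that every $f^m$-invariant proper subvariety is a translate of an abelian subvariety, then assembling these fibers into a quotient $E^n\to E^n/B$ onto an elliptic curve.

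What your approach buys is a genuinely stronger conclusion: you only ever use the hypothesis in the form ``no $x_i$ is a root of unity'' (equivalently $x_i\ol{x_i}\neq 1$), never the mixed conditions $x_i\ol{x_j}\neq 1$ for $i\neq j$. So your argument actually proves that $f$ has a Zariski dense orbit under this weaker assumption---for instance, it applies to Salem matrices in $\SL(4,\Z)$, where two eigenvalues lie on the unit circle and the paper's hypothesis fails. The price is a much longer proof with several auxiliary ingredients (Ueno, Kobayashi--Ochiai, rigidity of abelian subvarieties in families), whereas the paper's Hodge-theoretic computation is immediate and uses the full hypothesis in the most transparent way. A couple of your steps (that $\phi$ extends to a morphism, and that the difference map $v\mapsto B_v$ is constant) could be stated more carefully, but they are standard and pose no real difficulty.
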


\begin{proof}
	Assume to the contrary that $f : E^n \to E^n$ has no Zariski dense $\Qbar$-orbits.
	Then by~\cite[Theorem 1.2]{GS17}, there exists a dominant rational map $p : E^n \dto \P^1$ such that $p \circ f = p$.
	Such $p$ is defined by a linear system $\Lambda$ on $E^n$ whose base locus has codimension at least $2$.
	Since $p \circ f = p$, we have $f^*D = D$ for every $D \in \Lambda$, which implies that the action $f^* : H^{1,1}(E_\C^n) \cto$ has an eigenvalue $\gl = 1$.
	Since $f^* : H^{1,0}(E_\C^n) \cto$ coincides with the automorphism $f : \C^n \cto$ defined by $f \in \SL(n,\Z)$,
	the eigenvalues of
$$f^* : H^{1,1}(E_\C^n) \simeq H^{1,0}(E_\C^n) \otimes \ol{H^{1,0}(E_\C^n)} \cto$$
are $x_i\ol{x_j} $ with $i,j \in \{1,\ldots,n\}$. Thus $x_i\ol{x_j} = 1$ for some $i$ and $j$.
\end{proof}

We will need the notion of {\it Pisot unit} in our construction.
We call a real algebraic integer $a >1$ a Pisot number if its Galois conjugates except $a$ are in the open unit disk $\Set{z \in \C | {|z|} <1}$. A Pisot unit is a Pisot number which is invertible in the ring of algebraic integers. See e.g. \cite[Section 3]{Og19} for summary what we need and \cite{BDGPS92} for details.

Let $m$ be an integer such that $m \ge 2$ and let $M_{m} \in \SL(m, \Z)$
such that the characteristic polynomial $\Phi_{M_m}(T)$ of $M_m$ is
the minimal polynomial of a Pisot number $a_m >1$ of degree $m$.
Such $a_m$ and $M_m$ exist;
later we may and will even assume that $a_m$ is a Pisot unit in $\Q(\sqrt[m]{2})$ or $\Q(\sqrt[m]{3})$,
see~\cite[Theorem 3.3]{Og19}.
Let $f_m \in \Aut (E^m)$ be the automorphism defined by $M_{m}$.

\begin{pro}\label{0d}
	We have $\delta_{f_m} = a_m^2 >1$ and $\cZ(f_m) \ne \emptyset$.
\end{pro}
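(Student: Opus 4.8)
The plan is to compute $\delta_{f_m}$ directly from the action of $f_m$ on cohomology and then to apply Lemma~\ref{lem-densEn} to obtain a Zariski dense orbit. For the dynamical degree, recall that for an automorphism $g$ of a smooth projective variety $Y$, $\delta_g$ equals the spectral radius of $g^*$ acting on $H^{1,1}(Y_\C)$ (by Dinh--Sibony, or in the projective case by Gromov--Yomdin-type arguments). For $Y = E^m$ we have the Hodge decomposition and the isomorphism $H^{1,1}(E_\C^m) \simeq H^{1,0}(E_\C^m) \otimes \overline{H^{1,0}(E_\C^m)}$ of $f_m^*$-modules, exactly as used in the proof of Lemma~\ref{lem-densEn}. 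Since $f_m^*$ on $H^{1,0}(E_\C^m) \simeq \C^m$ is given by the matrix $M_m$ (or its transpose, which has the same eigenvalues), its eigenvalues are the roots $x_1, \dots, x_m$ of the minimal polynomial $\Phi_{M_m}(T)$ of the Pisot number $a_m$. Therefore the eigenvalues of $f_m^*$ on $H^{1,1}(E_\C^m)$ are the products $x_i \overline{x_j}$, and
$$\delta_{f_m} = \max_{i,j} |x_i \overline{x_j}| = \left(\max_i |x_i|\right)^2 = a_m^2,$$
where the last equality uses that $a_m > 1$ is the unique root of $\Phi_{M_m}$ of absolute value $> 1$ (being a Pisot number) and that all other roots lie strictly inside the unit disk, so the largest $|x_i|$ is $a_m$ itself. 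In particular $\delta_{f_m} = a_m^2 > 1$.

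For the existence of a Zariski dense $\Qbar$-orbit, I would verify the hypothesis of Lemma~\ref{lem-densEn}: namely that $x_i \overline{x_j} \ne 1$ for all $i, j$. Since $|x_i| = a_m > 1$ if $x_i = a_m$ and $|x_i| < 1$ otherwise, the product $x_i \overline{x_j}$ can have absolute value $1$ only if one of $x_i, x_j$ equals $a_m$ (absolute value $> 1$) and the other has absolute value $1$ — but no root has absolute value exactly $1$ — or if both have absolute value strictly less than $1$, in which case $|x_i \overline{x_j}| < 1 \ne 1$. Hence $|x_i \overline{x_j}| \ne 1$ for all $i,j$, and a fortiori $x_i \overline{x_j} \ne 1$. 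Lemma~\ref{lem-densEn} then gives $\cZ(f_m) \ne \emptyset$.

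The only genuinely delicate point is the identification $\delta_{f_m} = \rho(f_m^* \curvearrowright H^{1,1})$ and the Hodge-theoretic computation of that spectral radius; everything else is bookkeeping about absolute values of roots of a Pisot polynomial. I would either cite the standard fact that for automorphisms of smooth projective varieties the first dynamical degree is the spectral radius on $H^{1,1}$ (e.g.~\cite{DS05} together with the projective case), or, since $E^m$ is an abelian variety, compute $((f_m^n)^* H \cdot H^{m-1})$ directly: writing $H$ in terms of the basis of $H^{1,1}$ coming from $H^{1,0} \otimes \overline{H^{1,0}}$ and using that intersection numbers on $E^m$ are computed by the obvious pairing, one sees the growth rate is governed by the top eigenvalue $a_m^2$ of $f_m^*$ on $H^{1,1}$, again because $a_m$ is the dominant eigenvalue of $M_m$. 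This makes the argument self-contained over $\Qbar$ in the spirit of Remark~\ref{remfield}.
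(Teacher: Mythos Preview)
Your computation of $\delta_{f_m}=a_m^2$ via the spectral radius on $H^{1,1}(E^m_\C)\simeq H^{1,0}\otimes\overline{H^{1,0}}$ is fine, and is essentially what lies behind the reference the paper cites.

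The argument for $\cZ(f_m)\ne\emptyset$, however, has a real gap. Your case analysis for $|x_i\overline{x_j}|=1$ is incorrect: if one factor is $a_m$, then $|a_m\overline{x_j}|=1$ forces $|x_j|=1/a_m$, not $|x_j|=1$. Nothing in the Pisot condition prevents some conjugate from having absolute value exactly $1/a_m$. Worse, for $m=2$ this actually happens: since $M_m\in\SL(m,\Z)$ we have $\Phi_{M_m}(0)=(-1)^m$ and hence the product of the roots equals $1$. For $m=2$ the two roots are therefore $a_2$ and $1/a_2$, both real, and $a_2\cdot\overline{(1/a_2)}=1$. So the hypothesis of Lemma~\ref{lem-densEn} genuinely fails at $m=2$, and your proof cannot be repaired by tightening the absolute-value bookkeeping alone.

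The paper avoids this by splitting into two cases. For $m=2$ it invokes Xie's theorem that a birational surface map with $\delta_f>1$ has a Zariski dense orbit. For $m\ge 3$ it does verify the hypothesis of Lemma~\ref{lem-densEn}, but via an algebraic argument rather than absolute values: if $a_m\overline{x_i}=1$ then (as $a_m$ is real) $x_i=1/a_m$, and dividing the product of all roots (which is $\pm 1$) by $a_m x_i=1$ leaves $x_1\cdots x_{m-2}\in\Z\setminus\{0\}$; but each remaining $|x_k|<1$ and $m-2\ge 1$, a contradiction. You could salvage your approach for $m\ge 3$ by inserting exactly this step, but you still need a separate argument for $m=2$.
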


\begin{proof} For the first statement, see~\cite[Theorem 4.2]{Og19}.
	Assume that $m = 2$. Then since $\delta_{f_m} >1$, we have $\cZ(f_m) \ne \emptyset$ 
	by a result of Xie~\cite[Theorem 1.3]{Xi15} cited in Introduction.
	Now assume that $m \ge 3$.
	Let $x_1,\ldots,x_{m-1},a_m$ be the roots of $\Phi_{M_m}(T)$.
	By construction we have $a_m^2 > 1$ and $|x_ix_j| <1$ for all $i$ and $j$.
	If $a_m\ol{x_i} = 1$ for some $i$ (e.g. $i = m-1$) then
	$$x_1 \cdots x_{m-2}= \Phi_{M_m}(0) \in \Z$$
	which is impossible because $0<|x_i| < 1$.
\end{proof}

Let $n \ge 0$ be an integer. If $n \ge 2$, we define $h_n \in \Aut (E^n)$ by a matrix
$N_{n} \in {\rm SL}(n, \Z)$ such that the characteristic polynomial $\Phi_{N_n}(T)$ of $N_n$ is the minimal polynomial of a Pisot unit $b_n >1$ of degree $n$ in $\Q(\sqrt[n]{3})$.
As before, such $b_n$, $N_n$ and $h_n$ exist and $\delta_{h_n} = b_n^2 >1$.
If $n=1$, we choose $h_1$ to be a translation of $E$ of infinite order.
We put $\Phi_{N_1}(T) := T-1$ and $b_1 \cnec 1$. Note that $\delta_{h_1} = 1$.
If $n=0$, we set $h_0$ to be the identity of the point,
	$\Phi_{N_0}(T) \cnec 1$ and $b_0 \cnec 1$.

From now on, we assume that $a_m$ is a Pisot unit in $\Q(\sqrt[m]{2})$.
Set
$$B_{m,n} := E^m \times E^n\,\, ,\,\, \phi_{m,n} := f_m \times h_n \in \Aut(B_{m,n}).$$
By $m \ge 2$, we have
$$\delta_{\phi_{m,n}} = \max(\delta_{f_m},\delta_{h_n}) = \max (a_m^2, b_n^2) > 1.$$

\begin{lem}\label{c_ab}
We have $\cZ(\phi_{m,n}) \ne \emptyset$.
\end{lem}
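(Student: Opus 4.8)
The plan is to argue by contradiction, using that the Zariski Dense Orbit conjecture (Conjecture~\ref{ZDOconj}) holds for abelian varieties by~\cite{GS17}. If $n=0$ then $\phi_{m,n}=f_m$ and $\cZ(f_m)\ne\emptyset$ by Proposition~\ref{0d}, so I assume $n\ge 1$ from now on. Suppose $\cZ(\phi_{m,n})=\emptyset$. Since $m\ge 2$ we have $\delta_{\phi_{m,n}}=\max(a_m^2,b_n^2)>1$, so $\phi_{m,n}$ has infinite order, and~\cite{GS17} applied to the abelian variety $B_{m,n}=E^m\times E^n$ produces a non-constant rational function $g$ on $B_{m,n}$ with $g\circ\phi_{m,n}=g$. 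The goal is to derive a contradiction from the arithmetic nature of $a_m$, $b_n$, and the translation $h_1$.

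Next I would pass from $g$ to a linear-algebra statement. The function $g$ is defined by a pencil on $B_{m,n}$; let $\mathcal L$ be the associated line bundle, so $h^0(\mathcal L)\ge 2$, and since $g\circ\phi_{m,n}=g$ forces $\phi_{m,n}$ to carry each fibre $g^{-1}(t)$ into itself, we get $\phi_{m,n}^*\mathcal L\cong\mathcal L$. Effective divisors on an abelian variety are nef, and $h^0(\mathcal L)\ge 2$ rules out $\mathcal L\in\Pic^0(B_{m,n})$, so $c_1(\mathcal L)$ is a nef, non-zero class. Writing $\phi_{m,n}=t\circ\psi$ with $t$ a translation and $\psi$ a group homomorphism, translations act trivially on cohomology, hence $\psi^*c_1(\mathcal L)=c_1(\mathcal L)$; therefore $\psi$ preserves the (positive semi-definite, non-zero) Hermitian form $H$ on $V:=\mathrm{Lie}(B_{m,n})\cong\C^{m+n}$ attached to the class of $\mathcal L$ via the description of line bundles on abelian varieties by Hermitian forms. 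Then $\ker H\subsetneq V$ is $\psi$-invariant, and $\psi$ acts on the non-zero quotient $V/\ker H$ unitarily for the positive definite form induced by $H$; in particular $\psi$ has an eigenvalue of absolute value $1$ on $V$.

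Now I would compute these eigenvalues. On $V=\C^m\oplus\C^n$ the map $\psi$ is $M_m\oplus N_n$ if $n\ge 2$, and $M_m\oplus\mathrm{id}_{\C}$ if $n=1$ (as $h_1$ is a translation, with trivial linear part). Its eigenvalues are the roots of $\Phi_{M_m}(T)$ together with those of $\Phi_{N_n}(T)$ when $n\ge 2$, and the roots of $\Phi_{M_m}(T)$ together with $1$ when $n=1$. Since $a_m,b_n$ are Pisot numbers, $a_m,b_n>1$ while every other root of $\Phi_{M_m}$ or of $\Phi_{N_n}$ lies in the open unit disc; hence the \emph{only} possible eigenvalue of $\psi$ of absolute value $1$ is the eigenvalue $1$ that occurs when $n=1$. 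For $n\ge 2$ this already contradicts the previous paragraph, so $\cZ(\phi_{m,n})\ne\emptyset$.

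It remains to handle $n=1$, which I expect to be the only genuinely delicate point, because the elliptic-curve direction contributes a spurious eigenvalue $1$ to $\psi$. There the $1$-eigenspace of $\psi$ is exactly $\mathrm{Lie}(\{0\}^m\times E)$, and $\psi$ acts on $\mathrm{Lie}(E^m\times\{0\})$ with all eigenvalues of absolute value $\ne 1$; so the unitary quotient $V/\ker H$ must be this eigenspace, forcing $\ker H=\mathrm{Lie}(E^m\times\{0\})$. Hence $c_1(\mathcal L)$ is pulled back from an ample class on $E$ along $\pr_2:B_{m,1}=E^m\times E\to E$, and since $h^0(\mathcal L)\ge 2$ excludes a horizontal $\Pic^0$-contribution, $\mathcal L\cong\pr_2^*\mathcal M$ for some ample $\mathcal M$ on $E$; consequently $g=g_E\circ\pr_2$ for a non-constant $g_E\in\Qbar(E)$. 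But then $g\circ\phi_{m,1}=g$ reads $g_E(y+s)=g_E(y)$, where $h_1$ is translation by a non-torsion point $s\in E$ (Lemma~\ref{lem-ell}); thus $g_E$ is a non-constant rational function on $E$ fixed by translation by a non-torsion point, which is impossible because such a translation has Zariski dense orbits. This contradiction completes the proof. The crux is the second paragraph — turning an invariant rational function into an invariant positive semi-definite Hermitian form on $\mathrm{Lie}(B_{m,n})$ — and the only case needing an extra idea (the non-torsion-ness of $s$) is $n=1$.
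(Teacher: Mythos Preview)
Your proof is correct but follows a genuinely different route from the paper's. The paper argues directly: it picks $x\in\cZ(f_m)$ and $y\in\cZ(h_n)$, takes an irreducible component $W$ of the Zariski closure of $\Orb_{\phi_{m,n}}(x,y)$, observes (via Ueno's theorem and Theorem~\ref{thmNZ}) that $W$ must be a translate of an abelian subvariety, and then uses that both projections $W\to E^m$ and $W\to E^n$ are surjective to see that $\Phi_{M_m}$ and $\Phi_{N_n}$ both divide the characteristic polynomial of $\phi_{m,n}^*$ on $H^{1,0}(W_\C)$. Since these two polynomials are coprime (this is where the specific choice $a_m\in\Q(\sqrt[m]{2})$, $b_n\in\Q(\sqrt[n]{3})$ is used, to guarantee $a_m\ne b_n$), one gets $\dim H^{1,0}(W_\C)\ge m+n$, hence $W=B_{m,n}$.

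Your approach instead invokes~\cite{GS17} to produce an invariant rational function, converts it into a $\psi$-invariant nonzero positive semi-definite Hermitian form, and extracts an eigenvalue of $\psi$ on $\mathrm{Lie}(B_{m,n})$ of absolute value $1$. For $n\ge 2$ this is immediately contradicted by the Pisot property alone, so you do not need the coprimality of $\Phi_{M_m}$ and $\Phi_{N_n}$; your argument would go through even if one had chosen $a_m=b_n$. The price is that $n=1$ needs a separate treatment (identifying $\ker H$ with $\mathrm{Lie}(E^m\times\{0\})$ and descending the pencil to $E$), whereas the paper's $H^{1,0}$ argument handles all $n$ uniformly: for $n=1$ it only needs $\Phi_{M_m}(1)\ne 0$, which is automatic.
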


\begin{proof}

	Let $x \in E^m$ and $y \in E^n$
	such that $\Orb_{f_m}(x)$ and $\Orb_{h_n}(y)$ are Zariski dense in $E^m$ and $E^n$ respectively.
	Set $B := B_{m, n}$ and $\phi := f_m \times h_n$.
	Let $W \subset B$ be any irreducible component of the Zariski closure of $\Orb_{f_m \times h_n}((x,y))$.
	Up to replacing $\phi$ by some positive power of it,
	we can assume that $\phi (W) = W$.
	In particular, for the same reason as in the proof of Proposition~\ref{pro-albsurj}, $\sZ(\phi|_W) \ne \emptyset$ under this replacement and therefore $W$ is a translation of an abelian subvariety of the abelian variety $B = E^m \times E^n$.
	Note that the projections ${\rm pr}_1 : W \to E^m$ and ${\rm pr}_2 : W \to E^n$ are surjective, as both are equivariant under $\phi$, $f_m$ and $h_n$ and $\sZ(\phi|_W) \not= \emptyset$.
	Hence
	$${\rm pr}_1^{*} : H^{1,0}(E^m_{\C}) \to H^{1,0}(W_{\C})\,\, ,\,\, {\rm pr}_2^{*} : H^{1,0}(E^n_{\C}) \to H^{1,0}(W_{\C})$$
	are both injective and equivariant under $\phi$, $f_m$ and $h_n$.
	
	Note that the characteristic polynomial of $f_m^*|H^{1,0}(E^m_{\C})$ (resp. $h_n^*|H^{1,0}(E^n_{\C})$) is $\Phi_{M_m}(T)$ (resp. $\Phi_{N_n}(T)$). 
	Thus both $\Phi_{M_m}(T)$ and $\Phi_{N_n}(T)$ divide the characteristic polynomial $\Phi(T)$ of $\phi^*|H^{1,0}(W_{\C})$. Since $\Phi_{M_m}(T)$ and $\Phi_{N_n}(T)$ are 
	the minimal polynomials of $a_m$ and $b_n$ respectively, 
	and different Pisot numbers are not conjugate to each other,
	$\Phi_{M_m}(T)$ and $\Phi_{N_n}(T)$ are coprime in $\C[T]$.
	It follows that $\Phi(T)$ is divided by $\Phi_{M_m}(T)\Phi_{M_n}(T)$. 
	In particular, 
	$$\dim H^{1,0}(W_{\C}) \ge m+n = \dim H^{1,0}(B_{\C}).$$ 
	Since $W$ is a translation of an abelian subvariety of an abelian variety $B$, 
	it follows that $W = B$. This completes the proof.
 \end{proof}

\begin{proposition}\label{00}
There exist a  variety $Y$ with $(\kappa,q)=(0,0)$  of dimension $d \ge 2$ and $g \in \Aut(Y)$ with $\gd_g > 1$ and $\sZ(g) \ne \emptyset$.
\end{proposition}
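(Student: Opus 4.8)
The plan is to take $Y$ to be a Kummer-type quotient of a power of an elliptic curve. Fix an elliptic curve $E$ over $\Qbar$, set $A \cnec E^d$ and $\iota \cnec -\Id_A \in \Aut(A)$, and let $f_d \in \Aut(A)$ be the automorphism furnished by Proposition~\ref{0d}, so that $\delta_{f_d} = a_d^2 > 1$ and $\cZ(f_d) \ne \emptyset$. Since $f_d$ is a group automorphism (it is induced by $M_d \in \SL(d,\Z)$), it commutes with $\iota$ and hence descends to $\ol f \in \Aut(A/\iota)$. Choose a resolution of singularities $\pi\colon Y \to A/\iota$ that is an isomorphism over the smooth locus of $A/\iota$ and equivariant for the action of $\ol f$ (such a resolution exists by functoriality of resolution of singularities in characteristic $0$); then $Y$ is a smooth projective variety over $\Qbar$ of dimension $d$ and $\ol f$ lifts to some $g \in \Aut(Y)$. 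I claim $(Y,g)$ has the required properties.

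First I would check that $(\kappa(Y),q(Y)) = (0,0)$. The singularities of $A/\iota$ are of type $\C^d/\{\pm 1\}$, and since $\iota$ fixes only the $2$-torsion $A[2]$, which has codimension $d \ge 2$, there are no quasi-reflections, so the Reid--Tai criterion applies: the age of $\iota$ equals $d/2 \ge 1$, whence $A/\iota$ has canonical singularities. Therefore $H^0(Y,mK_Y) = H^0(A/\iota,\omega_{A/\iota}^{[m]})$ for every $m$, and since $\omega_{A/\iota}^{[2]} \cong \sO_{A/\iota}$ (it is the descent of the $\iota$-invariant form $(dz_1\wedge\cdots\wedge dz_d)^{\otimes 2}$ on $A$), we get $\kappa(Y) = \kappa(A/\iota) = 0$. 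For the irregularity, quotient singularities are rational, so $H^1(Y,\sO_Y) = H^1(A/\iota,\sO_{A/\iota}) = H^1(A,\sO_A)^{\iota}$, and this vanishes because $\iota$ acts by $-1$ on $H^1(A,\sO_A)$; hence $q(Y) = 0$.

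Next, $\delta_g > 1$ and $\cZ(g) \ne \emptyset$. The quotient map $q\colon A \to A/\iota$ and the resolution $\pi$ yield a dominant, generically finite (of degree $2$) rational map $\pi^{-1}\circ q\colon A \dto Y$ satisfying $(\pi^{-1}\circ q)\circ f_d = g\circ(\pi^{-1}\circ q)$; since the first dynamical degree is invariant under dominant generically finite rational maps (cf.~\cite[Corollaire 7]{DS05},~\cite{DNT12}), $\delta_g = \delta_{f_d} = a_d^2 > 1$. For the orbit, pick $x \in \cZ(f_d) \subset A(\Qbar)$. A Zariski dense orbit is infinite while $A[2]$ is finite and $f_d$-stable, so $x$ and all its iterates lie outside $A[2]$, i.e. outside the fixed locus of $\iota$; hence $q(\Orb_{f_d}(x))$ lies in the smooth locus $U \subset A/\iota$, over which $\pi$ restricts to an isomorphism $\pi^{-1}(U)\xto{\sim}U$. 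Put $\ti{x} \cnec \pi^{-1}(q(x)) \in Y(\Qbar)$; then $\Orb_g(\ti{x}) = \pi^{-1}(q(\Orb_{f_d}(x)))$, and since $q(\Orb_{f_d}(x))$ is Zariski dense in $A/\iota$, its Zariski closure in $Y$ contains the dense open subset $\pi^{-1}(U)$, so $\ti{x} \in \cZ(g)$ (and $\ti{x} \in Y_g(\Qbar)$ automatically, $g$ being an automorphism).

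The step I expect to require the most care is the identification $\kappa(Y) = 0$: one must verify that $A/\iota$ has at worst canonical singularities, so that pluricanonical sections extend unchanged to the resolution $Y$; granting this, the remaining assertions are a routine assembly of standard facts about abelian varieties, their $\{\pm 1\}$-quotients, and dynamical degrees.
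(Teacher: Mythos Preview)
Your argument is correct and follows essentially the same idea as the paper: both take the Kummer-type quotient $E^d/\{\pm 1\}$ and descend $f_d$. The only difference is that the paper produces the smooth model explicitly as $\wt{E^d}/\iota$ (blow up the $2$-torsion first, then quotient), which makes smoothness immediate and lets one read off $(\kappa,q)=(0,0)$ directly, whereas you invoke an abstract equivariant resolution together with the Reid--Tai criterion and rationality of quotient singularities to reach the same conclusion; both routes are valid and yield birational models of the same variety.
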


\begin{proof}
Let $\wt{E^m}$ be the blow up of $E^m$ at the subset of two torsion points $E^m[2]$ of $E^m$. 
Let $\iota_m$ be the automorphism of $\wt{E^m}$ induced by $-\Id_{E^m}$. 
Let $Y_m := \wt{E^m}/\iota_m$. Then $Y_m$ is a smooth projective variety with
$$(\dim Y_m, \kappa(Y_m), q(Y_m)) = (m, 0, 0).$$
$f_m$ naturally induces a biregular automorphism of $Y_m$, which we denote by $g_m$. We have then $\delta_{g_m} = \delta_{f_m} >1$ by Lemma~\ref{lem32}.
Finally, if we choose $x \in \sZ(f_m) \setminus E^m[2]$,
then its image $y \in Y_m$ belongs to $\sZ(g_m)$.
\end{proof}

\begin{proposition}\label{c_cyab} Assume that $d \ge 3$.
For any  $1 \le n \le d-2$, there exist a $d$-dimensional variety
  $Z_{n}$  with  $(\kappa, q) = (0, n)$ and $\varphi_{d-n,n} \in \Aut(Z_{n})$ with $\gd_{\varphi_{d-n,n} } > 1$ such that $\sZ (\varphi_{d-n,n}) \not= \emptyset$.
\end{proposition}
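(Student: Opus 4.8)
The plan is to realize $Z_n$ and $\varphi_{d-n,n}$ as the product of the $(\kappa,q)=(0,0)$ example of Proposition~\ref{00} with an abelian factor carrying $h_n$. Set $m \cnec d-n$; since $n\le d-2$ we have $m\ge 2$, so the automorphism $f_m\in\Aut(E^m)$ and its descent $g_m\in\Aut(Y_m)$, where $Y_m=\widetilde{E^m}/\iota_m$, are available from Proposition~\ref{00}. I would take
$$Z_n \cnec Y_m\times E^n = (\widetilde{E^m}\times E^n)/(\iota_m\times\id_{E^n}), \qquad \varphi_{d-n,n} \cnec g_m\times h_n \in \Aut(Z_n).$$
Then $\dim Z_n = m+n = d$; by the K\"unneth formula and the additivity of the Kodaira dimension for products one gets $\kappa(Z_n)=\kappa(Y_m)+\kappa(E^n)=0$ and $q(Z_n)=q(Y_m)+q(E^n)=0+n=n$, as wanted. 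For the dynamical degree, the product formula for dynamical degrees (as already used for $\delta_{\phi_{m,n}}$) gives $\delta_{\varphi_{d-n,n}}=\max(\delta_{g_m},\delta_{h_n})\ge\delta_{g_m}$, and $\delta_{g_m}=\delta_{f_m}=a_m^2>1$ by Lemma~\ref{lem32} and Proposition~\ref{00} (using $m\ge 2$).

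The substantive point is the existence of a Zariski dense orbit, which does \emph{not} follow formally from $\cZ(g_m)\ne\emptyset$ and $\cZ(h_n)\ne\emptyset$; a dense orbit on a product of this type is precisely what Lemma~\ref{c_ab} establishes with a genuine argument, and my plan is to transport the dense orbit it produces through the natural equivariant maps. On $\widetilde{E^m}\times E^n$ consider the automorphism $\Psi \cnec \widetilde{f_m}\times h_n$, where $\widetilde{f_m}\in\Aut(\widetilde{E^m})$ is the lift of $f_m$ (legitimate since $f_m$ preserves $E^m[2]$); as $\widetilde{f_m}$ commutes with $\iota_m$, the map $\Psi$ commutes with $\iota_m\times\id_{E^n}$ and descends to $\varphi_{d-n,n}$ through the quotient morphism $\pi\colon\widetilde{E^m}\times E^n\to Z_n$. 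Moreover $\Psi$ is intertwined with $\phi_{m,n}=f_m\times h_n$ through the blow-down morphism $\beta\colon\widetilde{E^m}\times E^n\to E^m\times E^n$. Both $\pi$ and $\beta$ are surjective proper morphisms.

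Then I would argue as follows. By Lemma~\ref{c_ab}, choose $(x,y)\in\cZ(\phi_{m,n})$ inside $E^m\times E^n$; replacing $(x,y)$ by a suitable point of its orbit (which does not change the orbit closure) I may assume $x\notin E^m[2]$, so $\beta^{-1}(x,y)$ is a single point $(\widetilde{x},y)$. Since $\beta$ is proper and surjective, the closed set $\beta(\overline{\Orb_\Psi(\widetilde{x},y)})$ contains the dense subset $\Orb_{\phi_{m,n}}(x,y)$, hence equals $E^m\times E^n$; therefore $\overline{\Orb_\Psi(\widetilde{x},y)}$ has dimension at least $d$ and, being closed in the irreducible $d$-dimensional variety $\widetilde{E^m}\times E^n$, must be the whole space, i.e. $(\widetilde{x},y)\in\cZ(\Psi)$. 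Finally, applying Lemma~\ref{lem-dorb} to the morphism $\pi$ (so that $I(\pi)=\emptyset$), the orbit $\Orb_{\varphi_{d-n,n}}(\pi(\widetilde{x},y))=\pi(\Orb_\Psi(\widetilde{x},y))$ is Zariski dense in $Z_n$, so $\cZ(\varphi_{d-n,n})\ne\emptyset$.

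I expect the only delicate step to be this density transport --- the passage through $\beta$ via properness together with the dimension count, and the passage through $\pi$ via Lemma~\ref{lem-dorb} --- together with the point that one genuinely has to route through Lemma~\ref{c_ab} rather than multiply dense orbits factor by factor. The numerical invariants and the dynamical-degree computation are routine.
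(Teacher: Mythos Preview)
Your proposal is correct and follows essentially the same route as the paper: one takes $Z_n = Y_m \times E^n$ with $\varphi_{d-n,n} = g_m \times h_n$, computes the invariants and dynamical degree as you do, obtains a Zariski dense $\phi_{m,n}$-orbit on $E^m \times E^n$ via Lemma~\ref{c_ab}, and then transports density to $Z_n$ through the equivariant dominant map. The paper phrases the transport in one line (using the rational map $E^m \times E^n \dashrightarrow Y_m \times E^n$ directly) rather than factoring through $\widetilde{E^m}\times E^n$, and it invokes the \emph{proof} of Lemma~\ref{c_ab} to know that a specific $(x,v)$ with $x\in\cZ(f_m)\setminus E^m[2]$ lies in $\cZ(\phi_{m,n})$, whereas you use only the statement and then adjust---both variants are fine.
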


\begin{proof}
We set $m:=d-n \ge 2$ and
$$Z_{n}: = Y_{m} \times E^n\,\, ,\,\, \varphi_{m,n} := g_{m} \times h_n \in \Aut(Z_{n}).$$
We have
$$\delta_{\varphi_{m,n}} = \max (\delta_{g_{m}}, \delta_{h_n}) = \max (\delta_{f_{m}}, \delta_{h_n}) > 1,$$
$$(\dim Z_{n}, \kappa(Z_{n}), q(Z_{n})) = (d, 0, n).$$

 Choose $x \in E^m$ and $y \in Y_m$ as in the proof of Proposition~\ref{00}
 and choose $v \in E^n$ such that ${\rm Orb}_{h_n}(v)$ is Zariski dense in $E^n$.
Then, $(x, v) \in \sZ(\phi_{m, n})$ on $E^m \times E^n$. 
We deduce from this that $(y, v) \in \sZ(\varphi_{m, n})$ on $Y_m \times E^n$. 
\end{proof}

It remains to prove Theorem~\ref{mainthm3} (2) when $\kappa (X) = -\infty$.

\begin{proposition}\label{cyproj} Let $d \ge 2$.
For any $0 \le n \le d-1$, there exist a $d$-dimensional variety $V_{n}$  with $(\kappa,q)=(-\infty,n)$
and $\phi \in \Aut(V_{n})$ such that $\sZ(\phi) \ne \emptyset$. 
If $(d, \kappa, q) \ne (2,-\infty,1)$, 
	there exists such $\phi$ with $\gd_\phi > 1$.
\end{proposition}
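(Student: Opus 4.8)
The plan is to prove Proposition~\ref{cyproj} by writing down, for each $n$ with $0\le n\le d-1$, an explicit product $V_n$ carrying a product automorphism $\phi$, and then to establish the Zariski density of one carefully chosen orbit by excluding the proper $\phi$-invariant subvarieties that could contain it. \textbf{Construction.} If $n\le d-2$ I would set $V_n\cnec S\times\P^{\,d-n-2}\times E^n$, where $S$ is a rational surface carrying an automorphism $g_S$ of positive entropy (such $S$ exist, by the constructions of McMullen and of Bedford--Kim), and take $\phi\cnec g_S\times\psi\times w_n$ with $\psi\in\PGL_{d-n-1}$ of infinite order and $w_n\in\Aut(E^n)$ equal to the Pisot automorphism $f_n$ of Proposition~\ref{0d} when $n\ge 2$ and to a non-torsion translation when $n\in\{0,1\}$ (for $n=0$ the last factor is absent, for $d=2$ also the middle one). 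If $n=d-1$ I would set $V_{d-1}\cnec\P^1\times E^{d-1}$ and $\phi\cnec\tau_c\times w_{d-1}$, where $\tau_c:[u:v]\mapsto[u:v+cu]$ and $w_{d-1}=f_{d-1}$ when $d\ge 3$ while $w_1$ is a non-torsion translation when $d=2$. In all cases $(\gk(V_n),q(V_n))=(-\infty,n)$ (a product with a projective space or with $S$ is uniruled, and the irregularity is carried by $E^n$), and $\gd_\phi=\max(\gd_{g_S},\gd_{w_n})$ (resp. $\max(1,\gd_{w_{d-1}})$) is $>1$ by Proposition~\ref{0d} and the positivity of entropy of $g_S$, except precisely when $(d,\gk,q)=(2,-\infty,1)$, the single exception allowed by the statement.

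\textbf{Zariski density.} I would fix $x$ with $\Orb_{g_S}(x)$ (or $\Orb_{\tau_c}$ on $\P^1$) Zariski dense, $a\in E^n$ with $\Orb_{w_n}(a)$ Zariski dense (using Xie's theorem~\cite[Theorem 1.3]{Xi15} and Proposition~\ref{0d}, resp. non-torsion-ness), and a generic point of $\P^{\,d-n-2}$. Let $W$ be an irreducible component of the Zariski closure of the $\phi$-orbit of $(x,\dots,a)$. Replacing $\phi$ by a power (allowed by Lemma~\ref{lem-itZ}) I may assume $\phi(W)=W$ and, as in the proof of Proposition~\ref{pro-albsurj}, that $\sZ(\phi|_W)\ne\emptyset$. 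Since the coordinate orbits are dense, $W$ surjects onto every factor; in particular $W\to E^n$ is surjective, so a resolution $\wt W$ has surjective Albanese by Proposition~\ref{pro-albsurj} (hence $q(\wt W)\ge n$), and $\gk(\wt W)\le 0$ by Theorem~\ref{thmNZ}. If $\wt W\to E^n$ is generically finite, then $\gk(\wt W)\ge 0$ and $\dim\wt W=n\ge q(\wt W)$, so $\wt W$ is birational to an abelian variety; the $\phi$-equivariant dominant rational map $\wt W\dashrightarrow\P^{\,d-n}$ then, after Stein factorization, factors through a curve $C$ on which $\phi$ acts equivariantly over a $\PGL$-element, and $C$ can be neither of genus $\ge 2$ (an abelian variety has no surjection to a curve of general type) nor elliptic (the equivariance would force a non-constant rational function $h$ on $C$ with $h(y+\sigma)-h(y)$ constant for a non-torsion $\sigma$, impossible since the pole divisor of such $h$ would be translation-invariant), whence the image is a point and dimension-counting forces $W=V_n$, exactly as in the proof of Proposition~\ref{pro-NZD} and Remark~\ref{rem-ZODqd-1}. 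If instead $\wt W\to E^n$ has positive-dimensional fibres, I would run the argument of Lemma~\ref{c_ab}: $\Phi_{N_n}(T)$ divides the characteristic polynomial of $\phi^*$ on $H^{1,0}(\wt W_\C)$, while the surjections of $W$ onto $S$ and $\P^{\,d-n-2}$ together with the relative dynamical degree identities of Lemma~\ref{lem32} force $\dim W$ up to $\dim V_n$. In every case $W=V_n$, so $\sZ(\phi)\ne\emptyset$.

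\textbf{Main obstacle.} The delicate step is the density argument — excluding ``diagonal'' $\phi$-invariant proper subvarieties $W$ through the chosen orbit — since $\phi$ is a product automorphism and a priori such $W$ could mix the factors. The three ingredients that I expect to make it work are: (i) the Kodaira-dimension and Albanese-surjectivity constraints on $W$ (Theorem~\ref{thmNZ}, Proposition~\ref{pro-albsurj}), which force $W$ to be essentially abelian over the $E^n$-direction; (ii) the elementary fact that a non-torsion translation of an elliptic curve has no non-constant rational ``cohomological primitive'', which eliminates the $\P^1$-direction; and (iii) the coprimality of the Pisot minimal polynomials, as in Lemma~\ref{c_ab}, which eliminates mixing inside the abelian directions. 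A secondary nuisance will be the repeated bookkeeping of relative dynamical degrees under the various equivariant fibrations, handled by Lemma~\ref{lem32} and the product formula.
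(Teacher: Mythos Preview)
Your construction differs from the paper's and, more importantly, your density argument has a genuine gap. The paper does \emph{not} use a rational surface $S$ with positive entropy nor an intermediate $\P^{d-n-2}$ factor; instead it takes $V_n=\P^1\times V$ with $\gk(V)=0$ (using the varieties $E^{d-1}$, $Y_{d-1}$, $Z_{d-1-q,q}$ already built), together with the translation $\tau(t)=t+1$ on $\P^1$. The density of an orbit of $\tau\times\rho$ is then proved via the Viehweg--Zuo isotriviality theorem~\cite[Theorem 0.2]{VZ01}: if $W\subsetneq\P^1\times V$ were a proper $\phi$-invariant component of the orbit closure, then $W\to V$ is generically finite, so $\gk(\wt W)\ge 0$, while $W\to\P^1$ is smooth away from $\infty$ because $\tau$ has \emph{only} $\infty$ as a periodic point---and this contradicts Viehweg--Zuo. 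The one value $q=d-2$ that does not fit this template (since a $(d-1)$-fold with $(\gk,q)=(0,d-2)$ is ruled out by Proposition~\ref{pro-NZD}) is handled separately by taking a rational surface arising as a resolved quotient $E^2/\langle(\tau,\tau)\rangle$, so that density is inherited from the abelian case via Lemma~\ref{c_ab}.

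Your sketch, by contrast, never invokes Viehweg--Zuo, and the substitute you propose does not close. In your ``generically finite over $E^n$'' case you correctly reach that $\wt W$ is birational to an abelian variety $A$ of dimension $n$, but the next step is incoherent: there is no $\phi$-equivariant dominant map $A\dashrightarrow\P^{d-n}$ in your set-up (the non-abelian factor is $S\times\P^{d-n-2}$, which need not even have dimension $\le n$), the ``factors through a curve $C$'' assertion is unmotivated, and in any event $\dim W=n<d$ cannot yield $W=V_n$ by ``dimension-counting''. Worse, with your choices a genuine counterexample can occur: if $S$ happens to be a Kummer-type quotient of $E^2$ with $g_S$ induced by the \emph{same} Pisot matrix you use for $w_2$, then the graph of $E^2\dashrightarrow S$ sits as a proper $\phi$-invariant surface in $S\times E^2$. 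In the ``positive-dimensional fibre'' case, Lemma~\ref{c_ab} gives nothing beyond $q(\wt W)\ge n$ because $H^{1,0}(S)=H^{1,0}(\P^{d-n-2})=0$, and Lemma~\ref{lem32} concerns dynamical degrees, not dimensions, so it cannot ``force $\dim W$ up''. The missing idea is precisely the one-periodic-point translation on $\P^1$ combined with Viehweg--Zuo.
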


\begin{proof}

Let $\tau$ be an automorphism of $\P^1$ defined by $\tau(t) = t+1$. Here $t$ is the affine coordinate of $\P^1 \setminus \{\infty\}$.

\begin{lemma}\label{c_cyproj} Let $V$ be a smooth projective variety with $\kappa (V) =0$ and let $\rho$ be an automorphism of $V$ with $\sZ(\rho) \not= \emptyset$. Then $\sZ(\tau \times \rho) \not= \emptyset$ on $\P^1 \times V$.
Moreover,
$\delta_{\tau \times \rho} >1$ if $\delta_{\rho} >1$. 
\end{lemma}

\begin{proof} Set $\phi := \tau \times \rho$. Let $p \in \P^1 \setminus \{\infty\}$ and $v \in \sZ(\rho) \subset V$.

We are going to show that $(p, v) \in \sZ(\tau \times \rho)$. As before, consider any irreducible component $W$ of the Zariski closure of ${\rm Orb}_{\phi}((p,v))$. Assume to the contrary that $W \not= \P^1 \times V$. As before we may assume that $\phi(W) = W$. Let $\nu : \tilde{W} \to W$ be an equivariant projective resolution of $W$. We denote by $\tilde{\phi} \in \Aut (\tilde{W})$ the lift of $\phi$ on $\tilde{W}$. 
For the same reason as  before,
the projections
$$p_1 := {\rm pr}_1 \circ \nu : \tilde{W} \to \P^1\,\, ,\,\, p_2 := {\rm pr}_2 \circ \nu : \tilde{W} \to V$$
are surjective as $\sZ(\phi_{|W}) \not= \emptyset$. Thus $\tilde{W} \to V$ is a generically finite morphism by $W \not= \P^1 \times V$. Since $\kappa(W) = 0$, it follows that $\kappa (\tilde{W}) \ge 0$. Since $\tau$ has no periodic point on $\P^1_{\C}$ except $\infty$, it follows that
$$p_2 : \tilde{W}_{\C} \to \P^1_{\C}$$
is a smooth morphism over $\P^1 \setminus \{\infty\}$. As $\kappa(\tilde{W}) \ge 0$, this contradicts \cite[Theorem 0.2]{VZ01}. Thus $(p, v) \in \sZ(\tau \times \rho)$ as claimed. Observe that $\delta_{\tau} = 1$ and
$$\delta_{\tau \times \rho} = \max (\delta_{\tau}, \delta_{\rho}) = \delta_{\rho}.$$
This implies the last assertion.
\end{proof}

Consider 
$$(V_n, \phi)=(\P^1 \times E^{d-1}, \tau \times h_{d-1})\,\, , \,\, (\P^1 \times Y_{d-1}, \tau \times g_{d-1})\,\, {\rm and}\,\, (\P^1 \times Z_{d-1-q, q}, \tau \times \varphi_{d-1-q, q})$$ 
with $d \ge 2$ in the first two cases, 
and $1 \le q \le d-3$ in the last one. 
We then obtain the desired examples for $(\kappa, q) = (-\infty,d-1)$, $(-\infty,0)$ and $(-\infty, q)$ with $1 \le q \le d-3$.

It remains to construct examples for $(\kappa, q) = (-\infty, d-2)$ 
with $d \ge 2$.
We now choose $E$ as the elliptic curve defined by the Weierstrass equation $y^2 = x^3-1$. $E$ has the automorphism $\tau$ of order $6$
defined by
$$\tau : (x, y) \mapsto (\omega x, -y)\,\, {\rm where}\,\, \omega := \frac{-1 + \sqrt{-3}}{2}.$$
Let  $f_2 \in \Aut (E^2)$ be defined as above.
The minimal resolution $W$ of the quotient surface $E^2/\langle (\tau, \tau) \rangle$ is a smooth projective rational surface (by the Castelnouvo-Enriques criterion) with an automorphism $f_W$ induced by $f_2$. We have $\delta_{f_W} = \delta_{f_2} > 1$ again by Lemma~\ref{lem32}. Let $(E^n, h_n)$ ($n \ge 0$) as before.

Then, by Lemma~\ref{c_ab}, there exists $(x, v) \in \sZ(f_2 \times h_{d-2})$ in $E^2 \times E^{d-2}$ such that $x$ is not in the fixed locus of $\langle (\tau, \tau) \rangle$ which is a subset of the set of $6$-torsion points of $A$.
Let $y \in W$ be the image of $x$ under $E^2 \dasharrow W$. Then we deduce that $(y, v) \in \sZ(f_W \times h_{d-2})$ from $(x, v) \in \sZ(f_2 \times h_{d-2})$. Moreover $\delta_{f_W \times h_{d-2}} >1$ as $\delta_{f_W} >1$. Thus $(V_{d-2}, \phi):=(W \times E^{d-2}, f_W \times h_{d-2})$ gives a desired example for $(\kappa, q) = (-\infty, d-2)$.
\end{proof}

Combining Propositions~\ref{0d}, \ref{00}, \ref{c_cyab}, \ref{cyproj},
Theorem~\ref{mainthm3} (2) now follows.

\begin{remark}\label{exclude}
	If $(d, \kappa, q) = (2,-\infty,1)$, then $X$ is a ruled surface over the elliptic curve $\Al(X)$ and thus $\delta_f = 1$ for all $f \in \Bir (X)$ by Lemma~\ref{lem23}.
\end{remark}

\begin{remark}\label{cyrat}
	Let $E$ be the elliptic curve defined by the Weierstrass equation $y^2 = x^3-1$ with order $6$ automorphism $\tau$ as above.
	We define $f_3 \in \Aut (E^3)$ as above.
	Let $Y_3$ be the blow up at the maximal ideal of the singular points of $E^3/ \langle (\tau^2, \tau^2, \tau^2) \rangle$ and $Z_3$ be the blow up at the maximal ideal of the singular points of $E^3/ \langle (\tau, \tau, \tau) \rangle$.
	Then $f_3$ induces automorphisms $f_{Y_3} \in \Aut (Y_3)$ and $f_{Z_3} \in \Aut (Z_3)$.
	For the same reason as above, both $f_{Y_3}$ and $f_{Z_3}$ have first dynamical degree $>1$ and Zariski dense orbit.
	Here $Y_3$ is a simply-connected Calabi-Yau threefold and $Z_3$ is a smooth rational threefold \cite{OT15}.
\end{remark}

\end{document}